\renewcommand*{\backref}[1]{}
\renewcommand*{\backrefalt}[4]{[{\tiny%
		\ifcase #1 Not cited.%
		\or Cited on page~#2.%
		\else Cited on pages #2.%
		\fi%
	}]}
\renewenvironment{proof}{{ \textbf{Proof}.}}{\qed}
\newtheorem{Thm}{Theorem}[section]
\newtheorem{Lem}[Thm]{Lemma}
\newtheorem{Def}[Thm]{Definition}
\newtheorem{Cor}[Thm]{Corollary}
\newtheorem{Prop}[Thm]{Proposition}
\newtheorem{Ex1}[Thm]{Example}
\newtheorem{Rem1}[Thm]{Remark}
\newtheorem{assumption}{Assumption}
\newcommand{\cone}{\mathrm{Cone}}
\newcommand{\Hom}{\mathrm{Hom}}
\newcommand{\add}{\mathrm{add}}
\newcommand{\per}{\mathrm{per}}
\newcommand{\Ext}{\mathrm{Ext}}
\newcommand{\End}{\mathrm{End}}
\newcommand{\obj}{\mathrm{obj}}
\newcommand{\Mod}{\mathrm{Mod}}
\newcommand{\bijar}[1][]{%
	\ar[#1]
	\ar@<0.7ex>@{}[#1]|-*[@]{\sim}}
\newcommand{\pvd}{\mathrm{pvd}}
\newcommand{\HH}{H\!H}
\newcommand{\HC}{H\!C}
\newcommand{\HN}{H\!N}
\newcommand{\bL}{\mathbf{L}}
\newcommand{\charc}{\mathrm{char}}
\newcommand{\car}{\mathrm{Card}}
\newenvironment{Rem}{\begin{Rem1}\rm}{\end{Rem1}}
\newenvironment{Ex}{\begin{Ex1}\rm}{\end{Ex1}}
\newcommand{\spa}{\mathrm{span}}
\newcommand{\Si}{\Sigma}
\newcommand{\ten}{\otimes}
\newcommand{\lten}{\overset{\mathbf{L}}{\ten}}
\newcommand{\cof}{\mathrm{cof}}
\newcommand{\RHom}{\mathrm{\mathbf{R}Hom}}
\newcommand{\ra}{\rightarrow}
\newcommand{\iso}{\xrightarrow{_\sim}}
\newcommand{\id}{\mathbf{1}}
\newcommand{\bmgamma}{\Gamma}
\newcommand{\pr}{\mathrm{pr}}
\newcommand{\relGammabf}{\bm\Gamma}
\newcommand{\overGamma}{\overline{\bmgamma}}
\newcommand{\Gr}{\mathrm{Gr}}
\renewcommand{\mod}{\mathrm{mod}}
\newcommand{\ind}{\mathrm{ind}}
\newcommand{\copr}{\mathrm{copr}}
\newcommand{\bmhom}{\mathrm{\textbf{Hom}}}
\newcommand{\ca}{{\mathcal A}}
\newcommand{\cb}{{\mathcal B}}
\newcommand{\cc}{{\mathcal C}}
\newcommand{\cd}{{\mathcal D}}
\newcommand{\ch}{{\mathcal H}}
\newcommand{\ck}{{\mathcal K}}
\newcommand{\cm}{{\mathcal M}}
\newcommand{\co}{{\mathcal O}}
\newcommand{\cp}{{\mathcal P}}
\newcommand{\ccr}{{\mathcal R}}
\newcommand{\cs}{{\mathcal S}}
\newcommand{\ct}{{\mathcal T}}
\newcommand{\cw}{{\mathcal W}}
\newcommand{\cy}{{\mathcal Y}}
\newcommand{\cz}{{\mathcal Z}}
\begin{document}
	
	\title{Group actions on relative cluster categories and Higgs categories}
	
	

	\author{Yilin WU}
	\address{
University of Luxembourg\\                           
6, avenue de la Fonte, L-4364\\
Esch-sur-Alzette\\            
Luxembourg
}
\email{wuyilinmath@gmail.com
}
	
	
	\dedicatory{}
	
	\keywords{Group action, Higgs category, skew-symmetrizable cluster algebra with coefficients}
	
	\begin{abstract}
	Let $G$ be a finite group acting on an ice quiver with potential $(Q, F, W)$. We construct the corresponding $G$-equivariant relative cluster category and  $G$-equivariant Higgs category, extending the work of Demonet. Using the orbit mutations on the set of $G$-stable cluster-tilting objects of the Higgs category and an appropriate cluster character, we can link these data to a skew-symmetrizable cluster algebra with coefficients. As a specific example, this provides an additive categorification for cluster algebras with principal coefficients in the non-simply laced case.
	\end{abstract}
	
	\maketitle
	\tableofcontents

	\section{Introduction}
	
Cluster algebras were invented by Fomin and Zelevinsky around 2001 \cite{FZ2002} in order to create a combinatorial framework for the study of canonical bases  in quantum groups and the study of total positivity in algebraic groups.  By construction, a cluster algebra is a commutative ring endowed with distinguished generators (cluster variables) grouped in subsets of the same cardinality (clusters). Since the combinatorics of cluster algebras is complicated, it is useful to model them categorically, so that more conceptual tools become available. For a cluster algebra $\ca$ defined by an acyclic quiver $Q$, Buan–Marsh–Reineke–Reiten–Todorov \cite{BMMRRT} introduced the cluster category $\cc_{Q}$. In order to generalize the representation-theoretic approach to cluster algebras from acyclic quivers
to quivers with oriented cycles, Derksen–Weyman–Zelevinsky \cite{DWZ2008,DWZ2010} extended the mutation operation from quivers to quivers with potential and their representations. In the case where the quiver with potential is Jacobi-finite, Amiot \cite{Amoit2009} generalized the construction of the cluster category \cite{BMMRRT}. The cluster character constructed by Palu in \cite{paluClusterCharacters2Calabi2008} induces a bijection \cite{CKLP2013} from the isoclasses of the reachable rigid indecomposables of the (generalized) cluster category to the cluster variables of the associated cluster algebra. Plamondon \cite{Plamondon2011} generalized Amiot’s and Palu’s constructions to arbitrary quivers with potential.

Cluster algebras with coefficients are important since they appear in
nature as coordinate algebras of varieties like Grassmannians, double Bruhat cells, unipotent cells, etc.
The work of Geiss-Leclerc-Schr\"{o}er often yields Frobenius exact categories which allow us to categorify such cluster algebras. In \cite{Wu2023}, we have generalized the construction of (higher) cluster categories by Claire Amiot and by
Lingyan Guo to the relative context. We proved the existence of an $n$-cluster tilting object in a Frobenius extriangulated category, namely the Higgs category, which is stably $n$-Calabi–Yau and Hom-finite, arising from a left $(n+1)$-Calabi--Yau morphism. Higgs categories generalize the Frobenius categories used by Geiss–Leclerc–Schr\"{o}er but are no longer exact categories in the sense of Quillen in general. They serve to categorify cluster algebras with non invertible coefficents whereas relative cluster categories serve to categorify their localizations at the coefficients.

However, the theory of Fomin–Zelevinsky accommodates more general seeds defined by skew-symmetrizable matrices. For instance, cluster algebras associated with Lie groups of types $B$, $C$, $F$, and $G$ are skew-symmetrizable, as are cluster algebras with principal coefficients of these types.

The aim of this paper is to generalize the results of Demonet \cite{LD2011} to the setting of ice quivers with potentials. By using $G$-orbit mutations on the set of $G$-stable cluster-tilting objects in the Higgs category and an appropriate cluster character, we categorify certain skew-symmetrizable cluster algebras with coefficients, including, in particular, cluster algebras with principal coefficients in the non-simply-laced case. We remark that Azzurra Ciliberti has recently provided a categorification of cluster algebras of type $B$ and $C$ with principal coefficients via symmetric quivers \cite{Ciliberti2025}.

The structure of the paper is as follows. In Section \ref{Section2}, we give some background material on dg algebras and relative Calabi-Yau completions. Let $k$ be field and $f\colon B\ra A$ a dg morphism between smooth dg algebras over $k$. Let $G$ be a finite group acting on $f$, i.e. $G$ acts on $B$ and $A$, and $f$ is $G$-equivariant. We assume that $\charc(k)\nmid\car(G)$. Let $n$ be a positive integer and $\xi$ an element of $\HH_{n-2}(f)^{G}\subseteq\HH_{n-2}(f)$. Denote by $\xi_{B}$ the element in $\HH_{n-3}(B)$ induced by $\xi$ under the map $\HH_{n-2}(f)\ra \HH_{n-3}(B)$. 

In Section \ref{Section3}, we show that the deformed relative 3-Calabi--Yau completion $$ \tilde{f}\colon\bm\Pi_{n-1}(B,\xi_{B})\ra\bm\Pi_{n}(A,B,\xi) $$ has a natural $G$-action, i.e. $G$ acts on $\bm\Pi_{n-1}(B,\xi_{B})$ and $\bm\Pi_{n}(A,B,\xi)$ and $\tilde{f}$ is $G$-equivariant. Moreover, if $\xi$ lifts to $\HN_{n-2}(f)$, then the associated dg functor between skew group dg algebras
$$\tilde{f}*G\colon\bm\Pi_{n-1}(B,\xi_{B})*G\ra\bm\Pi_{n}(A,B,\xi)*G$$ has a canonical left $n$-Calabi--Yau structure, see Theorem \ref{Thm:skew CY completion}. Let $(Q,F)$ be a graded ice quiver and $W$ a homogeneous potential on $Q$ of degree $3-n$. 
Suppose that Assumption \ref{ass1} in Section~\ref{Section 3.5} is satisfied. Then there exists a graded ice quiver with potential $(Q_{G},F_{G},W_{G})$ such that we have the following commutative diagram (Proposition \ref{Prop: Lemeur-Morita})
\[
\begin{tikzcd}
	\bm{\Pi}_{n-1}(F_{G})\arrow[r]\arrow[d]&\bmgamma_{n}(Q_{G},F_{G},W_G)\arrow[d]\\
	\bm{\Pi}_{n-1}(F)*G\arrow[r]&\bmgamma_{n}(Q,F,W)*G,
\end{tikzcd}
\]
Where $\bm{\Pi}_{n-1}(F)$ is the $(n-1)$-Calabi--Yau completion of $kF$, see Definition \ref{Def: derived preprojective algebra} and $\bmgamma_{n}(Q,F,W)$ is the $n$-dimensional relative Ginzburg dg algebra, see Definition \ref{Def:Relative Ginzburg algebra}. And
the restriction-of-scalars functors induce equivalences of triangulated categories
\[ \cd(\bmgamma_{n}(Q,F,W)*G)\simeq\cd(\bmgamma_{n}(Q_{G},F_{G},W_G)) \]
and 
\[ \cd(\bm{\Pi}_{n-1}(F)*G)\simeq\cd(\bm{\Pi}_{n-1}(F_{G})) .\]

In Section \ref{Section4:Equivariant categories}, we recall some facts on separable monads and the construction of the comparison functor associated to an adjoint pair. Let $A$ be a dg $k$-algebra and $G$ a finite group acting on $A$ by dg algebra automorphisms. Then $G$ naturally acts on $\per(A)$ and $\pvd(A)$. Let $\per(A)^G$ and $\pvd(A)^G$ be the corresponding categories of $G$-equivariant objects. Then we show the following triangle equivalences, see Proposition \ref{Prop:equ of cats}
$$ \per(A*G)\iso\per(A)^{G}\quad\pvd(A*G)\iso\pvd(A)^{G}.$$ In Section \ref{Section5}, we present the main construction of the $G$-equivariant relative cluster category and the $G$-equivariant Higgs category. Let $(Q,F)$ be an ice quiver and $W$ a potential on $Q$ of degree 0. Denote by $ \bm{\Gamma} $ the corresponding relative Ginzburg algebra, i.e. the 3-dimensional relative Ginzburg dg algebra. Let $\overline{Q}$ be the quiver obtained from $Q$ by deleting all vertices in $F$ and all arrows incident with vertices in $F$. Let $\overline{W}$ be the potential on $Q$ obtaining by deleting all cycles passing through vertices of $F$ in $W$. 

Let $\overGamma$ be the Ginzburg algebra associated with $(\overline{Q},\overline{W})$. Let $ e=\sum_{i\in F}e_{i} $ be the idempotent associated with the set of frozen vertices. Suppose that Assumptions~\ref{ass1} and~\ref{ass2} (see Sections~\ref{Section 3.5} and~\ref{Section5}, respectively) are satisfied. By our assumption, $\overGamma$ has an induced $G$-action. Then the \emph{$G$-equivariant relative cluster category} $\cc(\bmgamma*G)$ is defined as the idempotent completion of the Verdier quotient of triangulated categories $$ \mathrm{per}(\bm\Gamma*G)/\mathrm{pvd}(\overGamma*G),$$
where we view $\mathrm{pvd}(\overGamma*G)$ as a triangulated subcategory of $\per(\bmgamma*G)$ through $$\mathrm{pvd}(\overGamma*G)\simeq(\pvd_{e}(\bmgamma))^{G}\hookrightarrow\per(\bmgamma)^{G}\simeq\per(\bmgamma*G).$$
Then Proposition \ref{Prop:skew-relative-cluster} shows that the $\cc(\bmgamma*G)$ is triangle equivalent to the equivariant category $\cc(\bmgamma)^{G}=(\per\bmgamma/\pvd_{e}(\bmgamma))^{G}$. By Definition \ref{Def: Q_{G} and F_{G}} and Corollary \ref{Cor: stable derived Morita}, there exists an ice quiver with potential $(Q_{G},F_{G},W_{G})$ such that $ \cc(\bmgamma*G) $ is equivalent to the relative cluster category $\cc(\bmgamma_{G})$ associated with $(Q_{G},F_{G},W_{G})$. Then we define the \emph{G-equivariant Higgs category} to be $\ch(\bmgamma_G)$, i.e. the Higgs category associated with $(Q_{G},F_{G},W_{G})$
\[
\ch(\bmgamma_G)=\{ X \in \cc(\bmgamma_G) \mid \operatorname{Hom}_{\cc(\bmgamma)}(X, \Sigma^{>0} \mathcal{P}') = 0 = \operatorname{Hom}_{\cc(\bmgamma))}(\Sigma^{<0} \mathcal{P}', X) \},
\]
where $\cp'=\add(e_{G}\bmgamma_G)$, i.e. the additive subcategory of $\cc(\bmgamma_G)$ generated by $e_{G}\bmgamma_G=\displaystyle\oplus_{k\in F_{G}}e_{k}\bmgamma_{G}$. We show that the triangle equivalence $\cc(\bmgamma*G)\iso\cc(\bmgamma)^{G}$ induces an equivalence of Frobenius extriangulated categories
$$ \ch(\bmgamma_G)\iso\ch(\bmgamma)^{G},$$
where the category of projective-injective objects of $\ch^{G}$ is $\cp^{G}=\add(e\bmgamma)^{G}$ and $\bmgamma[G]=(\oplus_{h\in G}\,^{h}\bmgamma,\mathrm{Id})$ is a canonical cluster-tilting object of $\ch^{G}$, see Theorem \ref{Prop: skew-Higgs}.

In the final section, we construct an appropriate cluster character for the Higgs category, enabling us to connect this data to a skew-symmetrizable cluster algebra with coefficients. As a specific example, the Higgs category together with the set of $G$-stable cluster tilting objects provides an additive categorification of cluster algebras with principal coefficients in the non-simply-laced case.

\section*{Acknowledgments}


The author is very grateful to Bernhard Keller for many helpful comments and suggestions. He is also grateful to Xiao-Wu Chen and Guodong Zhou for their constant support and encouragement. He would also like to thank Peigen Cao, Xiaofa Chen, and Dong Yang for stimulating discussions. Furthermore, the author thanks Sarah Scherotzke for her support and the excellent working conditions during his postdoctoral stay at the University of Luxembourg. Finally, the author is grateful to the referees for their careful work and constructive suggestions. This work is supported by the China Postdoctoral Science Foundation (2023M733405, 2024T170871) and the Fundamental Research Funds for the Central Universities.

%
%
%

\section{Preliminaries}\label{Section2}


Let $ k $ be a field. A \emph{differential graded $ k $-algebra} (or simply dg $ k $-algebra) is a graded $ k $-algebra $ A=\bigoplus_{n\in\mathbb{Z}}A^{n} $ equipped with a $ k $-linear homogeneous map $ d_{A}:A\ra A $ of degree 1 such that $ d_{A}^{2}=0 $ and the graded Leibniz rule $ d_{A}(ab)=(d_{A}a)b+(-1)^{n}ad_{A}(b) $ holds, where $ a\in A^{n} $ and $ b\in A $. The map $ d_{A} $ is called the \emph{differential} of $ A $. We can view an ordinary $ k $-algebra as a dg $ k $-algebra concentrated in degree $ 0 $ whose differential is trivial. A graded $ k $-algebra can be viewed as a dg $ k $-algebra with the zero differential. Let $A$ be a dg $k$-algebra.

\begin{Def}\rm
	A \emph{right dg module} over $ A $ is a graded right $ A $-module $ M=\bigoplus_{n\in\mathbb{Z}}M^{n} $ equipped with a $ k $-linear homogeneous map $ d_{M}:M\ra M $ of degree 1 such that $ d_{M}^{2}=0 $ and the graded Leibniz rule $$ d_{M}(ma)=d_{M}(m)a+(-1)^{n}md_{A}(a) $$ holds for all $ m\in M^{n} $ and $ a\in A $. The map $ d_{M} $ is called the \emph{differential} of $ M $.
\end{Def}

Given two dg $ A $-modules $ M $ and $ N $, we define the \emph{morphism complex} to be the graded $ k $-vector space $ \ch om_{A}(M,N) $ whose $ i $-th component $ \ch om^{i}_{A}(M,N) $ is the subspace of the product $ \prod_{j\in\mathbb{Z}}\Hom_{k}(M^{j},N^{j+i}) $ consisting of morphisms $ f $ such that $ f(ma)=f(m)a $ for all $ m $ in $ M $ and all $ a $ in $ A $, together with the differential $ d $ given by
$$ d(f)=f\circ d_{M}-(-1)^{|f|}d_{N}\circ f $$ for a homogeneous morphism $ f $ of degree $ |f| $.

The category $ \cc(A) $ of dg $ A $-modules is the category whose objects are the right dg $ A $-modules, and whose morphisms are the 0-cycles of the morphism complexes. It is an abelian category and a Frobenius category for the conflations which are split exact as sequences of graded $ A $-modules (see \cite{kellerDerivingDGCategories1994a}). Its stable category $ \ch(A) $ is called the \emph{homotopy category} of right dg $ A $-modules. It can be equivalently defined as the category whose objects are the dg $ A $-modules and whose morphism spaces are the 0-th cohomology groups of the morphism complexes.

The homotopy category $ \ch(A) $ is a triangulated category whose suspension functor $ \Si $ is the shift of dg modules $ M\mapsto \Si M $. The \emph{derived category} $ \cd(A) $ of dg $ A $-modules is the localization of $ \ch(A) $ at the full subcategory of acyclic dg $ A $-modules. 

A dg $ A $-module $ P $ is \emph{cofibrant} if, for every surjective quasi-isomorphism $ L\ra M $, every morphism $ P\ra M $ factors through $ L $. For example, the dg algebra $ A $ considered as a right module over itself is cofibrant. A dg $ A $-module $ I $ is \emph{fibrant} if, for every injective quasi-isomorphism $ L\ra M $, every morphism $ L\ra I $ extends to $ M $.

\begin{Prop}\cite{kellerDerivingDGCategories1994a}
	\begin{itemize}
		\item[a)] For each dg $ A $-module $ M $, there is a quasi-isomorphism $ \mathbf{p}M\ra M $ with cofibrant $ \mathbf{p}M $ and a quasi-isomorphism $ M\ra \mathbf{i}M $ with fibrant $ \mathbf{i}M $.
		\item[b)] The projection functor $ \ch(A)\ra \cd(A) $ admits a fully faithful left adjoint given by $ M\mapsto\mathbf{p}M$ and a fully faithful right adjoint given by $ M\mapsto\mathbf{i}M $.
	\end{itemize}
\end{Prop}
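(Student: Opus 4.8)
The plan is to prove (a) by exhibiting concrete resolutions and then to deduce (b) formally from (a) together with a comparison of morphism groups in $\ch(A)$ and in $\cd(A)$.

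For the cofibrant replacement I would build $\mathbf{p}M$ as a \emph{semi-free} module, that is, as the union of an increasing chain $0=P_{-1}\subseteq P_{0}\subseteq P_{1}\subseteq\cdots$ of dg $A$-modules with compatible dg maps $\varphi_{i}\colon P_{i}\ra M$ in which every $P_{i}/P_{i-1}$ is a direct sum of shifted copies $\Si^{n}A$. At the bottom take $P_{0}$ to be the free module on a set of homogeneous generators of $M$ (as a graded module) together with cocycles representing generators of $H^{*}M$, so that $\varphi_{0}$ is already surjective on cohomology; inductively, given $\varphi_{i}$, choose generators for the cocycles of $\cone(\varphi_{i})$ and attach the corresponding free cells (one degree lower) to $P_{i}$ so as to kill those classes, obtaining $\varphi_{i+1}\colon P_{i+1}\ra M$. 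Then $\mathbf{p}M:=\bigcup_{i}P_{i}$, with the induced map to $M$, is a quasi-isomorphism (surjective on cohomology at every finite stage, injective on cohomology in the colimit), and a module carrying such an exhaustive free filtration is cofibrant: against a surjective quasi-isomorphism $L\ra N$ one lifts the filtration cell by cell, each free cell lifting because $L\ra N$ is surjective and the newly prescribed differential being correctable since $\ker(L\ra N)$ is acyclic. The case $M=A$, $P_{0}=A$ recovers the cofibrancy of $A$.

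For the fibrant replacement I would use $k$-linear duality. Since $k$ is a field, $D:=\Hom_{k}(-,k)$ is an exact contravariant equivalence between right dg $A$-modules and right dg $A^{\mathrm{op}}$-modules with $D^{2}\cong\mathrm{id}$. Applying the previous construction over $A^{\mathrm{op}}$ gives a surjective quasi-isomorphism $\mathbf{p}(DM)\ra DM$ with $\mathbf{p}(DM)$ cofibrant; set $\mathbf{i}M:=D\big(\mathbf{p}(DM)\big)$. Then $M\cong D(DM)\ra\mathbf{i}M$ is an injective quasi-isomorphism, and $\mathbf{i}M$ is fibrant because $D$ converts the left lifting property of $\mathbf{p}(DM)$ against surjective quasi-isomorphisms into the extension property of $\mathbf{i}M$ along injective quasi-isomorphisms. (Alternatively $\mathbf{i}M$ may be built directly as an inverse limit of a tower of dg-injective modules, but over a field duality is quicker.)

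For (b) the crux is the comparison statement: for $P$ cofibrant and $N$ arbitrary, the canonical map $\Hom_{\ch(A)}(P,N)\ra\Hom_{\cd(A)}(P,N)$ is bijective. For surjectivity, represent a morphism of $\cd(A)$ by a fraction $P\xleftarrow{\,s\,}X\xrightarrow{\,f\,}N$ with $s$ a quasi-isomorphism; replacing $X$ by the mapping cylinder of $s$ one may take $s$ surjective, whereupon cofibrancy of $P$ provides $g\colon P\ra X$ with $sg=\mathrm{id}_{P}$, so the fraction is the class of $fg$. For injectivity, a map $h\colon P\ra N$ that dies in $\cd(A)$ factors in $\ch(A)$ through an acyclic module $Z$; a surjective cofibrant resolution $\mathbf{p}Z\ra Z$ has $\mathbf{p}Z$ cofibrant and acyclic, hence contractible, and cofibrancy of $P$ lifts $P\ra Z$ through it, forcing $h\simeq0$. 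Granting this, a morphism $M\ra M'$ of $\cd(A)$ corresponds to a unique homotopy class $\mathbf{p}M\ra\mathbf{p}M'$, which makes $\mathbf{p}$ a functor $\cd(A)\ra\ch(A)$ (functoriality by uniqueness), and one gets natural isomorphisms
\[
\Hom_{\ch(A)}(\mathbf{p}M,N)\;\cong\;\Hom_{\cd(A)}(\mathbf{p}M,N)\;\cong\;\Hom_{\cd(A)}(M,N),
\]
the second because $\mathbf{p}M\ra M$ is invertible in $\cd(A)$; this exhibits $\mathbf{p}$ as left adjoint to the projection $\ch(A)\ra\cd(A)$, and the case $N=\mathbf{p}M'$ shows it is fully faithful. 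The statement for $\mathbf{i}$ is the exact dual, using that a fibrant acyclic module is contractible and the extension property along injective quasi-isomorphisms. I expect the main obstacle to be precisely the two nonformal ingredients: the inductive semi-free resolution with the cohomological bookkeeping that forces the colimit to be a quasi-isomorphism, and the comparison isomorphism $\Hom_{\ch(A)}\cong\Hom_{\cd(A)}$ for a cofibrant source, which relies on the calculus of fractions in the Verdier quotient $\cd(A)=\ch(A)/(\text{acyclic modules})$ and on acyclic cofibrant modules being contractible; with those secured, the lifting properties, the dual statements, and the passage to adjunctions are routine.
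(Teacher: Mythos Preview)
The paper does not prove this proposition: it is quoted as a background result with a reference to \cite{kellerDerivingDGCategories1994a} and no argument is given. So there is no paper-side proof to compare against; your proposal is being judged on its own merits.

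Your construction of $\mathbf{p}M$ as a semi-free module, and the derivation of part (b) from the comparison isomorphism $\Hom_{\ch(A)}(P,-)\cong\Hom_{\cd(A)}(P,-)$ for cofibrant $P$, are correct and are essentially Keller's original arguments. The gap is in your fibrant replacement. You claim that $D=\Hom_{k}(-,k)$ is a contravariant equivalence with $D^{2}\cong\mathrm{id}$, but this is false for modules that are not degreewise finite-dimensional: the canonical map $M\ra DDM$ is the evaluation into the double dual, which is an isomorphism only under finiteness hypotheses. Consequently $\mathbf{i}M:=D(\mathbf{p}(DM))$ need not receive a quasi-isomorphism from $M$, and the argument collapses for general $M$.

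The fix is the alternative you mention parenthetically but do not carry out: build $\mathbf{i}M$ directly as the inverse limit of a tower $\cdots\ra I_{1}\ra I_{0}$ of dg modules whose underlying graded modules are graded-injective, dualising the cell-attachment procedure (this is Keller's ``property (I)'' construction, or equivalently Spaltenstein's $K$-injective resolution). The key nonformal point there is the $\lim^{1}$ control needed to show the limit is still a quasi-isomorphism and still has the extension property; this is what replaces your appeal to $D^{2}\cong\mathrm{id}$. With that corrected, the rest of your outline goes through.
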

We call $ \mathbf{p}M\ra M $ a \emph{cofibrant resolution} of $ M $ and $ M\ra \mathbf{i}M $ a \emph{fibrant resolution} of $ M $. We can compute morphisms in $ \cd(A) $ via
$$ \cd(A)(L,M)\simeq\ch(A)(\mathbf{p}L,M)=H^{0}(\ch om_{A}(\mathbf{p}L,M)) .$$

The \emph{perfect derived category} $ \per A $ is the smallest full subcategory of $ \cd(A) $ containing $ A $ which is stable under taking shifts, extensions and direct summands. An object $ M $ of $ \cd(A) $ belongs to $ \per(A) $ if and only if it is \emph{compact}, i.e. the functor $ \Hom_{\cd(A)}(M,-) $ commutes with arbitrary (set-indexed) direct sums (see~\cite[Section 5]{kellerDerivingDGCategories1994a}). The \emph{perfectly valued derived category} $ \pvd(A) $ is the full subcategory of $ \cd(A) $ consisting of those dg $ A $-modules whose underlying dg $ k $-module is perfect, i.e. the total dimension $\sum_{i\in\mathbb{Z}}\dim H^{i}(A)$ is finite.

Let $ f:B\ra A $ be a morphism of dg algebras. Then $ f $ induces the restriction functor $ f_{*}: \cc(A)\ra\cc(B) $. It fits into the usual triple of adjoint functors $ (f^{*} , f_{*},f^{!}) $ between $ \cd(A) $ and $ \cd(B) $.

Denote by $A^{op}$ the opposite dg algebra of $A$. The \emph{enveloping dg algebra} $A^{e}$ of $A$ is defined as $A\ten_{k}A^{op}$ with product $(a\ten b)\cdot(c\ten f)\coloneqq (-1)^{\deg(b)\cdot(\deg(c)+\deg(f))}ac\ten fb$ for all (homogeneous) $a, b, c, d\in A$. The category of dg $A$-bimodules is equivalent to the category of right $A^{e}$-modules. 

In particular, $A$ is a dg $A^{e}$-module for the following structure map
$$A\ten A^{e}\ra A,\,a\ten(b\ten c)\mapsto(-1)^{\deg(c)\cdot(\deg(b)+\deg(a))}cba.$$ We say that $A$ is \emph{(homologically) smooth} if $A$ belongs to $\per(A^{e})$ and $A$ is \emph{proper} if $A\in\pvd(A)$, i.e. the total dimension $\sum_{l\in\mathbb{Z}}\dim_{l} H^{l}(A)$ is finite.

\begin{Def}\rm\label{Def:bimodule dual}
	For a dg $A^{e}$-module $M$, the \emph{derived bimodule dual} $M^{\vee}$ of $M$ is defined as $\RHom_{A^{e}}(M,A^{e})$.
	
\end{Def}

\begin{Def}\rm
	A \emph{graded quiver $Q$} consists of a finite set $Q_{0}$ and a graded set $Q_{1}$ (i.e., a set $Q_{1}$ together with a degree
	map $|\,\,|\colon Q_{1}\ra\mathbb{Z}$), together with two maps $s,t\colon Q_{1}\ra Q_{0}$. We will often simply say that $Q$ is a graded quiver over $\co$. The associated graded path algebra is defined as the graded tensor algebra $T_{kQ_{0}}(kQ_{1})$. 
\end{Def}


A dg algebra $A$ is said to be \emph{semi-free} if its underlying graded vector space (forgetting the differential) is isomorphic to $T_{kQ_{0}}(kQ_{1})$ for some graded quiver $(Q_{0},Q_{1})$.

A semi-free dg algebra $A=T_{kQ_{0}}(kQ_{1})$ is said to be \emph{cellular} if the set $Q_{1}$ has a filtration $Q'_{1}\subset Q'_{2}\subset\cdots $ such
that $Q_{1}=\bigcup Q'_{i}$ and that, for each $f\in Q'_{i}
, i=1, 2,\cdots$, the differential $df$ lies in the $T_{kQ_{0}}(kQ'_{i-1})\subset T_{kQ_{0}}(kQ_{1})$. It is said to be \emph{finitely cellular} if $Q_{1}$ is also finite.

\subsection{Hochschild and cyclic homology}

Let $ \Lambda $ be the dg algebra generated by an indeterminate $ \epsilon $ of cohomological degree $ -1 $ with $ \epsilon^{2}=0 $ and $ d\epsilon=0 $. The underlying complex of $ \Lambda $ is
$$ \cdots\rightarrow k\epsilon\rightarrow k\rightarrow0\ra\cdots. $$
Then a \emph{mixed complex} over $ k $ is a dg right $ \Lambda $-module whose underlying dg $ k $-module is $ (M,b) $ and where $ \epsilon $ acts by a closed endomorphism $ B $. Suppose that $ M=(M,b,B) $ is a mixed complex. Then the \emph{shifted mixed complex} $ \Si M $ is the mixed complex such that $ (\Si M)^{p}=M^{p-1} $ for all $ p $, $ b_{\Si M}=-b $ and $ B_{\Si M}=B $. Let $ f:M\rightarrow M' $ be a morphism of mixed complexes. Then the \emph{mapping cone} over $ f $ is the mixed complex
$$ \bigg(M'\oplus \Si M,\begin{bmatrix}
	b_{M'} & f \\
	0 & -b_{M}
\end{bmatrix},\begin{bmatrix}
	B_{M'} & 0 \\
	0 & B_{M}
\end{bmatrix}\bigg). $$
We denote by $ \cm ix $ the category of mixed complexes and by $ \cd\cm ix $ the derived category of the dg algebra $ \Lambda $.

Let $ A $ be  a dg $ k $-algebra. We associate a precyclic chain complex $ C(A) $ (see~\cite[Definition 2.5.1]{loday2013}) with $ A $ as follows: For each $ n\in\mathbb{N} $, its $ n $-th term is
$$ C_{n}(A)=A\ten_{k}A^{\ten_{k}^{n}} .$$
The degeneracy maps are given by
\begin{equation*}
	d_{i}(a_{n},\ldots,a_{i},a_{i-1},\ldots,a_{0})=\left\{
	\begin{aligned}
		(a_{n},\ldots,a_{i}a_{i-1},\ldots,a_{0})&&\text{if $ i>0 $,}\\
		(-1)^{n+\sigma}(a_{0}a_{n},\ldots,a_{1})&&\text{if $ i=0 $},
	\end{aligned}
	\right.
\end{equation*}
where $ \sigma=(\mathrm{deg}a_{0})(\mathrm{deg}a_{1}+\cdots+\mathrm{deg}a_{n-1}) $. The cyclic operator is given by
$$ t(a_{n-1},\ldots,a_{0})=(-1)^{n+\sigma}(a_{0},a_{n-1},a_{n-2},\cdots,a_{1}) .$$

Then the corresponding \emph{total complex} $ (H\!H(A),b) $ of $ (C(A),b=\sum_{i=0}^{n}(-1)^{i}d_{i}) $ is called the \emph{Hochschild complex} of $ A $ and $b$ is called the Hochschild differential of $A$. The \emph{Hochschild homology} of $ A $ is defined to be the cohomology of this complex. By~\cite[Proposition B.1]{Vandenbergh2015}, the Hochschild complex is quasi-isomorphic to $ A\lten_{A^{e}}A $ in $ \cd(k) $.

We associate a mixed complex $ (M(A),b,B) $ with this precyclic chain complex as follows: Consider the \emph{total complex} $ (H\!H(A),b’) $ of $ (C(A),b’=\sum_{i=0}^{n-1}(-1)^{i}d_{i}) $. The underlying dg module of $ M(A) $ is the mapping cone over $ (1-t) $ viewed as a morphism of complexes

$$ 1-t:(H\!H(A),b')\ra(H\!H(A),b) ,$$ where $ b=\sum_{i=0}^{n}(-1)^{i}d_{i} $ and $ b'=\sum_{i=0}^{n-1}(-1)^{i}d_{i} $.
Its underlying module is $ H\!H(A)\oplus H\!H(A) $; it is endowed with the grading whose $ n $-th component is $ H\!H(A)_{n}\oplus H\!H(A)_{n-1} $ and the differential is 
$$ \begin{bmatrix} b & 1-t \\ 0 & -b' \end{bmatrix} .$$
The operator $ B\colon M\ra M $ is
$$ \begin{bmatrix} 0 & 0 \\ N & 0 \end{bmatrix} ,$$ where $ N=\sum_{i=0}^{n}t^{i} $.

Let $ A' $ be an other dg $ k $-algebra. Let $ f $ be a morphism from $ A' $ to $ A $. Then $ f $ induces a canonical morphism between their Hochschild complexes
$$ \gamma_{f}\colon H\!H(A')\ra H\!H(A) .$$
\begin{Def}\rm
	The \emph{Hochschild homology} $ H\!H_{\bullet}(f) $ of $ f $ is the cohomology of the \emph{relative Hochschild complex} which is defined as follows
	$$ H\!H(f)=\cone(\gamma_{f}\colon HH(A')\ra HH(A)) .$$
\end{Def}

\begin{Rem}
	By definition, a closed element $ \xi=(s\xi_{A'},\xi_{A})\in \cone(\gamma_{f}\colon\HH(A')\ra \HH(A)) $ of degree $ -n $ consists of an element $ \xi_{A'} \in HH(B) $ of degree $ -n+1 $, together with an element $ \xi_{A}\in \HH(A) $ of degree $ -n $, such that $ b_{A'}(\xi_{A'})=0 $ and $ b_{A}(\xi_{A})+\gamma_{f}(\xi_{A'})=0 $, where $ b_{A'} $ and $ b_{A} $ are the Hochschild differentials of $ A' $ and $ A $ respectively.
\end{Rem}

\begin{Def}\rm
	The \emph{cyclic homology} $ H\!C_{\bullet}(A) $ of $ A $ is defined to be the cohomology of the \emph{cyclic chain complex} of $ A $
	$$ \HC(A)=M(A)\lten_{\Lambda}k .$$
	
	The \emph{negative cyclic homology} $ H\!N_{\bullet}(A) $ of $ A $ is defined to be the cohomology of the \emph{negative cyclic chain complex} of $ A $
	$$ \HN(A)=\RHom_{\Lambda}(k,M(A)). $$
\end{Def}

The augmentation morphism $ \Lambda\to k $ induces natural morphisms in $ \cd(k) $
$$ \HN(A)\rightarrow \HH(A)\rightarrow \HC(A).$$

The morphism $ f $ also induces a canonical morphism between their mixed complexes
$$ \gamma_{f}\colon M(A')\ra M(A) .$$
We denote by $ M(f) $ the mapping cone over $ \gamma_{f} $.
\begin{Def}\rm
	The \emph{cyclic homology} $ \HC_{\bullet}(f) $ of $ f\colon A'\rightarrow A $ is defined to be the cohomology of the \emph{cyclic chain complex group} of $ f $
	$$ \HC(f)=M(f)\lten_{\Lambda}k. $$
	The \emph{negative cyclic homology} $ \HN_{\bullet}(f) $ of $ f\colon B\rightarrow A $ is defined to be the cohomology of the \emph{negative cyclic chain complex} of $ f $
	$$ H\!N(f)=\RHom_{\Lambda}(k,M(f)). $$
\end{Def}

\subsection{Relative deformed Calabi--Yau completions}\label{subsection:relative CY completion}
Given a dg algebra $B$, let $(\mathrm{dga}_{k})_{B/}$ be the category of dg algebras under $B$. The forgetful functor $(\mathrm{dga}_{k})_{B/}\ra \cc(B^{e})$, sending a dg morphism $B\ra A$ to the $B$-bimodule $A$, has a left adjoint $T_{B}\colon\cc(B^{e})\ra (\mathrm{dga}_{k})_{B/}$, that can be described as follows$\colon$
\begin{itemize}
	\item[] Let $M$ be a $B$-bimodule. The \emph{tensor algebra} $T_{B}M$ is defined as $$T_{B}M=B\oplus M\oplus (M\ten_{B}M)\oplus\cdots.$$
	The dg structure on $T_{B}M$ is given by the differentials of $B$ and $M$ and the multiplication is given by the concatenation product and we have a canonical morphism $B\ra T_{B}M$ of dg algebras.
\end{itemize}
This is a Quillen adjunction and thus induces an adjunction between their
homotopy categories. We will denote by $\bL T_{B}$ the left derived functor.

Let $ f\colon B\ra A $ be a morphism (not necessarily unital) between smooth dg algebras. Let $ [\xi] $ be an element in $ H\!H_{n-2}(f) $. In this section, we recall the construction of the deformed relative $ n $-Calabi–Yau completion of $ f\colon B\ra A $ with respect to the Hochschild homology class $ \xi\in H\!H_{n-2}(f) $. For more details we refer the reader to \cite{WkY2016}.

The morphism $ f $ induces a morphism in $ \cd(A^{e}) $
$$ m_{f}\colon B\lten_{B^{e}}A^{e}\ra A .$$

After taking the derived bimodule dual (Definition \ref{Def:bimodule dual}), using the smoothness of $ B $, we get a morphism
$$ m_{f}^{\vee}\colon A^{\vee}\ra B^{\vee}\lten_{B^{e}}A^{e}. $$ Let $ \Xi $ be the cofiber of $ m_{f}^{\vee} $.
The dualizing bimodule $ \Theta_{f}=(\cof(B\lten_{B^{e}}A^{e}\ra A))^{\vee} $ of $ f $ is quasi-isomorphic to $ \Si^{-1}\Xi $. 

By the definition of Hochschild homology of $ f $, we have the following long exact sequence
\begin{equation}\label{equation:long exact sequence}
	\cdots\ra H\!H_{n-2}(B)\ra H\!H_{n-2}(A)\ra H\!H_{n-2}(f)\ra H\!H_{n-3}(B)\ra\cdots.
\end{equation}
Thus, the Hochschild homology class $ \xi\in H\!H_{n-2}(f) $ induces an element $ \xi_{B} $ in $ H\!H_{n-3}(B) $.

Notice that since $ B,A $ are smooth, we have the following isomorphisms$\colon$
\begin{equation*}
	\begin{split}
		\Hom_{\cd(B^{e})}(\Si^{n-2}B^{\vee},\Si B)\simeq& H^{3-n}(B\lten_{B^{e}}B)=HH_{n-3}(B),\\
		\Hom_{\cd(A^{e})}(\Si^{n-2}\Xi,\Si A)\simeq& H^{2-n}(\cone(B\lten_{B^{e}}A\ra A\lten_{A^{e}}A))\\
		\leftarrow&H^{2-n}(\cone(B\lten_{B^{e}}B\ra A\lten_{A^{e}}A))\\
		\simeq&HH_{n-2}(f).
	\end{split}
\end{equation*}

Thus the homology class $ [\xi] $ induces a morphism in $ \cd(A^{e}) $
$$ \xi\colon\Si^{n-2}\Xi\ra\Si A $$ and the homology class $ [\xi_{B}] $ induces a morphism in $ \cd(B^{e}) $
$$ \xi_{B}\colon\Si^{n-2}B^{\vee}\ra\Si B .$$
Moreover we have the following commutative diagram in $ \cd(A^{e}) $
\[
\begin{tikzcd}
	\bL f^{*}(\Si^{n-1}B^{\vee})\arrow[r]\arrow[d,"\xi_{B}"]&\Si^{n-2}\Xi\arrow[d,"\xi"]\\
	\bL f^{*}(\Si B)\arrow[r]&\Si A.
\end{tikzcd}
\]
Therefore, the morphism $ \xi_{B} $ gives rise to a ‘deformation’
$$ \bm\Pi_{n-1}(B,\xi_{B}) $$ of $ \bm\Pi_{n-1}(B)=\bL T_{B}(\Si^{n-2}B^{\vee}) $, obtained by adding $ \xi_{B} $ to the differential of $ \bm\Pi_{n-1}(B) $; the morphism $ \xi $ gives rise to a ‘deformation’
$$ \bm\Pi_{n}(A,B,\xi) $$ of $ \bm\Pi_{n}(A,B)=\bL T_{A}(\Si^{n-2}\Xi) $, obtained by adding $ \xi $ to the differential of $ \bL T_{A}(\Si^{n-2}\Xi) $; and the commutative diagram above gives rise to a morphism
$$ \tilde{f}\colon\bm\Pi_{n-1}(B,\xi_{B})\ra\bm\Pi_{n}(A,B,\xi) .$$

A standard argument shows that up to weak equivalence, the morphism $ \tilde{f} $ and the deformations $ \bm\Pi_{n-1}(B,\xi_{B}) $, $ \bm\Pi_{n}(A,B,\xi) $ only depend on the class $ \xi $.

\begin{Def}\rm\cite[Definition 3.14]{WkY2016}\label{Def:deformed CY completion}
	The dg functor $ \tilde{f} $ defined above is called the \emph{deformed relative n-Calabi--Yau completion} of $ f\colon B\ra A $ with respect to the Hochschild homology class $ \xi\in H\!H_{n-2}(f) $.
\end{Def}

\begin{Thm}\cite[Theorem 3.23]{WkY2016}\cite[Theorem 5.36]{BCS2020}\label{Thm:Relative CY completion to left steucture}\label{Thm:Relative defomed CY completion has a canonical left CY}
	If $ \xi $ has a negative cyclic lift, then each choice of such a lift gives rise to a canonical left $ n $-Calabi–Yau structure on the morphism
	$$ \tilde{f}\colon \bm\Pi_{n-1}(B,\xi_{B})\ra\bm\Pi_{n}(A,B,\xi) .$$
\end{Thm}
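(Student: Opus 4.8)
To prove the statement I must exhibit a left $n$-Calabi--Yau structure on $\tilde f$, that is, two pieces of data: a class $\eta\in\HN_{n}(\tilde f)$ in the relative negative cyclic homology of $\tilde f$, and a verification that its image $\bar\eta\in\HH_{n}(\tilde f)$ is non-degenerate in the sense of Brav--Dyckerhoff --- concretely, that $\bar\eta$ induces in $\cd\big(\bm\Pi_{n}(A,B,\xi)^{e}\big)$ and $\cd\big(\bm\Pi_{n-1}(B,\xi_{B})^{e}\big)$ a morphism of distinguished triangles all of whose components are quasi-isomorphisms, exhibiting $\bm\Pi_{n}(A,B,\xi)$ as $n$-Calabi--Yau relative to its ``boundary'' $\bm\Pi_{n-1}(B,\xi_{B})$, the latter being absolutely $(n-1)$-Calabi--Yau via $\bar\eta_{B}$. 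A preliminary point is that both dg algebras are homologically smooth: their underlying graded algebras are tensor algebras $T_{A}(\Si^{n-2}\Xi)$ and $T_{B}(\Si^{n-2}B^{\vee})$ over the smooth algebras $A$, $B$ on the perfect bimodules $\Si^{n-2}\Xi$, $\Si^{n-2}B^{\vee}$, hence smooth; adjoining $\xi$ (resp.\ $\xi_{B}$) to the differential changes only the differential of an explicit finitely cellular bimodule resolution, so smoothness persists and the bimodule duals above are defined.

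The plan is to treat the undeformed completion first and then transport the structure through the deformation. For $\xi=0$ one has $\bm\Pi_{n}(A,B)=\bL T_{A}(\Si^{n-1}\Theta_{f})$, obtained from $A$ by freely adjoining a shifted copy of the relative inverse dualizing bimodule $\Theta_{f}$, together with $\tilde f\colon\bm\Pi_{n-1}(B)\ra\bm\Pi_{n}(A,B)$. I would build the standard ``relative Koszul'' cofibrant resolution of $\bm\Pi_{n}(A,B)$ over its enveloping algebra --- the bar-type complex assembled from $A$, the generator bimodule $\Si^{n-1}\Theta_{f}$ and the structure maps relating them --- and use it to compute $\RHom_{\bm\Pi_{n}(A,B)^{e}}(\bm\Pi_{n}(A,B),\bm\Pi_{n}(A,B)^{e})$; the shape of this resolution is self-dual up to a shift by $n$ once the contribution of $B$ is split off, which yields the non-degeneracy quasi-isomorphisms. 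The accompanying negative cyclic class is the tautological one: the newly adjoined generators carry a canonical pairing with $A$, and with $B$ through $f$, and this pairing, regarded as a cycle in $\HN(\tilde f)$, is automatically Connes-closed because $\Theta_{f}$ is by definition the relative dual of the identity bimodule. This is exactly Yeung's theorem on the undeformed relative Calabi--Yau completion, which I would cite.

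For the deformed completion I would imitate Keller's argument in the absolute case. Filtering $\bm\Pi_{n}(A,B,\xi)$ and $\bm\Pi_{n-1}(B,\xi_{B})$ by the number of adjoined generators, the associated graded dg algebras are the undeformed completions; hence the relative Koszul resolution of the deformed algebra is obtained from the undeformed one by adding to its differential a term built from $\xi$, it is still cofibrant (the filtration is exhaustive and finite in each cohomological degree), and the induced self-duality morphism differs from the undeformed one by a $\xi$-term, so it remains a quasi-isomorphism. For the cyclic datum, the negative cyclic lift $\tilde\xi\in\HN_{n-2}(f)$ of $\xi$ supplies precisely the correction needed: pushing $\tilde\xi$ forward along the canonical map $\HN(f)\ra\HN(\tilde f)$ induced by $A\ra\bm\Pi_{n}(A,B,\xi)$ and $B\ra\bm\Pi_{n-1}(B,\xi_{B})$ and adding it to the tautological class of the undeformed completion produces a cycle for the deformed Connes differential --- this is where the hypothesis that $\xi$ lifts to negative cyclic homology enters, and distinct lifts give distinct but equally canonical structures. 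One then checks that the image of this class in $\HH_{n}(\tilde f)$ is the non-degenerate class produced above, and that on the $B$-side it restricts to the corresponding negative cyclic lift of $\xi_{B}$, so that the whole package is a genuine (not merely weak) relative left $n$-Calabi--Yau structure.

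The main obstacle is the bookkeeping in the deformed relative bar resolution: one must write down the cofibrant bimodule resolution of $\bm\Pi_{n}(A,B,\xi)$ over $\bm\Pi_{n}(A,B,\xi)^{e}$ together with its comparison to the analogous resolution of $\bm\Pi_{n-1}(B,\xi_{B})$, check that the $\xi$-deformed differential squares to zero and that the resolution stays acyclic, and verify that the self-duality map of complexes and the Connes-closedness of the ``tautological plus $\tilde\xi$'' cocycle hold at once and compatibly with the $B$-part. No single step is deep, but keeping the relative structure --- the commuting square of dualities and the restriction to the boundary $\bm\Pi_{n-1}(B,\xi_{B})$ --- coherent throughout is delicate; this is carried out in \cite{WkY2016}, and, from the viewpoint of shifted symplectic structures on derived moduli of objects, in \cite{BCS2020}.
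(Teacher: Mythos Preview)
The paper does not prove this statement at all: it is recorded in the preliminaries section and attributed entirely to \cite[Theorem 3.23]{WkY2016} and \cite[Theorem 5.36]{BCS2020}, with no argument given. So there is no ``paper's own proof'' to compare against.

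Your sketch is a faithful outline of the strategy in those references: establish the undeformed case by building the relative bar/Koszul resolution of the tensor algebra $\bm\Pi_{n}(A,B)$ and observing its self-duality up to shift, then deform by filtering by tensor length so that the associated graded is the undeformed completion, and use the negative cyclic lift $\tilde\xi$ to close the tautological class under Connes' operator in the deformed setting. That is precisely the architecture of Yeung's argument (and of the shifted-symplectic reformulation in \cite{BCS2020}). Your remarks about where the negative cyclic hypothesis enters and about the bookkeeping difficulty on the boundary side are accurate. Since the paper simply imports the result, your proposal goes well beyond what the paper does; if you wished to match the paper you would simply cite the two references and move on.
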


\section{Group actions on Calabi--Yau completions}\label{Section3}

\subsection{Group actions on dg morphisms}

Let $ G $ be a finite group. Denote by $ \id $ the neutral element of $ G $ and by $ k[G] $ the group algebra of $ G $. 
Let $ A $ be a dg algebra over $ k $.

\begin{Def}\rm
An \emph{action of $ G $ on $ A $ by dg automorphisms} is a morphism of complexes $\beta_{G}\colon k[G]\ten_{k} A\ra A $, where $ k[G] $ is a complex in degree $ 0 $ and with zero differential, such that, denoting by $ ^{g}a $ the image of $ g\ten a $, for all $ g\in G $ and $ a\in A  $, then 
	\begin{itemize}
		\item $^g(ab) = \,^ga\,^gb$ for all $ a,b\in A $ and $ g\in G $ and
		\item $ ^{\id}a=a $ and $ ^{gh}a=\,^{g}(^{h}a) $ for all $ a\in A $ and $ g,h\in G $. 
	\end{itemize}

Let $ f\colon B\ra A $ be a dg morphism (not necessarily unital). We say that \emph{$f$ is $G$-equivariant} if $G$ acts on both $B$ and $A$, and for each $b\in B$ and $g\in G$, we have $f(^{g}b)=\,^{g}\!f(b)$. 

\end{Def}

Now suppose that $ G $ acts on $ A $ by dg automorphisms. For a dg $A$-module $M$, a \emph{compatible action of $ G $ on $ M $ by dg automorphisms} is a morphism of complexes $k[G]\ten_{k} M\ra M $ such that, denoting by $ ^{g}m $ the image of $ g\ten m $, for all $ g\in G $ and $ m\in M  $, then 
\begin{itemize}
	\item $^g(m+n) = \,^gm+\,^gn$ for all $ m,n\in M $ and $ g\in G $ and
	\item $ ^{\id}m=m $ and $ ^{gh}m=\,^{g}(^{h}m) $ for all $ m\in M $ and $ g,h\in G $.
	\item $^{g}(m\cdot a)=\,^{g}m\cdot\,^{g}a $ for all $g\in G$, $m\in M$ and $a\in A$.
\end{itemize}

\begin{Def}\rm\label{Def: Skew-dg-algebra}
	The \emph{skew group dg algebra $A*G$} is the dg algebra defined as follows.
	\begin{itemize}
		\item The underlying complex of vector spaces equal to $ A\ten_{k}k[G] $, where any tensor $ a\ten g $ with $ a\in A $ and $ g\in G $ is denoted by $ a*g $;
		\item Multiplication is given by $$ (x* g)\cdot(y* h)=x\,^{g}\!y* gh $$ for all $ x,y\in A $ and $ g,h\in G $.
	\end{itemize}
\end{Def}
\begin{Rem}
	When $ A $ is a usual algebra, i.e. $ A=A^{0} $, the above definition coincides with the classical definition in~\cite{RR85}.
\end{Rem}

The action of $G$ on $A$ induces an action of $G$ on $ A^{e}$ which is defined as follows
$$
G\times A^{e}\ra A^{e} $$
$$\qquad\qquad\qquad\quad
(g,a\ten b)\mapsto\,^{g}(a\ten b)\coloneqq\,^{g}a\ten\,^{g}b.
$$
The corresponding skew group dg algebra is denote by $A^{e}*G$. Moreover, the action of $G$ on $A^{e}$ is compatible with the canonical right $A^{e}$-module structure on $A^{e}$.

\begin{Prop}\cite[Proposition 2.2 and Corollary 2.3]{Amiot-Plamondon2018}\label{Prop: cohomology of skew}
	The algebra $ A*G $ is a dg algebra. Moreover, $ G $ also acts on $ H^{0}(A) $ and $ H^{0}(A)*G=H^{0}(A*G) $ and the cohomology $H^{i}(A*G)$ is isomorphic to $H^{i}(A)\ten_{k}k[G]$ for each $i\in\mathbb{Z}$.
\end{Prop}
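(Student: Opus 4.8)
The plan is to observe that $A*G$ is just $A\ten_k k[G]$ as a complex, and $k[G]$ is concentrated in degree $0$ with zero differential, so no subtlety arises at the level of the underlying complex; the only thing requiring a genuine argument is that the multiplication in Definition~\ref{Def: Skew-dg-algebra} is compatible with the differential, i.e.\ the graded Leibniz rule. First I would check that $A*G$ is a dg algebra by verifying associativity of the product $(x*g)(y*h)=x\,{}^g y*gh$ — this is a routine computation using that $G$ acts by dg automorphisms, in particular that each $a\mapsto{}^g a$ is a morphism of complexes and an algebra map — and then verifying $d(x\,{}^g y*gh)=(dx)\,{}^g y*gh+(-1)^{|x|}x\,{}^g(dy)*gh$, which follows from the Leibniz rule in $A$ together with the fact that $d({}^g y)={}^g(dy)$ because the $G$-action commutes with $d$ (it is a morphism of complexes $k[G]\ten_k A\to A$ with $k[G]$ having zero differential). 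The sign bookkeeping is straightforward since $g$ and $h$ carry degree $0$.

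Next, for the statement about $H^0$: $G$ acts on $H^0(A)$ simply by functoriality of cohomology applied to the dg automorphisms ${}^g(-)$, and the identity $H^0(A)*G=H^0(A*G)$ is the degree-$0$ case of the cohomology computation below, where in degree $0$ the product formula for the skew group algebra of the ordinary algebra $H^0(A)$ matches the induced product on $H^0(A*G)$ by inspection.

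For the computation of $H^i(A*G)$, the key point is that tensoring over $k$ with the complex $k[G]$ — which is a finite-dimensional $k$-vector space placed in degree $0$ — is an exact functor, so the Künneth formula (trivial here, since one factor is a flat module concentrated in a single degree) gives a natural isomorphism of graded $k$-modules
\[
H^i(A*G)=H^i(A\ten_k k[G])\;\cong\;H^i(A)\ten_k k[G]
\]
for every $i\in\mathbb{Z}$. I would spell this out by noting that the differential of $A\ten_k k[G]$ is $d_A\ten\id$, so cycles and boundaries are exactly (cycles of $A$)$\ten k[G]$ and (boundaries of $A$)$\ten k[G]$ respectively, and the quotient is $H^i(A)\ten_k k[G]$.

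The only mild obstacle is the sign conventions in the Leibniz-rule check for the product of $A*G$, and making sure the $G$-action being a \emph{morphism of complexes} is used in the right place (to get $d\circ{}^g(-)={}^g(-)\circ d$); everything else is formal. Since this is essentially a restatement of \cite[Proposition 2.2 and Corollary 2.3]{Amiot-Plamondon2018}, I would keep the proof brief and refer to that source for the detailed verifications. \qed
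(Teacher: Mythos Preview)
Your proposal is correct and follows exactly the standard verification one would expect: associativity and the Leibniz rule using that ${}^g(-)$ commutes with $d$, plus the trivial K\"unneth computation since $k[G]$ is concentrated in degree $0$. The paper itself gives no proof for this proposition, relying entirely on the citation to \cite{Amiot-Plamondon2018}, so your sketch is essentially reproducing what that reference does and is appropriate here.
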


Denote by $\Lambda_{A}$ the skew group dg algebra $A*G$. We have a canonical morphism of dg algebras
$$\iota\colon A\ra\Lambda_{A}$$
$$\hspace{0.8cm} a\mapsto a*\id.$$


\subsection{The dg algebra $\triangle$ and its dg modules}
Let $\triangle_{A}$ be the following dg subalgebra of $\Lambda_{A}^{e}$
$$ \triangle_{A}\coloneqq\underset{g\in G}{\oplus}(A*g)\ten(A*g^{-1})\subseteq\Lambda_A^{e}=(A*G)\ten_{k}(A*G)^{op} .$$

Let $M$ be a dg $\triangle_{A}$-module. We can regard $M$ as a dg $A^{e}$-module with a compatible $G$-action as follows
\begin{itemize}
	\item for all $a,b\in A$, define $m\cdot(a\ten b)\coloneqq m\cdot((a*\id)\ten(b*\id))$,
	\item for all $g\in G$, define $^{g}m\coloneqq m\cdot((1*g^{-1})\ten(1*g))$.
\end{itemize} 
Conversely, let $N$ be a dg $A^{e}$-module with a compatible $G$-action. The structure of dg $\triangle_{A}$-module on $M$ is given by $m\cdot((a*g)\ten(a*g^{-1}))\coloneqq(\,^{g^{-1}}m)\cdot\,(^{g^{-1}}a\ten b)$. 

\begin{Lem}\cite[Section 3.1]{LeMeur2020}\label{Lemma:Tri_A=A^e-bimodule}
	The above shows that the category of $\triangle_{A}$-modules is equivalent to the category of dg $A^{e}$-modules with compatible $G$-actions.
\end{Lem}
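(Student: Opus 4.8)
The statement to prove is Lemma~\ref{Lemma:Tri_A=A^e-bimodule}, asserting that the category of dg $\triangle_{A}$-modules is equivalent to the category of dg $A^{e}$-modules equipped with a compatible $G$-action. The plan is to check that the two assignments already written down in the text — sending a $\triangle_{A}$-module $M$ to the $A^{e}$-module with the $G$-action $^{g}m = m\cdot((1*g^{-1})\ten(1*g))$, and sending an $A^{e}$-module with compatible $G$-action $N$ to the $\triangle_{A}$-module with $m\cdot((a*g)\ten(a*g^{-1})) = (^{g^{-1}}m)\cdot(^{g^{-1}}a\ten b)$ — are mutually inverse functors (or at least mutually inverse on objects, the morphism part being immediate since in both directions a morphism is just a degree-$0$ closed $k$-linear map, and one checks the respective linearity conditions are equivalent). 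So the first step is to verify that each assignment lands in the correct target category: for $M$ a $\triangle_{A}$-module, that $m\mapsto\,^{g}m$ really defines a $G$-action (use $^{\id}m = m\cdot((1*\id)\ten(1*\id)) = m$ and $^{g}(^{h}m) = \,^{gh}m$, which follows from the multiplication rule in $\triangle_{A}\subseteq\Lambda_{A}^{e}$) and that it is compatible with the $A^{e}$-structure, i.e. $^{g}(m\cdot(a\ten b)) = \,^{g}m\cdot(^{g}a\ten\,^{g}b)$; conversely, for $N$ with compatible $G$-action, that the stated formula defines an associative, unital $\triangle_{A}$-module structure.

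The second step is to confirm that $\triangle_{A}$ is genuinely a dg subalgebra of $\Lambda_{A}^{e}$ and that its underlying graded algebra decomposes as $\bigoplus_{g\in G}(A*g)\ten(A*g^{-1})$ in a way compatible with multiplication: the product of a homogeneous element of the $g$-summand with one of the $h$-summand lands in the $gh$-summand, because $(a*g)\cdot(c*h) = a\,^{g}c * gh$ in $A*G$ while $(b*g^{-1})\cdot(d*h^{-1})$ multiplies in the opposite algebra to give something in $A*(gh)^{-1}$. This bookkeeping is what makes the two formulas above match up: giving a $\triangle_{A}$-action is the same as giving, for every $g$, an action of the $g$-summand, and the $\id$-summand $(A*\id)\ten(A*\id)\cong A^{e}$ carries the underlying bimodule structure while the other summands encode the twist. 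The third step is then the round-trip computation: starting from a $\triangle_{A}$-module, pass to $(A^{e}$-module $+$ $G$-action$)$ and back, and check that the reconstructed $\triangle_{A}$-action on the $g$-summand, namely $m\cdot((a*g)\ten(b*g^{-1})) = (^{g^{-1}}m)\cdot(^{g^{-1}}a\ten b)$, agrees with the original; and symmetrically for the other composite. Both reduce to substituting the definitions and using the identities $^{gh}m = \,^{g}(^{h}m)$ and the compatibility axiom.

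The main obstacle — really the only delicate point — is getting the Koszul-type signs and the left/right-module conventions to line up: $\Lambda_{A}^{e} = (A*G)\ten_{k}(A*G)^{op}$ has the twisted multiplication with the sign $(-1)^{\deg(b)(\deg(c)+\deg(f))}$ from the definition of the enveloping algebra, and one must be careful that the element $1*g$ has degree $0$, so most of these signs vanish, but the interaction of the $G$-twist with the right-versus-left action (the appearance of $g^{-1}$ on the module side versus $g$ on the algebra side) has to be tracked meticulously so that associativity holds on the nose rather than up to a sign or an inverse. I would organize this by first treating everything in degree $0$ (where it is the classical statement, essentially from \cite{RR85} and \cite{LeMeur2020}) and then observing that the dg structure adds nothing new because all group elements sit in degree $0$ with zero differential, so the differential on $\triangle_{A}$ is just $d_{A^{e}}$ on each summand and commutes with the $G$-action. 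With that reduction in hand the verification is routine, and the lemma follows; hence $\surd$.
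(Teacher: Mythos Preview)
Your proposal is correct and follows the same approach as the paper: the paper simply records the two mutually inverse assignments (immediately before the lemma) and defers the routine verification to the cited reference, so your write-up is a more explicit version of exactly that check. There is nothing to add beyond noting that the paper itself gives no further argument.
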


For a dg $\triangle_{A}$-module $N$, we associate a dg $\Lambda_{A}^{e}$-module $M*G$ with underlying complex of vector spaces $M\ten_{k}k[G]$ and with action of $\Lambda_{A}^{e}$ such that $(m*g)\cdot(a*h\ten b*k)\coloneqq(\,^{k}m\cdot(^{kg}a\ten b))*kgh$ for all $m\in M$, $a,b\in A$ and $g,h,k\in G$.

Notice that $A$ can be endowed with the following dg $\triangle_{A}$-module structure
$$A\ten\triangle_{A}\ra A$$
$$\hspace{3.2cm}a\ten(b*g\ten c*g^{-1})\mapsto(-1)^{\deg(c)\cdot(\deg(b)+\deg(a))}c\,\,^{g^{-1}}\hspace{-0.3cm}a\,\,^{g^{-1}}\hspace{-0.07cm}b.$$
It is easy to see that the resulting dg $\Lambda_{A}^{e}$-module $A*G$ is $\Lambda_{A}$.
\begin{Prop}\cite[Lemma 3.1.1, Lemma 3.3.1 and Proposition 3.3.2]{LeMeur2020}\label{Prop:Props in LeMeur}
We have an isomorphism of dg algebras
$$ \triangle_{A}\ra A^{e}*G $$
$$ \hspace{-1.4cm} (a*g)\ten(b*g^{-1})\mapsto(a\ten\,^{g}b)*g .$$
Moreover, we have
\begin{itemize}
	\item [(1)] $\triangle_{A}\simeq(A^{e})^{\car(G)}$ as dg $A^{e}$-modules; $\Lambda_{A}^{e}\simeq\triangle_{A}^{\car(G)}$ as dg $\triangle_{A}$-modules;
	\item[(2)] Let $M\in\Mod(\triangle_{A})$, then
	\subitem(a) the following mapping is an isomorphism of dg $\Lambda_{A}^{e}$-modules
	$$M\ten_{\triangle_{A}}\Lambda_{A}^{e}\ra M*G$$
	$$\hspace{2.2cm}m\ten(a*g\ten b*h)\mapsto (m\cdot(a*h^{-1}\ten b*h))*hg.$$
	\subitem(b) $M$ is a direct summand of $M\ten_{\triangle_{A}}\Lambda_{A}^{e}$ in $\Mod(\triangle_{A})$. If $\charc(k)\nmid\car(G)$ then $M$ is a direct summand of $M\ten_{A^{e}}\triangle_{A}$ in $\Mod(\triangle_{A})$.
	\item[(3)] The restriction-of-scalars functor $\Mod(\triangle_{A})\ra\Mod(A^{e})$ maps cofibrant objects to cofibrant objects. Moreover, for all cofibrant resolution $X\ra A$ in in $\Mod(\triangle_{A})$, the composite morphism $$X*G\simeq X\ten_{\triangle_{A}}\Lambda_{A}^{e}\ra A\ten_{\triangle_{A}}\Lambda_{A}^{e}\ra A*G=\Lambda_{A}$$ is a cofibrant resolution in $\Mod(\Lambda_{A}^{e})$.
	
	\item[(4)] The functor $\Hom_{A^{e}}(-,A^{e})\colon \Mod(A^{e})\ra\Mod(A^{e})$ induces a functor $\Mod(\triangle_{A})\ra\Mod(\triangle_{A})$ also denoted by $\Hom_{A^{e}}(-,A^{e})$ and whose total derived functor $\RHom_{A^{e}}(-,A^{e})$ is such that $$\RHom_{A^{e}}(-,A^{e})\ten_{\triangle_{A}}\Lambda_{A}^{e}\simeq\RHom_{\Lambda_{A}^{e}}(-\ten_{\triangle_{A}}\Lambda_{A}^{e},\Lambda_{A}^{e}).$$
	In particular, we have $$A^{\vee}*G=\RHom_{A^{e}}(A,A^{e})\ten_{\triangle_{A}}\Lambda_{A}^{e}\simeq\RHom_{\Lambda_{A}^{e}}(A\ten_{\triangle_{A}}\Lambda_{A}^{e},\Lambda_{A}^{e})\simeq\RHom_{\Lambda_{A}^{e}}(\Lambda_{A},\Lambda_{A}^{e}).$$
\end{itemize}	
\end{Prop}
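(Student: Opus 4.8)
The plan is to reduce everything to the single explicit identification $\triangle_{A}\ra A^{e}*G$, $(a*g)\ten(b*g^{-1})\mapsto(a\ten\,^{g}b)*g$, together with two $G$-graded decompositions of $\Lambda_{A}^{e}$, and then to bootstrap. First I would verify that the displayed map is an isomorphism of dg algebras: on underlying graded spaces both sides are $\bigoplus_{g\in G}A\ten_{k}A^{op}$, with inverse $(a\ten b)*g\mapsto(a*g)\ten(^{g^{-1}}b*g^{-1})$, and compatibility with multiplication is checked by expanding $((a*g)\ten(b*g^{-1}))\cdot((a'*h)\ten(b'*h^{-1}))$ using the enveloping-algebra product of $\Lambda_{A}^{e}$ on one side and the skew-group product of $A^{e}*G$ on the other; the only care needed is the Koszul signs, and the $G$-action contributes none since it is degree-preserving. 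Compatibility with differentials is automatic because $k[G]$ sits in degree $0$ with zero differential and commutes with $d_{A}$. I would also record that this isomorphism intertwines the various $A^{e}$- and $\triangle_{A}$-module structures used below.

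For (1) I would decompose along $G$. As a dg $A^{e}$-module $\triangle_{A}=\bigoplus_{g\in G}(A*g)\ten(A*g^{-1})$, and each summand is isomorphic to $A\ten_{k}A^{op}=A^{e}$ once one untwists the $G$-action (each $A*g$ being isomorphic to $A$ as a one-sided dg $A$-module), giving $\triangle_{A}\simeq(A^{e})^{\car(G)}$. For $\Lambda_{A}^{e}\simeq\triangle_{A}^{\car(G)}$ as dg $\triangle_{A}$-modules, the key observation is that right multiplication by a $G$-homogeneous component of $\triangle_{A}$ sends $(A*g_{1})\ten(A*g_{2})$ into $(A*g_{1}h)\ten(A*h^{-1}g_{2})$, hence preserves the product $g_{1}g_{2}$, while left multiplication preserves $g_{2}g_{1}$; so $\Lambda_{A}^{e}$ is a coproduct of $\car(G)$ right (resp. left) $\triangle_{A}$-submodules indexed by the value of $g_{1}g_{2}$ (resp. $g_{2}g_{1}$), each isomorphic after reindexing to the component with value $\id$, which is $\triangle_{A}$ itself. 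Parts (2)(a) and (2)(b) then follow quickly: for (a) one checks $\triangle_{A}$-balancedness and $\Lambda_{A}^{e}$-linearity of the displayed map by substitution, with bijectivity forced by the freeness just established and an explicit inverse (and by $M*G=M\ten_{k}k[G]$); for (b), the inclusion of the $\id$-component $\triangle_{A}\hookrightarrow\Lambda_{A}^{e}$ is split as a $\triangle_{A}$-bimodule by projection onto it (both sides of the multiplication preserve the $\id$-component, since $h\,\id\,h^{-1}=\id$), so $M=M\ten_{\triangle_{A}}\triangle_{A}$ is a direct summand of $M\ten_{\triangle_{A}}\Lambda_{A}^{e}$; the second half of (b) is the Maschke-type splitting of the counit $M\ten_{A^{e}}(A^{e}*G)\ra M$ by $\car(G)^{-1}$ times the norm, available because $\charc(k)\nmid\car(G)$ and transported along $\triangle_{A}\simeq A^{e}*G$.

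For (3), by (1) the restriction of $\triangle_{A}$ to $A^{e}$ is a finite coproduct of copies of $A^{e}$, hence cofibrant; since restriction of scalars commutes with shifts, set-indexed coproducts and cones, it carries every module built from $\triangle_{A}$ by these operations --- in particular every semifree, hence (by passing to retracts) every cofibrant $\triangle_{A}$-module --- to a cofibrant $A^{e}$-module. For the last assertion, $\Lambda_{A}^{e}$ is free over $\triangle_{A}$ by (1), so $-\ten_{\triangle_{A}}\Lambda_{A}^{e}$ preserves quasi-isomorphisms; it sends $\triangle_{A}$ to $\Lambda_{A}^{e}$ and commutes with the same operations, hence sends cofibrant $\triangle_{A}$-modules to cofibrant $\Lambda_{A}^{e}$-modules. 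Applying this to a cofibrant resolution $X\ra A$ and identifying $X\ten_{\triangle_{A}}\Lambda_{A}^{e}\simeq X*G$, $A\ten_{\triangle_{A}}\Lambda_{A}^{e}\simeq A*G=\Lambda_{A}$ via (2)(a) gives the statement.

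Finally, for (4): the $G$-action on $A^{e}$ lets one transport $A^{e}$-linear maps into $A^{e}$ along each $g\in G$, endowing $\Hom_{A^{e}}(N,A^{e})$ with a compatible $G$-action whenever $N$ carries one, which defines the asserted lift $\Mod(\triangle_{A})\ra\Mod(\triangle_{A})$ of $\Hom_{A^{e}}(-,A^{e})$ (phrased cleanly through Lemma \ref{Lemma:Tri_A=A^e-bimodule}). For the derived identity I would evaluate on a cofibrant resolution $X$: by (3) the left-hand side computes $\Hom_{A^{e}}(X,A^{e})\ten_{\triangle_{A}}\Lambda_{A}^{e}$ and the right-hand side $\Hom_{\Lambda_{A}^{e}}(X\ten_{\triangle_{A}}\Lambda_{A}^{e},\Lambda_{A}^{e})$, and there is a canonical base-change morphism between them which is an isomorphism for $X=\triangle_{A}$ (unwinding to the identifications of (1) together with $\triangle_{A}\simeq A^{e}*G$) and is compatible with shifts, finite coproducts, cones and retracts. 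Taking $X\ra A$ a cofibrant resolution over $\triangle_{A}$, part (3) shows $X*G\simeq X\ten_{\triangle_{A}}\Lambda_{A}^{e}\ra\Lambda_{A}$ is a cofibrant resolution over $\Lambda_{A}^{e}$, so the right-hand side computes $\RHom_{\Lambda_{A}^{e}}(\Lambda_{A},\Lambda_{A}^{e})$ and the left-hand side is $A^{\vee}*G$, which is the ``in particular'' --- and, read as $(A*G)^{\vee}\simeq A^{\vee}*G$, the compatibility that ultimately powers the paper. The main obstacle I expect is precisely here: fixing the correct bimodule structures on $\Hom_{A^{e}}(-,A^{e})$ and on its $\triangle_{A}$-lift, writing the base-change map so that its $\triangle_{A}$- and $\Lambda_{A}^{e}$-linearity are manifest, and controlling its interaction with the infinite coproducts in semifree resolutions; a clean way to sidestep the last point is to reduce first to the homologically smooth case, where every object in sight is perfect and $\Hom$ commutes with the relevant finite colimits. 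Everything before (4) is a finite reindexing over $G$ plus Koszul-sign bookkeeping.
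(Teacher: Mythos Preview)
The paper does not actually prove this proposition: it is stated with a citation to \cite[Lemma 3.1.1, Lemma 3.3.1 and Proposition 3.3.2]{LeMeur2020} and no proof is given in the text. Your proposal is therefore not being compared against an argument in this paper but against the cited source.

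That said, your outline is correct and is essentially the standard route (and the one taken in Le~Meur's paper): the explicit isomorphism $\triangle_{A}\simeq A^{e}*G$ together with the $G$-graded decomposition of $\Lambda_{A}^{e}$ into $\triangle_{A}$-summands indexed by the value of $g_{1}g_{2}$ gives (1); (2)(a) is the explicit balancing check you describe, (2)(b) is the bimodule splitting of the $\id$-component plus the Maschke-type section; (3) follows from freeness of $\triangle_{A}$ over $A^{e}$ and of $\Lambda_{A}^{e}$ over $\triangle_{A}$; and (4) is the base-change comparison verified on $\triangle_{A}$ and propagated through the perfect/semifree building operations. Your caveat about controlling infinite coproducts in (4) is well-placed, and your suggested fix --- restricting to the homologically smooth case so that everything is perfect --- is exactly how the result is used in the present paper (only for finitely cellular, hence smooth, $A$), so no further work is needed there.
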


\subsection{Equivariant Cuntz-Quillen resolutions}

Let $\Omega^{1}(A)$ be the dg $A^{e}$-module $\ker(A\ten_{k}A\xrightarrow{m}A)$, where $m\colon A\ten_{k}A\ra A$ is the multiplication map. Then $\Omega^{1}(A)$ is generated by $D(f)\coloneqq f\ten1-1\ten f$ and it has a compatible $G$-action defined as $\,^{g}D(f)\coloneqq D(^{g}f)$. Hence there is a short exact sequence of dg $\triangle_{A}$-modules
$$0\ra\Omega^{1}(A)\xrightarrow{\alpha}A\ten_{k}A\xrightarrow{m}A\ra0.$$

Denote by $\cs(A)\in\Mod(\triangle_{A})$ the cone
$$\cs(A)\coloneqq\cone(\alpha\colon\Omega^{1}(A)\ra A\ten_{k}A)$$
called the \emph{Cuntz-Quillen resolution}. Clearly, there is a quasi-isomorphism in $\Mod(\triangle_{A})$
$$r_A\colon\cs(A)\twoheadrightarrow A $$
induced by the multiplication map. The underlying graded space of $\cs(A)$ is $\Si\Omega^{1}(A)\oplus (A\ten_{k}A)$. An element in $\Si\Omega^{1}(A)$ has the form $sD(f)$.

If $A$ is finitely cellular, i.e. there exists a finite graded quiver $Q$ such that $A=T_{kQ_0}(kQ_1)$, then $\cs(A)$ is cellular of finite rank, with basis $\{sD(f)\,|\,f\in Q_1\}\cup\{E_x\,|\,x\in Q_0\}$ where $E_x=e_x\ten e_x$ are the basis elements in $A\ten_k A$.

In particular, it is cofibrant and perfect (\cite[Corollary 2.19]{WkY2016}). By Proposition \ref{Prop:Props in LeMeur}, the canonical morphism $\cs(A)*G\ra A*G$ is a cofibrant replacement of $A*G$. 

Moreover, the $ A $-bimodule $ \cs(A)^{\vee} $ is also cellular of finite rank, with basis $ \{g^{\vee}|g\in Q_{1}\} \cup \{c_{y}|y\in Q_{0}\} $ where the arrow $ g^{\vee} $ has degree $ |g^{\vee}| =1-|g|$, and points in the opposite direction to $ g $; the loop $ c_{y} $ has degree $ |c_{y}|=0 $ , and is based at $ y $. In particular, it is cofibrant and perfect.

\subsection{Equivariant relative Calabi--Yau completions}
Let $G$ be a finite group. 
Let $ f\colon B\ra A $ be a dg morphism (not necessarily unital) between finitely cellular dg algebras such that $f$ is $G$-equivariant. By \cite[Remark 24.2.8]{WKY2016-thesis}, we can assume that $ f\colon B \to A$ is a semi-free extension, i.e.\ there is a finite graded quiver $ Q $ and a subquiver $ F\subseteq Q $ such that the underlying graded $ k $-category of $ B $ and $ A $ are isomorphic to $ T_{kF_{0}}(kF_{1}) $ and $ T_{kQ_{0}}(kQ_{1}) $, respectively. In particular, $B$ and $A$ are smooth (\cite[Corollary 2.22]{WkY2016}).

Let $r_{B}\colon \cs(B)\ra B$ (resp. $r_{A}\colon \cs(A)
\ra A$) be the Cuntz-Quillen resolution of $B$ (resp. $A$) in $\Mod(\triangle_{B})$ (resp. $\Mod(\triangle_{A})$). Then there exists a morphism $r_{f}\colon \cs(B)\ra \cs(A)$ of $\triangle_{B}$-modules such that the following square commutes in $\Mod(\triangle_{B })$
\begin{equation}\label{commutative diagram}
	\begin{tikzcd}
	\cs(B)\arrow[r,"r_{f}"]\arrow[d]&\cs(A)\arrow[d]\\
		B\arrow[r,"f"]&A.
	\end{tikzcd}
\end{equation}

Notice that $\cs(B)$ and $\cs(A)$ are also cofibrant in $\Mod(B^{e})$ and $\Mod(A^{e})$ respectively (see~\cite[Lemma 3.1.1]{LeMeur2020}).
By the above diagram~(\ref{commutative diagram}), we obtain a morphism of Hochschild chain complexes 
$$\Theta_{f}\colon \cs(B)\ten_{B^{e}}B\ra \cs(A)\ten_{A^{e}}A.$$
Moreover, it is $G$-equivariant provided that $\cs(B)\ten_{B^{e}}B$ and $ \cs(A)\ten_{A^{e}}A$ are endowed with the following actions of $G$
\begin{itemize}
	\item for all $g\in G$, $x\in \cs(B)$ and $b\in B$, let $^{g}(x\ten b)=\,^{g}x\ten\,^{g}b$;
	\item similar action for $\cs(A)\ten_{A^{e}}A$.
\end{itemize}

Then Hochschild homology $ H\!H_{\bullet}(f) $ of $ f $ is computed by $\cone(\Theta_{f})$ and $G$ also acts on $H\!H_{\bullet}(f)$. For each $k\in\mathbb{Z}$, denote by $\HH_{k}(B)^{G}$, $\HH_{k}(A)^{G}$, $\HH_{k}(f)^{G}$ the corresponding $G$-invariant subspace.

By the definition of $H\!H_{\bullet}(f)$, we have the following long exact sequence
\begin{equation*}\label{equation:long exact sequence1}
	\cdots\ra H\!H_{n-2}(B)\ra H\!H_{n-2}(A)\ra H\!H_{n-2}(f)\xrightarrow{\delta} H\!H_{n-3}(B)\ra\cdots.
\end{equation*}

And $\delta$ induces a map between the $G$-invariant subspaces
\begin{equation}\label{equation:G-map}
	\delta^{G}\colon H\!H_{n-2}(f)^{G}\ra H\!H_{n-3}(B)^{G}.
\end{equation}
 
\bigskip

The morphism $r_{f}\colon \cs(B)\ra \cs(A)$ of dg $\triangle_{B}$-modules in (\ref{commutative diagram}) induces a morphism $m_{f}\colon \cs(B)\ten_{\triangle_{B}}\triangle_{A}\ra \cs(A)$ of dg $\triangle_{A}$-modules. After taking the bimodule dual, we get a morphism in $\Mod(\triangle_{A})$
$$m_{f}^{\vee}\colon \cs(A)^{\vee}\ra \cs(B)^{\vee}\ten_{\triangle_{B}}\triangle_{A}.$$

Equivalently, we get a morphism of dg $A^{e}$-modules
$$s_{f}^{\vee}\colon\cs(A)^{\vee}
\ra\cs(B)^{\vee}\ten_{B^{e}}A^{e}$$
which is compatible with $G$-actions on $\cs(A)^{\vee}$ and $\cs(B)^{\vee}\ten_{B^{e}}A^{e}$ (see Lemma~\ref{Lemma:Tri_A=A^e-bimodule}).

Notice that $\cs(A)^{\vee}$ and $\cs(B)^{\vee}\ten_{B^{e}}A^{e}$ are also cofibrant. Let $ \Xi $ be the cone of $ s_{f}^{\vee} $. Then $\Xi$ is a dg $A^{e}$-module with a compatible $G$-action, i.e. $\Xi$ is a $\triangle_{A}$-module.

\begin{Rem}
	The $ B $-bimodule $ \cs(B)^{\vee} $ is cellular of finite rank, with basis $ \{f^{\vee}|f\in F_{1}\} \cup \{c_{x}|x\in F_{0}\} $ where the arrow $ f^{\vee} $ has degree $ |f^{\vee}| =1-|f|$, and points in the opposite direction to $ f $; the loop $ c_{x} $ has degree $ |c_{x}|=0 $ , and is based at $ x $. Similarly, the $ A $-bimodule $ \cs(A)^{\vee} $ is also cellular of finite rank, with basis $ \{g^{\vee}|g\in Q_{1}\} \cup \{c_{y}|y\in Q_{0}\} $ where the arrow $ g^{\vee} $ has degree $ |g^{\vee}| =1-|g|$, and points in the opposite direction to $ g $; the loop $ c_{y} $ has degree $ |c_{y}|=0 $ , and is based at $ y $.
	The map
	$$s_{f}^{\vee}\colon\cs(A)^{\vee}
	\ra\cs(B)^{\vee}\ten_{B^{e}}A^{e}$$ is surjective. Let $\ck$ be the kernel of $s_{f}^{\vee}$. Then $\Xi$ is quasi-isomorphic to $\Si\ck$ \cite[Section 3.6]{Wu2023}.
\end{Rem}

Let $\xi$ be an element of $\HH_{n-2}(f)^{G}\subseteq\HH_{n-2}(f)$ and let $ \xi_{B}=\delta^{G}(\xi)\in H\!H_{n-3}(B)^{G}\subseteq H\!H_{n-3}(B) $, where $\delta^{G}$ is the map (\ref{equation:G-map}).

The homology class $ \xi\in\HH_{n-2}(f)^{G}\subseteq\HH_{n-2}(f) $ induces a morphism in $ \cd(A^{e}) $
$$ \xi\colon\Si^{n-2}\Xi\ra\Si A $$ which is compatible with the $G$-actions, and the homology class $ \xi_{B} $ induces a morphism in $ \cd(B^{e}) $
$$ \xi_{B}\colon\Si^{n-2}\cs(B)^{\vee}\ra\Si B $$ which is also compatible with the $G$-actions.
Moreover, we have the following commutative diagram in $ \cd(A^{e}) $
\begin{equation}\label{equation:diagram S and B}
	\begin{tikzcd}
		\Si^{n-2}\cs(B)^{\vee}\ten_{B^{e}}A^{e}\arrow[r,"l"]\arrow[d,"\xi_{B}\ten\id"]&\Si^{n-2}\Xi\arrow[d,"\xi"]\\
		\Si B\ten_{B^{e}}A^{e}\arrow[r]&\Si A.
	\end{tikzcd}
\end{equation}

Therefore, the morphism $ \xi_{B} $ gives rise to a ‘deformation’
$$ \bm\Pi_{n-1}(B,\xi_{B}) $$ of $ \bm\Pi_{n-1}(B)=T_{B}(\Si^{n-2}\cs(B)^{\vee}) $, obtained by adding $ \xi_{B} $ to the differential of $ T_{B}(\Si^{n-2}\cs(B)^{\vee}) $; the morphism $ \xi $ gives rise to a ‘deformation’
$$ \bm\Pi_{n}(A,B,\xi) $$ of $ \bm\Pi_{n}(A,B)=T_{A}(\Si^{n-2}\Xi) $, obtained by adding $ \xi $ to the differential of $ T_{A}(\Si^{n-2}\Xi) $; and the commutative diagram above gives rise to a morphism
$$ \tilde{f}\colon\bm\Pi_{n-1}(B,\xi_{B})\ra\bm\Pi_{n}(A,B,\xi) .$$

Hence, by constructions, the dg algebras $\bm\Pi_{n-1}(B,\xi_{B})$ and $\bm\Pi_{n}(A,B,\xi)$ have natural $G$-actions. Moreover, the dg morphism $\tilde{f}$ is compatible with $G$-actions, i.e. $G$ acts on $\tilde{f}$. Therefore, we obtain a dg morphism between the corresponding skew group dg algebras
$$\tilde{f}*G\colon\bm\Pi_{n-1}(B,\xi_{B})*G\ra\bm\Pi_{n}(A,B,\xi)*G$$
given by
\begin{itemize}
	\item for each $g\in G$ and $b\in B$, we have $(\tilde{f}*G)(b*g)=f(b)*g$;
	\item for each $g\in G$ and $\alpha_1\ten\alpha_2\ten\cdots\ten\alpha_n$ in $(\Si^{n-1}\cs(B)^{\vee})^{\ten^{n}}$, we have $$(\tilde{f}*G)((\alpha_1\ten\alpha_2\ten\cdots\ten\alpha_n)*g)=(l(\alpha_1)\ten l(\alpha_2)\cdots\ten l(\alpha_n))*g $$ where $l$ is the map in (\ref{equation:diagram S and B}).
\end{itemize}

On the other hand, the action of $\alpha$ on $f$ also induces a dg morphism between the corresponding skew group dg algebras
$$f*G\colon B*G\ra A*G$$
given by
$$\hspace{1.8cm}b*h\mapsto f(b)*h. $$ 
If $\charc(k)\nmid\car(G)$, by \cite[Proposition 3.2.1]{LeMeur2020}, the skew group dg algebras $B*G$ and $A*G$ are smooth.

By the functoriality of the Hochschild homology, we have the following commutative diagram
\begin{equation*}\label{equation:2long exact sequences}
	\begin{tikzcd}
		\cdots\ra H\!H_{n-2}(B)\arrow[r]\arrow[d]& H\!H_{n-2}(A)\arrow[r]\arrow[d]& H\!H_{n-2}(f)\arrow[r,"\delta"]\arrow[d,"\phi"]& H\!H_{n-3}(B)\arrow[d,"\kappa"]\ra\cdots\\
		\cdots\ra H\!H_{n-2}(B*G)\arrow[r]& H\!H_{n-2}(A*G)\arrow[r]& H\!H_{n-2}(f*G)\arrow[r,"\omega"]& H\!H_{n-3}(B*G)\ra\cdots.
	\end{tikzcd}
\end{equation*}

\begin{Thm}\label{Thm:skew CY completion}
Let $G$ be a finite group. Let $ f\colon B\ra A $ be a dg morphism (not necessarily unital) between finitely cellular dg algebras such that $G$ acts on $f$. Suppose that $\charc(k)$ doesn't divide $\car(G)$. 
\begin{itemize}
	\item[(1)] The dg morphism
	$ \bm\Pi_{n}(B)*G\ra\bm\Pi_{n+1}(A,B)*G $ is equivalent to $$ \bm\Pi_{n}(B*G)\ra\bm\Pi_{n+1}(A*G),$$ i.e. the relative $ (n+1) $-Calabi--Yau completion of $ B*G\ra A*G $.
	\item[(2)] Let $\xi$ be an element of $\HH_{n-2}(f)^{G}$ and $ \xi_{B}=\delta^{G}(\xi)\in H\!H_{n-3}(B)^{G} $. Let $\varepsilon=\phi(\xi)$ and $\varepsilon_B=\omega(\varepsilon)$. Then the dg morphism
	$ \bm\Pi_{n}(B,\xi_B)*G\ra\bm\Pi_{n+1}(A,B,\xi)*G $ is equivalent to $ \bm\Pi_{n}(B*G,\varepsilon_B)\ra\bm\Pi_{n+1}(A*G,\varepsilon) $, i.e. the relative deformed $ (n+1) $-Calabi--Yau completion of $ B*G\ra A*G $ with respect to $\varepsilon\in H\!H_{n-2}(f*G)$. In particular, if $\xi$ lifts to $\HN_{n-2}(f)$, then $$\bm\Pi_{n}(B,\xi_{B})*G\ra\bm\Pi_{n+1}(A,B,\xi)*G$$ has a canonical left $(n+1)$-Calabi--Yau structure.
\end{itemize}

\end{Thm}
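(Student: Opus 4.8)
The plan is to show that the skew-group construction $-*G$ intertwines the formation of relative deformed Calabi--Yau completions, and then to obtain the Calabi--Yau structure by applying Theorem~\ref{Thm:Relative defomed CY completion has a canonical left CY} to $f*G\colon B*G\ra A*G$; recall that $B*G$ and $A*G$ are smooth by \cite[Proposition 3.2.1]{LeMeur2020}, which is where the hypothesis $\charc(k)\nmid\car(G)$ first enters. First I would record that part~(1) is the special case $\xi=0$ of part~(2): indeed $\bm\Pi_{n}(B)=\bm\Pi_{n}(B,0)$, $\bm\Pi_{n+1}(A,B)=\bm\Pi_{n+1}(A,B,0)$ and $\phi(0)=0=\omega(0)$, so the undeformed statement is subsumed, and it suffices to treat~(2).

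Next I would establish that $-*G$ commutes, up to quasi-isomorphism, with all the ingredients out of which $\bm\Pi_{n+1}(A,B)$ and $\bm\Pi_{n}(B)$ are built, namely $A$, $B$, the Cuntz--Quillen resolutions $\cs(A),\cs(B)$, their bimodule duals, the morphism $s_{f}^{\vee}\colon\cs(A)^{\vee}\ra\cs(B)^{\vee}\ten_{B^{e}}A^{e}$, its cone $\Xi$, and the relevant tensor algebras over $A$ and $B$. Concretely: (i) $\cs(A)*G$ is a cofibrant resolution of $A*G=\Lambda_{A}$ (equivariant Cuntz--Quillen paragraph), and similarly for $B$; (ii) $\cs(A)^{\vee}*G$ is a cofibrant perfect $\Lambda_{A}^{e}$-module quasi-isomorphic to $\RHom_{\Lambda_{A}^{e}}(\Lambda_{A},\Lambda_{A}^{e})$, by Proposition~\ref{Prop:Props in LeMeur}(4) together with (i), and similarly for $B$; (iii) for a dg $\triangle_{B}$-module $M$ (such as $\cs(B)^{\vee}$) one has $(M\ten_{B^{e}}A^{e})*G\simeq(M*G)\ten_{\Lambda_{B}^{e}}\Lambda_{A}^{e}$, via the isomorphism $M\ten_{\triangle_{B}}\Lambda_{B}^{e}\cong M*G$ of Proposition~\ref{Prop:Props in LeMeur}(2a), associativity, and $\triangle_{A}\cong A^{e}*G$; (iv) $-*G=-\ten_{k}k[G]$ is exact, hence commutes with $\cone$, so $\Xi*G$ is quasi-isomorphic to the cone $\cone(s_{f*G}^{\vee})$ attached to $f*G$ (cf.\ \cite[Section 3.6]{Wu2023}); and (v) $-*G$ is monoidal for the bimodule tensor product, $(M\ten_{A}N)*G\cong(M*G)\ten_{\Lambda_{A}}(N*G)$, again by Proposition~\ref{Prop:Props in LeMeur}(2a) and associativity, whence $(T_{A}M)*G\cong T_{\Lambda_{A}}(M*G)$ by summing over tensor powers. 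Feeding (i)--(v) (and the compatibility of the maps $r_{f},s_{f}^{\vee}$ from the Cuntz--Quillen paragraph) into the definitions yields $\bm\Pi_{n}(B)*G\simeq\bm\Pi_{n}(B*G)$ and $\bm\Pi_{n+1}(A,B)*G\simeq\bm\Pi_{n+1}(A*G)$, compatibly with $\tilde f*G$; this already proves~(1).

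For (2), recall that $\bm\Pi_{n+1}(A,B,\xi)$ is $\bm\Pi_{n+1}(A,B)$ with differential perturbed by a cofibrant model of the morphism in $\cd(A^{e})$ determined by $\xi\in\HH_{n-2}(f)^{G}$; since that class is $G$-invariant, the morphism may be chosen $G$-equivariant, i.e.\ as a morphism of $\triangle_{A}$-modules, which is precisely what the equivariant Cuntz--Quillen construction of Section~\ref{Section3} supplies (diagram~(\ref{equation:diagram S and B})). Applying $-*G$ produces a morphism in $\cd(\Lambda_{A}^{e})$ which, under the identification of $\HH_{\bullet}(f*G)$ with the corresponding Hom-space in $\cd(\Lambda_{A}^{e})$ (the $f*G$-analogue of the isomorphisms in Subsection~\ref{subsection:relative CY completion}), represents $\varepsilon=\phi(\xi)$ --- this is the statement that $\phi$ is the map on Hochschild homology induced by $-*G$ (equivalently by $\iota\colon A\ra\Lambda_{A}$), which I would check by a diagram chase from the functoriality of Hochschild homology and the compatibilities above. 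Likewise $\xi_{B}\mapsto\varepsilon_{B}$, and the commutative square of long exact sequences preceding the theorem gives $\varepsilon_{B}=\omega(\varepsilon)$, so the deformation data agree on both sides. Since adjoining a $G$-equivariant term to a differential commutes with $-*G$, I conclude $\bm\Pi_{n}(B,\xi_{B})*G\simeq\bm\Pi_{n}(B*G,\varepsilon_{B})$ and $\bm\Pi_{n+1}(A,B,\xi)*G\simeq\bm\Pi_{n+1}(A*G,\varepsilon)$, compatibly with the structure morphisms, i.e.\ $\tilde f*G$ is the relative deformed $(n+1)$-Calabi--Yau completion of $B*G\ra A*G$ with respect to $\varepsilon$. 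Finally, if $\xi$ lifts to $\HN_{n-2}(f)$, then its image under the natural map $\HN_{n-2}(f)\ra\HN_{n-2}(f*G)$ is a negative cyclic lift of $\varepsilon$ (naturality of $\HN\ra\HH$ with respect to $-*G$), and since $B*G\ra A*G$ is a morphism of smooth dg algebras, Theorem~\ref{Thm:Relative defomed CY completion has a canonical left CY} equips $\bm\Pi_{n}(B,\xi_{B})*G\ra\bm\Pi_{n+1}(A,B,\xi)*G$ with a canonical left $(n+1)$-Calabi--Yau structure.

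The main obstacle is the bookkeeping behind (i)--(v): verifying that $-*G$ intertwines bimodule duality, base change along $f$, cones and tensor algebras \emph{coherently with one another} and with the identifications $\triangle_{A}\cong A^{e}*G$, $\Lambda_{A}^{e}\simeq\triangle_{A}^{\car(G)}$. Point (ii) rests on comparing two a priori different cofibrant resolutions of $\Lambda_{A}$ and on Proposition~\ref{Prop:Props in LeMeur}(4); the monoidality (v) needs (iii) to be natural enough to iterate over all tensor powers; and $\charc(k)\nmid\car(G)$ is used throughout --- for the smoothness of $B*G,A*G$ and, via the retraction of Proposition~\ref{Prop:Props in LeMeur}(2b), to guarantee that the derived operations above are computed by the naive ones on the finitely cellular models in play. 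Once these formal compatibilities are in place, the deformation-matching in the preceding paragraph is a routine if notation-heavy diagram chase, and the Calabi--Yau structure is immediate from the cited theorem.
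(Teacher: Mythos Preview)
Your proposal is correct and follows essentially the same route as the paper: both arguments use the equivariant Cuntz--Quillen resolutions together with Proposition~\ref{Prop:Props in LeMeur} (in particular part~(4)) to show that $-*G$ commutes with the ingredients of the relative (deformed) Calabi--Yau completion, and then invoke Theorem~\ref{Thm:Relative defomed CY completion has a canonical left CY} for the Calabi--Yau structure. The only cosmetic difference is that where you argue abstractly via the compatibilities (i)--(v), the paper writes down the isomorphism $\Psi_{B}\colon\bm\Pi_{n}(B)*G\to\bm\Pi_{n}(B*G)$ by an explicit formula (sending $(\alpha_{1}\otimes\cdots\otimes\alpha_{m})*g$ to $(\alpha_{1}*e)\cdots(\alpha_{m-1}*e)\otimes(\alpha_{m}*g)$), which is precisely the map realising your point~(v); your reduction of~(1) to~(2) and your more detailed treatment of the deformation-matching in~(2) are slightly more explicit than what the paper records.
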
\label{Thm:G-CY completion}
\begin{proof}
The proof is similar in spirit to \cite[Theorem 3.5.4]{LeMeur2020}. 

\end{proof}

%
%
%
%
%
%
%

\subsection{Application to Ginzburg dg functors}\label{Section 3.5}

Let $Q=(Q_{0},Q_{1})$ be a finite graded $k$-quiver.  Recall that the space of potentials on $Q$ is the graded vector space

$$\frac{kQ\ten_{kQ^{e}_{0}}kQ_{0}}{\langle uv\ten e_{x}-(-1)^{\deg(u)\deg(v)}vu\ten e_{y}\,|\,u\in e_{x}kQe_{y}, v\in e_{y}kQe_{x}\,\text{homogeneous, $x,y\in Q_{0}$}\rangle}.$$

Therefore, potentials are expressed as linear combinations of oriented cycles in Q, with each cycle being considered up to cyclic permutation with Koszul-type signs. Moreover, a homogeneous potential of degree $-d$ on $Q$ can be viewed as an element of $\HH_d(kQ)$. Let $F=(F_{0},F_{1})$ be a graded subquiver of $Q$ (which may not be full).  Let $n$ be a positive integer. Let $W$ be a homogeneous potential on $Q$ of degree $3-n$.

\begin{Def}\rm\label{Def:Relative Ginzburg algebra}
	Let $ \widetilde{Q} $ be the graded quiver with the same vertices as $ Q $ and whose arrows are
	\begin{itemize}
		\item the arrows of $ Q $,
		\item an arrow $ a^{\vee}:j\to i $ of degree $ 2-n-|a| $ for each unfrozen arrow $ a\colon i\ra j $,
		\item a loop $ t_{i}:i\to i $ of degree $ 1-n $ for each unfrozen vertex $ i $.
	\end{itemize}
	
	Define the \emph{$n$-dimensional relative Ginzburg dg algebra} $ \bm\Gamma_{n}(Q,F,W) $ as the dg algebra whose underlying graded space is the completed graded path algebra $k\widetilde{Q} $. Its differential is the unique $ k $-linear continuous endomorphism of degree 1 which satisfies the Leibniz rule
	\begin{align*}
		\xymatrix{
			d(u\circ v)=d(u)\circ v+(-1)^{p}u\circ d(v)
		}
	\end{align*}
	for all homogeneous $ u $ of degree $ p $ and all $ v $ and takes the following values on the arrows of $ \widetilde{Q} $:
	\begin{itemize}
		\item $ d(a)=0 $ for each arrow $ a $ of $ Q $,
		\item $d(a^{\vee})=\partial_{a}W$ for each unfrozen arrow $ a $,
		\item $ d(t_{i})=e_{i}(\sum_{a\in Q_{1}}[a,a^{\vee}])e_{i} $ for each unfrozen vertex $ i $, where $ e_{i} $ is the lazy path corresponding to the vertex $ i $ and  $[\,,\,]$ denotes the supercommutator.
	\end{itemize}
\end{Def}

\begin{Def}\rm\label{Def: derived preprojective algebra}
Let $ \widetilde{F} $ be the graded quiver with the same vertices as $ F $ and whose arrows are
	\begin{itemize}
		\item the arrows of $ F $,
		\item an arrow $ \tilde{a}:j\to i $ of degree $ 3-n-|a|$ for each arrow $ a $ of $ F $,
		\item a loop $ r_{i}:i\to i $ of degree $ 2-n $ for each vertex $ i $ of $ F $.
	\end{itemize}
	Define \emph{derived preprojective algebra} $ \bm\Pi_{n-1}(F) $ as the dg algebra whose underlying graded space is the completed graded path algebra $ k\widetilde{F} $. Its differential is the unique $ k $-linear continuous endomorphism of degree 1 which satisfies the Leibniz rule
	\begin{align*}
		\xymatrix{
			d(u\circ v)=d(u)\circ v+(-1)^{p}u\circ d(v)
		}
	\end{align*}
	for all homogeneous $ u $ of degree $ p $ and all $ v $, and takes the following values on the arrows of $ \widetilde{F} $:
	\begin{itemize}
		\item $ d(a)=0 $ for each arrow $ a $ of $ F $,
		\item $d(\tilde{a})=0 $ for each arrow $ a $ in $ F $,
		\item $ d(r_{i})=e_{i}(\sum_{a\in F_{1}}[a,\tilde{a}])e_{i} $ for each vertex $ i $ of $ F $, where $ e_{i} $ is the lazy path corresponding to the vertex $ i $.
	\end{itemize}
\end{Def}

Let $c$ be the image of $W$ under Connes’ boundary in Hochschild homology $\HH_{n-3} (kQ) \ra\HH_{n-2}(kQ)$ (see~\cite[Section 6.1]{BK2011}). Let $ \pi\colon kF\hookrightarrow kQ $  be the canonical dg inclusion. Then $ \xi=(0,c) $ is an element of $ H\!H_{n-2}(G) $.  Via the deformed relative $n$-Calabi--Yau completion of $ G $ with respect to the class $ \xi $,  we get a dg functor
$$ \bm \tilde{\pi}\colon\bm\Pi_{n-1}(F)\ra\bm\Pi_{n}(kQ,kF,\xi) ,$$ 
which has a canonical left $ n $-Calabi--Yau structure (see \cite[Theorem 3.23]{WkY2016}\cite[Theorem 5.36]{BCS2020}).

\begin{Prop}\cite[Proposition 3.18]{Wu2023}\label{Prop: reduce CY}
	We have the following commutative diagram (up to homotopy) in the category of pseudocompact dg algebras
	\[
	\begin{tikzcd}
		\bm\Pi_{n-1}(F)\arrow[r,"\tilde{\pi}"]\arrow[dr,"\bm G_{rel}",swap]&\bm\Pi_{n}(kQ,kF,\xi)\\
		&\bm{\Gamma}_{n}(Q,F,W)\arrow[u,hook,"i"]
	\end{tikzcd}
	\]
	Where $ i $ is a quasi-equivalent dg inclusion and $ \bm G_{rel} $ is given explicitly as follows:
	\begin{itemize}
		\item $ \bm G_{rel}(i)=i $ for each frozen vertex $ i\in F_{0} $,
		\item $ \bm G_{rel}(a)=a $ for each arrow $ a\in F_{1} $,
		\item $ \bm G_{rel}(\tilde{a})=-\partial_{a}W $ for each arrow $ a\in F_{1} $,
		\item $ \bm G_{rel}(r_{i})=e_{i}(\sum_{a\in Q_{1}\setminus F_{1}}[a,a^{\vee}])e_{i} $ for each frozen vertex $ i\in F_{0} $.
	\end{itemize}
	Thus, the functor $ \bm G_{rel}: \bm\Pi_{n-1}(F)\ra \bm{\Gamma}_{n}(Q,F,W) $ has a canonical left $ n $-Calabi--Yau structure.
\end{Prop}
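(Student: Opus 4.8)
The plan is to establish the commutative triangle by a direct diagram chase, using the explicit model of the deformed relative Calabi--Yau completion described in Subsection~\ref{subsection:relative CY completion}. First I would set up the semi-free models: take $\bm\Pi_{n-1}(F)=T_{kF}(\Si^{n-2}\cs(kF)^{\vee})$ with the deformed differential coming from $\xi_{B}=\delta(\xi)$, and take $\bm\Pi_{n}(kQ,kF,\xi)=T_{kQ}(\Si^{n-2}\Xi)$ with the deformed differential coming from $\xi$, where $\Xi=\cone(s_{\iota}^{\vee}\colon\cs(kQ)^{\vee}\ra\cs(kF)^{\vee}\ten_{(kF)^{e}}(kQ)^{e})$. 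Next I would unwind the bases from the Remarks in Subsection~\ref{subsection:relative CY completion}: $\cs(kF)^{\vee}$ has basis $\{f^{\vee}\mid f\in F_{1}\}\cup\{c_{x}\mid x\in F_{0}\}$, and $\Xi$ has basis given by $\{a^{\vee}\mid a\in Q_{1}\}$ (the duals of all arrows) together with the loops $\{t_{i}\mid i\in Q_{0}\setminus F_{0}\}$ coming from the cone direction and $\{c_{x}\mid x\in F_{0}\}$; after the degree shift $\Si^{n-2}$ one checks these acquire exactly the degrees $2-n-|a|$, $1-n$ and $2-n$ appearing in Definition~\ref{Def:Relative Ginzburg algebra}. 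This identifies the underlying graded algebra of $\bm\Pi_{n}(kQ,kF,\xi)$ with $k\widetilde{Q}$ up to completion, giving the candidate inclusion $i\colon\bm\Gamma_{n}(Q,F,W)\hookrightarrow\bm\Pi_{n}(kQ,kF,\xi)$, and one verifies $i$ is a quasi-isomorphism by noting it is the identity on objects and surjective on the relevant homology (the extra generators of the target that are not in the image are contractible, as the cone $\Xi$ is quasi-isomorphic to $\Si\ker(s_{\iota}^{\vee})$).

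The heart of the argument is then to compute the differentials explicitly and match them. On the Ginzburg side the differential sends $a^{\vee}\mapsto\partial_{a}W$ and $t_{i}\mapsto e_{i}(\sum_{a}[a,a^{\vee}])e_{i}$; I need to check that under the deformation these agree with $i\circ\bm G_{rel}$. The key input is that $c=B(W)$ is Connes' boundary of $W$, so that the induced morphism $\xi\colon\Si^{n-2}\Xi\ra\Si A$ in $\cd(A^{e})$, when written on generators, contributes precisely the terms $\partial_{a}W$ to $d(a^{\vee})$; this is the relative analogue of the well-known fact (e.g.\ \cite{Keller2011,BK2011}) that the Calabi--Yau completion of $kQ$ deformed by $B(W)$ is the Ginzburg dg algebra. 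Then one defines $\bm G_{rel}$ by the four formulas in the statement and checks compatibility with differentials term by term: $d(a)=0$ is immediate; $d(\tilde a)=0$ must map to $d(-\partial_{a}W)=0$, which holds because $\partial_{a}W$ is a cycle in $\bm\Gamma_{n}$ exactly when $W$ is a potential (this uses the cyclic-invariance built into the space of potentials); and $d(r_{i})=e_{i}(\sum_{a\in F_{1}}[a,\tilde a])e_{i}$ must map to $e_{i}(\sum_{a\in Q_{1}\setminus F_{1}}[a,a^{\vee}])e_{i}$ under $\bm G_{rel}$ combined with $d$, which is the most delicate bookkeeping: substituting $\tilde a\mapsto-\partial_{a}W$ and applying $d$ in $\bm\Gamma_{n}$ produces, via the Leibniz rule and the relation $d(t_{j})=e_{j}(\sum[a,a^{\vee}])e_{j}$, exactly the sum over unfrozen arrows after the commutators over frozen arrows cancel. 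Finally, the last sentence — that $\bm G_{rel}$ has a canonical left $n$-Calabi--Yau structure — follows formally: $\tilde G$ has one by Theorem~\ref{Thm:Relative defomed CY completion has a canonical left CY} (since $\xi=(0,c)$ with $c=B(W)$ lifts canonically to negative cyclic homology, being in the image of Connes' $B$), and the quasi-isomorphism $i$ transports this structure along the commutative triangle.

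The main obstacle I anticipate is the verification of the $d(r_{i})$ formula: tracking the signs and the precise index ranges when one passes from the abstract cone description of $\Xi$ to the concrete loop generators $t_{i}$ and $r_{i}$, and checking that the supercommutator terms over frozen arrows that appear in $d(\bm G_{rel}(r_i))$ really do telescope to leave only the unfrozen contributions, requires care with the Koszul signs in the definition of the enveloping algebra and of the potential space. A secondary subtlety is completeness: all of this must be carried out in the category of pseudocompact dg algebras, so one should either work with the completed tensor algebras throughout or check that the relevant morphisms are continuous for the adic topologies; this is routine but must be stated. Everything else — the identification of underlying graded algebras, the quasi-isomorphy of $i$, and the transport of the Calabi--Yau structure — is formal given the results already recalled in Sections~\ref{Section2} and~\ref{subsection:relative CY completion}.
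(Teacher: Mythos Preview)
The paper does not prove this proposition; it is quoted from \cite[Proposition 3.18]{Wu2023} without argument, so there is no in-paper proof to compare against. Your sketch is the natural reconstruction of how such a statement is established: build the explicit semi-free models from the Cuntz--Quillen resolutions, identify $\bm\Gamma_{n}(Q,F,W)$ with the sub-dg-algebra of $T_{kQ}(\Si^{n-2}\Xi)$ corresponding to $\Si\ker(s_{\iota}^{\vee})$ (using the Remark in Subsection~\ref{subsection:relative CY completion} that $\Xi\simeq\Si\ker(s_{\iota}^{\vee})$), check that deforming by $\xi=(0,B(W))$ reproduces the Ginzburg differential on generators, verify that the four formulas defining $\bm G_{rel}$ are compatible with the differentials, and finally transport the left $n$-Calabi--Yau structure from Theorem~\ref{Thm:Relative defomed CY completion has a canonical left CY} along the quasi-isomorphism $i$. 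This is the standard route and matches what the cited reference does.

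One small inaccuracy worth flagging: your enumerated basis for $\Xi$ mixes the description of the full cone with that of the quasi-isomorphic shifted kernel. The full cone $\Xi=\cone(s_{\iota}^{\vee})$ has generators coming from \emph{both} $\cs(kQ)^{\vee}$ (shifted) and $\cs(kF)^{\vee}\ten_{(kF)^{e}}(kQ)^{e}$, so in particular it contains duplicate copies of the frozen-arrow duals and frozen-vertex loops; it is only after passing to the quasi-isomorphic $\Si\ker(s_{\iota}^{\vee})$ that one gets exactly the generators $\{a^{\vee}\mid a\in Q_{1}\setminus F_{1}\}\cup\{t_{i}\mid i\in Q_{0}\setminus F_{0}\}$ matching Definition~\ref{Def:Relative Ginzburg algebra}. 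This does not affect the overall strategy, but keeping the two models straight is exactly what makes the sign/index bookkeeping in the $d(r_{i})$ check go through cleanly.
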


We call $ \bm G_{rel}\colon\bm\Pi_{n-1}(F)\ra\bm{\Gamma}_{n}(Q,F,W) $ the \emph{$n$-dimensional Ginzburg functor} associated with $ (Q,F,W) $. 

\bigskip

Let $G$ be a finite group such that $\charc(k) \nmid\mathrm{Card}(G)$. We make the following assumption.
\begin{assumption}\label{ass1}\ 
	
	\begin{itemize}
		\item $G$ acts on $kQ$ by degree preserving automorphisms in such a way that both the set of vertices and the graded vector space generated by the arrows of $Q$ are stabilized by the action, and $F=(F_{0},F_{1})$ is also stable under the $G$-action. And the two $G$-actions are compatible under the inclusion $F\subseteq Q$.
		\item  $W$ is $G$-invariant up to cyclic permutation.
	\end{itemize}
\end{assumption}


\begin{Prop}\label{Prop: action on relative Ginzburg alg}\cite[Lemma 4.3.1]{LeMeur2020} Assume that $W$ is $G$-invariant up to cyclic permutation. Then the $n$-dimensional Ginzburg functor $ G_{rel}\colon\bm{\Pi}_{n-1}(kF)\ra\bm{\Gamma}_{n}(Q,F,W) $ is $G$-equivariant.  
\end{Prop}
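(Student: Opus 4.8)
The plan is to verify $G$-equivariance of $\bm G_{rel}$ directly on generators, using the explicit formulas for $\bm G_{rel}$ from Proposition \ref{Prop: reduce CY} together with the fact that the $G$-action on $kQ$ preserves the arrow span and on $\bm{\Gamma}_n(Q,F,W)$ is the action induced by functoriality of the relative Ginzburg construction. The key observation is that Assumption \ref{ass1} (and its weakening in the hypothesis) guarantees that the class $c=B(W)\in \HH_{n-2}(kQ)$ is $G$-invariant, so $\xi=(0,c)\in\HH_{n-2}(G)$ lies in the invariant subspace $\HH_{n-2}(G)^G$; hence by the constructions of Section \ref{Section3} the deformed relative $n$-Calabi--Yau completion $\tilde G\colon\bm\Pi_{n-1}(F)\ra\bm\Pi_n(kQ,kF,\xi)$ carries a canonical $G$-action and is $G$-equivariant. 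Since $\bm G_{rel}$ is (up to homotopy) the composite of $\tilde G$ with the quasi-equivalence $i$ of Proposition \ref{Prop: reduce CY}, it suffices to check that this homotopy-commutative identification is compatible with the $G$-actions, and then to pin down the action on $\bm{\Gamma}_n$ concretely.

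The concrete steps I would carry out: First, describe the induced $G$-action on $\bm{\Gamma}_n(Q,F,W)$ explicitly — $g$ sends each arrow $a$ of $Q$ to $^g a$ (a linear combination of arrows), each dual arrow $a^\vee$ to the corresponding combination of the $b^\vee$, and each loop $t_i$ to $^g t_i = t_{g\cdot i}$ (up to the scalars coming from the action on the arrow span); one checks this commutes with $d$ using that $W$ is $G$-invariant up to cyclic permutation, so that $\partial_a W$ transforms correctly and $\sum_a [a,a^\vee]$ is $G$-stable. Second, on $\bm\Pi_{n-1}(kF)$ describe the analogous action, which exists because $F$ is $G$-stable and $\xi_B=\delta^G(\xi)$ lies in $\HH_{n-3}(kF)^G$. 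Third, verify the four defining formulas for $\bm G_{rel}$ are $G$-equivariant: $\bm G_{rel}(^g i)=\,^g i=\,^g\bm G_{rel}(i)$ is immediate on frozen vertices; $\bm G_{rel}(^g a)=\,^g a$ on frozen arrows is immediate; for $\bm G_{rel}(\tilde a)=-\partial_a W$ one needs $-\partial_{^g a}W = \,^g(-\partial_a W)$, which follows from $G$-invariance of $W$ up to cyclic permutation (the cyclic ambiguity does not affect the noncommutative derivative of a potential); and for $\bm G_{rel}(r_i)=e_i(\sum_{a\in Q_1\setminus F_1}[a,a^\vee])e_i$ one needs the $G$-stability of the unfrozen arrow span together with the compatibility of the $Q$- and $F$-actions under $F\subseteq Q$, so that $^g$ maps $e_i(\sum[a,a^\vee])e_i$ to $e_{g\cdot i}(\sum[b,b^\vee])e_{g\cdot i}$.

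The main obstacle, and the place I would spend the most care, is the third step in the case of the loops $r_i$: one must make sure that the bookkeeping of signs and scalars in the $G$-action on the dual arrows $a^\vee$ (which is dual to the action on the $a$, hence involves the inverse-transpose of the matrix by which $g$ acts on the arrow span) is exactly the one for which $\sum_{a}[a,a^\vee]$ is genuinely $G$-invariant — this is the Calabi--Yau pairing being $G$-equivariant — and then that the restriction of this sum to unfrozen arrows is still invariant because the unfrozen arrow span is a $G$-subrepresentation complementary to the frozen one. This is essentially the content of \cite[Lemma 4.3.1]{LeMeur2020}, so I would either invoke that lemma directly or reproduce its short verification. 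Once the action on $\bm{\Gamma}_n$ is correctly normalized so that it agrees, under the quasi-equivalence $i$, with the $G$-action on $\bm\Pi_n(kQ,kF,\xi)$ coming from $\xi\in\HH_{n-2}(G)^G$, the equivariance of $\bm G_{rel}$ follows formally from the equivariance of $\tilde G$ and the homotopy commutativity of the triangle in Proposition \ref{Prop: reduce CY}.
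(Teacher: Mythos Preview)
Your proposal is correct and is essentially the approach the paper takes: the paper's own proof is a single sentence deferring to the discussion in \cite[Section 4.3]{LeMeur2020}, and what you have written is precisely an unpacking of that discussion---verifying the $G$-equivariance of $\bm G_{rel}$ on generators via the explicit formulas of Proposition~\ref{Prop: reduce CY}, with the key technical point being the correct normalization of the $G$-action on the dual arrows $a^\vee$ so that $\sum_a [a,a^\vee]$ is $G$-invariant. The conceptual layer you add (observing that $\xi=(0,c)\in\HH_{n-2}(G)^G$ so that the abstract machinery of Section~\ref{Section3} yields equivariance of $\tilde G$, and then descending along the quasi-equivalence $i$) is a welcome clarification of why the explicit check must succeed, but the paper is content simply to cite Le~Meur.
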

\begin{proof}
	This follows from a similar discussion in \cite[Section 4.3]{LeMeur2020}.
\end{proof}



\begin{Def}\rm\cite{LD2010}\cite[Deﬁnition 4.4.1]{LeMeur2020}\label{Def: Q_{G} and F_{G}}
Let $[G\backslash Q_0]$ be a complete set of representatives of the $G$-orbits of vertices of $Q$. For each $i \in Q_0$, denote by $G_i$ the stabilizer of $i$ and let $[G/G_i]$ be a complete set of representatives of the cosets of $G$ modulo $G_i$. Finally, for all $i \in Q_0$, let $\text{irr}(G_i)$ be a complete set of representatives of the isomorphism classes of the irreducible representations of $G_i$; it is convenient to assume that $\rho = kG_i e_\rho$ for some primitive idempotent $e_\rho$ of $kG_i$, for all $\rho \in \text{irr}(G_i)$.
\begin{itemize}
	\item[(1)] Let $\epsilon$ be the following idempotent of $kQ*G$
	
	\[\epsilon = \sum_{i \in [G\backslash Q_0], \rho \in \text{irr}(G_i)} e_i * e_\rho\,,\]
	
	where $e_i$ ranges over a complete set of representatives of the $G$-orbits of vertices of $Q$ and $\rho$ ranges over a complete set of representatives of the isomorphism classes of the irreducible representations of $G_i$.
	\item[(2)] Let $Q_{G}$ be the quiver defined as follows
	\subitem$\star$ Its vertices are the pairs $(i, \rho)$ where $i \in [G\backslash Q_0]$ and $\rho \in \text{irr}(G_i)$.
	\subitem$\star$ For all vertices $(i, \rho)$ and $(j, \tau)$, denote by $M(i, j; \tau)$ the following vector subspace of $kQ*G$
	
	\[ M(i, j; \tau) = \bigoplus_{y\in[G/G_{j}]} (e_i (kQ_1) e_{y\cdot j}) * (y (kG_j) e_\tau). \]
	Note that $kQ*G$ is a representation of $G$ through left multiplication and, by restriction, it is a representation of $G_i$; then $M(i, j; \tau)$ is a sub-representation of this left $G_i$-module. 
	\subitem$\star$ The vector space spanned by the set of arrows in $Q_{G}$ from $(i, \rho)$ to $(j, \tau)$ is $\text{Hom}_{G_i}(\rho, M(i, j; \tau))$.
\end{itemize}

By \cite[Section 4.4]{LeMeur2020}, there exists an isomorphism of algebras
\[ \gamma_{Q}\colon kQ_{G} \iso \epsilon (kQ*G) \epsilon \]
given as follows
\begin{itemize}
	\item For all vertices $(i,\rho)$ of $Q_{G}$, the corresponding idempotent of $kQ_{G}$ is mapped onto $e_i * e_\rho$.
	\item For all vertices $(i, \rho)$ and $(j,\tau)$ of $Q_{G}$, and for all $f \in \text{Hom}_{G_i}(\rho, M(i, j; \tau))$, then $f$ is mapped onto $f(e_\rho)$. The arrow $f : (i, \rho) \rightarrow (j,\tau)$ of $Q_{G}$ can be identified with the corresponding element $f(e_\rho)$ of $(e_i * e_\rho) \cdot (kQ*G) \cdot (e_j * e_\tau)$.
\end{itemize}

\bigskip

Similarly, Let $\theta$ be the following idempotent of $kF*G$,

\[\theta = \sum_{i \in [G\backslash F_0], \rho \in \text{irr}(G_i)} e_i * e_\rho.\]

Let $F_{G}$ be the quiver defined as follows
\begin{itemize}
	\item[$\star$] Its vertices are the pairs $(i, \rho)$ where $i \in [G\backslash F_0]$ and $\rho \in \text{irr}(G_i)$.
	\item[$\star$] For all vertices $(i, \rho)$ and $(j, \tau)$, denote by $N(i, j; \tau)$ the following vector subspace of $kF*G$:
	
	\[ N(i, j; \tau) = \bigoplus_{y\in[G/G_{j}]} (e_i (kF_1) e_{y\cdot j})*(y (kG_j) e_\tau). \] 
	\item[$\star$] The vector space spanned by the set of arrows in $Q_{G}$ from $(i, \rho)$ to $(j, \tau)$ is $\text{Hom}_{G_i}(\rho, N(i, j; \tau))$.
\end{itemize}

\end{Def}

\bigskip

By \cite[Section 4.4]{LeMeur2020}, we also have an isomorphism of algebras
\begin{equation}\label{mor:gammaF}
	\gamma_{F}\colon kF_{G} \iso \theta (kF*G) \theta
\end{equation}

given as follows
\begin{itemize}
	\item For all vertices $(i,\rho)$ of $F_{G}$, the corresponding idempotent of $kF_{G}$ is mapped onto $e_i * e_\rho$.
	\item For all vertices $(i, \rho)$ and $(j,\tau)$ of $F_{G}$, and for all $f \in \text{Hom}_{G_i}(\rho, N(i, j; \tau))$, then $f$ is mapped onto $f(e_\rho)$. The arrow $f\colon(i, \rho) \rightarrow (j,\tau)$ of $F_{G}$ can be identified with the corresponding element $f(e_\rho)$ of $(e_i * e_\rho) \cdot (kQ*F) \cdot (e_j * e_\tau)$.
\end{itemize}

It is not hard to see that $F_{G}$ is a graded sub-quiver of $Q_{G}$. And the inclusion of algebras
$$\iota\colon kF_{G}\hookrightarrow kQ_{G}$$
induces the following commutative diagram
\begin{equation}\label{comm-digram}
	\begin{tikzcd}
		kF_{G}\arrow[r,hookrightarrow,"\iota"]\arrow[d,"\simeq"]&kQ_{G}\arrow[d,"\simeq"]\\
		\theta (kF*G)\arrow[d,hookrightarrow] \theta\arrow[r,hookrightarrow]&\epsilon (kQ*G) \epsilon\arrow[d,hookrightarrow]\\
		kF*G\arrow[r,hookrightarrow]&kQ*G.
	\end{tikzcd}
\end{equation}

\begin{Lem}\cite[Lemma 4.4.2]{LeMeur2020}\label{Lem:W_{G}}
	Under Assumption \ref{ass1}, there exists a homogeneous potential $W_G$ of degree $3-n$ on $Q_{G}$ such that the image of $W$ under the mapping $\HH_{n-3}(kQ) \to \HH_{n-3}(kQ*G)$ induced by the natural embedding $kQ \to kQ*G$ is equal to the image of $W_{G}$ under the isomorphism $\HH_{n-3}(kQ_{G}) \to \HH_{n-3}(kQ*G)$ induced by $kQ_{G} \to kQ*G$.
\end{Lem}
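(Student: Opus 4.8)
The plan is to obtain $W_G$ by transporting $W$ across the Morita equivalence between $kQ*G$ and its corner algebra $\epsilon(kQ*G)\epsilon\cong kQ_G$, and then to check that this transport carries potentials to potentials. The first and essentially only substantial step is to show that $\epsilon$ (and likewise $\theta$) is a \emph{full} idempotent of $kQ*G$, that is, $(kQ*G)\epsilon(kQ*G)=kQ*G$. This is precisely where Assumption~\ref{ass1} and the hypothesis $\charc(k)\nmid\car(G)$ enter: each stabilizer group algebra $kG_i$ is then semisimple, so the two-sided ideal generated by the primitive idempotents $e_\rho$, $\rho\in\mathrm{irr}(G_i)$, is all of $kG_i$ (these idempotents meet every simple factor). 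Using that ${}^{h}e_i=e_i$ for $h\in G_i$, one deduces $e_i*\mathbf{1}\in(kQ*G)\big(\textstyle\sum_{\rho}e_i*e_\rho\big)(kQ*G)$ for every $i\in[G\backslash Q_0]$; since any vertex of $Q$ is $G$-conjugate to one in $[G\backslash Q_0]$ and conjugation by the units $\mathbf{1}*g$ is inner in $kQ*G$, the same holds for all $i\in Q_0$, whence $\mathbf{1}\in(kQ*G)\epsilon(kQ*G)$.

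Granting this, the non-unital algebra map $kQ_G\xrightarrow{\gamma_Q}\epsilon(kQ*G)\epsilon\hookrightarrow kQ*G$ appearing in diagram~(\ref{comm-digram}) is a Morita equivalence, and it is compatibly graded because the $G$-action preserves degrees (so $\epsilon$ is homogeneous of degree $0$). By Morita invariance of Hochschild homology, the induced map $\HH_{n-3}(kQ_G)\to\HH_{n-3}(kQ*G)$ is a degree-preserving isomorphism, namely the one named in the statement. In Hochschild (bar) degree $0$ this isomorphism is concretely the map $\epsilon A\epsilon/[\epsilon A\epsilon,\epsilon A\epsilon]\to A/[A,A]$, $A:=kQ*G$, induced by the plain inclusion, which is bijective because $\epsilon$ is full: the off-diagonal corners $\epsilon A(1-\epsilon)$ and $(1-\epsilon)A\epsilon$ die modulo commutators, and fullness lets one push $(1-\epsilon)A(1-\epsilon)$ inside $\epsilon A\epsilon$.

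Now recall that the space of homogeneous potentials of degree $3-n$ on a quiver is by definition the degree-$(3-n)$ part of the cyclic quotient $kQ/[kQ,kQ]$ (super-commutators); this is the part of Hochschild homology carried by chains of bar degree $0$, and the arrow in the statement is the natural map from this subspace into $\HH_{n-3}(kQ)$. The same holds for $Q_G$ and for $kQ*G$. I would therefore define $W_G$ by pushing $W$ forward along $kQ\hookrightarrow kQ*G$, $a\mapsto a*\mathbf{1}$, reducing modulo (super-)commutators, and pulling the resulting class back along the isomorphism $kQ_G/[kQ_G,kQ_G]\iso(kQ*G)/[kQ*G,kQ*G]$ of the previous paragraph: this yields a unique homogeneous potential $W_G$ of degree $3-n$ on $Q_G$ whose class in $(kQ*G)/[kQ*G,kQ*G]$ equals that of $W$. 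Functoriality of the passage from bar degree $0$ into total Hochschild homology then gives that the images of $W$ and of $W_G$ in $\HH_{n-3}(kQ*G)$ coincide, which is the assertion.

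The main obstacle is genuinely the full-idempotent claim of the first paragraph: it is the only place where the semisimplicity of the $kG_i$ and the inclusion of all irreducible representations of each stabilizer in $\epsilon$ are needed, and it is exactly what turns the corner inclusion into a Morita equivalence; everything after it is naturality of Hochschild homology together with the identification of potentials with Hochschild classes of bar degree $0$. One could equally run the whole argument at the level of the mixed (or negative cyclic) complex, using Morita invariance of cyclic and negative cyclic homology, and simply observe that the equivalence matches the subspaces of potentials; this refinement is convenient for the later step in which $W_G$ is lifted to negative cyclic homology.
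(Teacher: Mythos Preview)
Your proposal is correct and follows essentially the same route as the paper (which defers the proof to \cite[Lemma 4.4.2]{LeMeur2020} but records the key computation in the subsequent Remark). Both arguments rest on the fact that $\epsilon$ is a full idempotent of $kQ*G$, so that $kQ_G\cong\epsilon(kQ*G)\epsilon\hookrightarrow kQ*G$ is a Morita equivalence, and then transport $W$ through the induced isomorphism on the cyclic quotients $A/[A,A]$; the paper's Remark simply makes your abstract ``pull back along $\epsilon A\epsilon/[\epsilon A\epsilon,\epsilon A\epsilon]\iso A/[A,A]$'' explicit via the formula $W\equiv\sum_{i\in I}\epsilon_i\big(\sum_{\alpha(j)=i}\pm b_jWa_j\big)\epsilon_i$, where $\epsilon_j=a_jb_j$ and $\epsilon_{\alpha(j)}=b_ja_j$ realize the Morita matching of indecomposable projectives.
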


\begin{Rem}\cite[Lemma 4.4.2, Section 4.5]{LeMeur2020}
The computation of a potential $W_G$ such as above is made in two steps
	\begin{itemize}
		\item[(Step 1)] Express $W_G$ as an element of $\epsilon (kQ*G) \epsilon$. In fact, there exists a complete family $(\epsilon_{i})_{j\in J}$ of primitive pairwise orthogonal idempotents of $kQ*G$ and there exists a subset $I\subseteq J$ such that
		
		\begin{itemize}
			\item[$\bullet$] the image of the map $kQ_{G} \to kQ * G$ is equal to $\epsilon (kQ \ast G) \epsilon$, where $\epsilon$ denotes $\sum_{i \in I} \epsilon_i$, and
			\item[$\bullet$] for all $j \in J$, there exists a unique $\alpha(j) \in I$ such that $\epsilon_j \cdot (kQ \ast G) \cong \epsilon_{\alpha(j)} (kQ * G)$ as graded $kQ*G$-modules.
		\end{itemize}
		For all $j\in J$, there exist homogeneous $a_{j}, b_{j}\in kQ*G$ such that $\epsilon_{j}=a_{j}b_{j}$ and $\epsilon_{\alpha(j)}= b_{j} a_{j}$. Then 
		\begin{eqnarray*}
			\begin{split}
				W=&\id_{kQ*G}\cdot W\cdot\id_{kQ*G}\\
				=&\sum_{j \in J} \epsilon_j W \epsilon_j \\
				=&\sum_{j \in J} a_j b_j W a_j b_j a_j b_j \\
				=&\sum_{i \in I} \epsilon_i (\sum_{\substack{j\,\text{s.t.}\alpha(j) = i}}\pm b_j W a_j) \epsilon_i,
			\end{split}
		\end{eqnarray*}
	where the sign is $(-1)^{\deg(b_{j})\cdot(\deg(a_{j})+\deg(W))}$.		
		
		\item[(Step 2)] Express the result of the first step as a linear combination of paths in $Q_{G}$.
	\end{itemize}
\end{Rem}

\begin{Rem}
The ice quiver $(Q_{G},F_{G})$ itself can be computed for any finite group using the work of \cite{LD2010}. In the case where the group is of order $2$, the potential $W_{G}$ was computed in \cite{Amiot-Plamondon2018} where they used this to describe the cluster category of a triangulated surface with punctures. More recently, $W_G$ was computed in \cite{GP2019} for $G$ any cyclic group under some assumptions on the action. For any finite abelian group $G$, \cite{GPP2019} gave an explicit construction of the potential $W_G$ as a linear combination of cycles in $Q_G$. Finally, an algorithm to compute $W_{G}$ for any finite group was given in \cite{LeMeur2020b}.

\end{Rem}
%
%
%
%

\begin{Prop}\cite[Corollary 4.4.3]{LeMeur2020}\label{Prop: Lemeur-Morita}
	Under Assumption \ref{ass1}, we have
	\begin{itemize}
		\item[(1)] The actions of $G$ on $kQ$ and $kF$ extend to an action of $G$ on $\bmgamma_{n}(Q,F,W)$ by dg automorphisms.
		\item[(2)] The action of $G$ on $kF$ extends to an action of $G$ on $\bm{\Pi}_{n-1}(F)$ by dg automorphisms. And the Ginzburg functor $ \bm G_{rel}\colon\bm\Pi_{n-1}(F)\ra\bm{\Gamma}_{n}(Q,F,W) $ is $G$-equivariant.
		\item[(3)] The induced functor $\bm{\Pi}_{n-1}(F)*G\ra\bmgamma_{n}(Q,F,W)*G$ has a canonical left $n$-Calabi--Yau structure.
		
		\item[(4)] The morphisms $kQ_{G}\xrightarrow{\simeq}{\epsilon(kQ*G)\epsilon}\hookrightarrow kQ*G$ and $kF_{G}\xrightarrow{\simeq}{\theta(kF*G)\theta}\hookrightarrow kF*G$ extend to (non-unital) dg algebra homomorphisms
		
		\[\bm{\Pi}_{n-1}(F_{G}) \rightarrow \bm{\Pi}_{n-1}(F)*G \]
		and
		\[\bmgamma_{n}(Q_{G},F_{G},W_G) \rightarrow \bmgamma_{n}(Q,F,W)*G \]
		such that we have the following commutative diagram
		\[
		\begin{tikzcd}
			\bm{\Pi}_{n-1}(F_{G})\arrow[r]\arrow[d]&\bmgamma_{n}(Q_{G},F_{G},W_G)\arrow[d]\\
			\bm{\Pi}_{n-1}(F)*G\arrow[r]&\bmgamma_{n}(Q,F,W)*G
		\end{tikzcd}
		\]
		and
		the restriction-of-scalars functors induce equivalences of triangulated categories
		\[ \cd(\bmgamma_{n}(Q,F,W)*G)\simeq\cd(\bmgamma_{n}(Q_{G},F_{G},W_G)) \]
		and 
		\[ \cd(\bm{\Pi}_{n-1}(F)*G)\simeq\cd(\bm{\Pi}_{n-1}(F_{G})) .\]
	\end{itemize}
\end{Prop}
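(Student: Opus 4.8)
The plan is to derive (1)--(3) directly from the equivariant Calabi--Yau machinery of Section~\ref{Section3}, and to reduce (4) to an explicit identification of two corner dg algebras, for which Lemma~\ref{Lem:W_{G}} is the decisive input. For (1) and (2) I would extend the $G$-action on $kQ$ (resp.\ $kF$) to the quiver $\widetilde{Q}$ of Definition~\ref{Def:Relative Ginzburg algebra} (resp.\ $\widetilde{F}$) by letting $g\in G$ act on the added arrows $a^{\vee}$ through the representation contragredient to its action on the unfrozen arrows of $Q$, and on the loops $t_i$ (resp.\ $r_i$) through the induced permutation of vertices; since $W$ is $G$-invariant up to cyclic permutation, the defining values $d(a^{\vee})=\partial_a W$ and $d(t_i)=e_i(\sum[a,a^{\vee}])e_i$ are carried to the analogous values by each $g$, so the Ginzburg (resp.\ derived preprojective) differential is $G$-equivariant; this is exactly what underlies Proposition~\ref{Prop: action on relative Ginzburg alg}. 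For (3) I would invoke Proposition~\ref{Prop: reduce CY}: up to the quasi-equivalence $i$, the functor $\bm G_{rel}\colon\bm\Pi_{n-1}(F)\ra\bmgamma_n(Q,F,W)$ is the deformed relative $n$-Calabi--Yau completion $\tilde G$ of $kF\hookrightarrow kQ$ with respect to $\xi=(0,c)$, $c=B(W)$, and this class has a negative cyclic lift coming from $W$ (as in the proof of Theorem~\ref{Thm:Relative defomed CY completion has a canonical left CY}). Since $\charc(k)\nmid\car(G)$ and the whole package --- the morphism, its Cuntz--Quillen resolutions, the class $\xi$ --- is $G$-equivariant, Theorem~\ref{Thm:skew CY completion}(2) applies and produces the canonical left $n$-Calabi--Yau structure on $\bm\Pi_{n-1}(F)*G\ra\bmgamma_n(Q,F,W)*G$.

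For (4), I would first rewrite the situation using Theorem~\ref{Thm:skew CY completion}: the morphism $\bm\Pi_{n-1}(F)*G\ra\bmgamma_n(Q,F,W)*G$ is, up to quasi-equivalence, the deformed relative $n$-Calabi--Yau completion of $f*G\colon kF*G\ra kQ*G$ with respect to $\varepsilon=\phi(\xi)$. The idempotents $\theta\in kF*G$ and $\epsilon\in kQ*G$ are full: by Proposition~\ref{Prop: cohomology of skew}, $H^0(kF*G)=kF*G$ and $H^0(kQ*G)=kQ*G$, and by the structure theory of skew group algebras the family $\{e_i*e_\rho\}$ exhausts the isomorphism classes of simple modules, so $\theta(kF*G)\theta\cong kF_{G}$ and $\epsilon(kQ*G)\epsilon\cong kQ_{G}$ (the middle and bottom rows of diagram~(\ref{comm-digram})) are smooth basic algebras, derived equivalent to $kF*G$ and $kQ*G$ via restriction of scalars along the corner inclusions. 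Since the Calabi--Yau completion adds arrows and loops but no new vertices, the same degree-$0$ idempotents $e_i*e_\rho$ remain full in $H^0(\bm\Pi_{n-1}(F)*G)$ and in $H^0(\bmgamma_n(Q,F,W)*G)$; hence the idempotents $\theta_G$ and $e_G$ extending $\theta$ and $\epsilon$ are again full, and restriction along $\theta_G(\bm\Pi_{n-1}(F)*G)\theta_G\hookrightarrow\bm\Pi_{n-1}(F)*G$ and $e_G(\bmgamma_n(Q,F,W)*G)e_G\hookrightarrow\bmgamma_n(Q,F,W)*G$ induces triangle equivalences of derived categories. It therefore remains to identify these two corner dg algebras, compatibly, with $\bm\Pi_{n-1}(F_G)$ and $\bmgamma_n(Q_G,F_G,W_G)$.

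I would construct the required non-unital dg homomorphisms by extending $\gamma_F$ and $\gamma_Q$: the dual arrows and loops of $\widetilde{F_G}$ and $\widetilde{Q_G}$ are sent to elements of the corners $\theta_G(\bm\Pi_{n-1}(F)*G)\theta_G$ and $e_G(\bmgamma_n(Q,F,W)*G)e_G$ that one reads off from the cellular bases of $\cs(kF)^{\vee}$, $\cs(kQ)^{\vee}$ and $\Xi$ and their skew-group versions, using that these compute $\RHom$ over the relevant enveloping algebras (Proposition~\ref{Prop:Props in LeMeur}(4)). The heart of the argument --- and the step I expect to be the main obstacle --- is to verify that the differential induced on the corner is exactly the relative Ginzburg (resp.\ derived preprojective) differential of $(Q_G,F_G,W_G)$, i.e.\ that after restriction to the corner one has $d(a^{\vee})=\partial_a W_G$ together with the required loop relations; this is a bookkeeping computation with Koszul signs and the $G_i$-isotypic decompositions, and for the arrow part is precisely the content of Lemma~\ref{Lem:W_{G}}. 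Alternatively one may carry out this step abstractly: the deformed relative $n$-Calabi--Yau completion is invariant under derived Morita equivalence, so the completion of $f*G$ with respect to $\varepsilon$ is quasi-equivalent to the completion of $kF_{G}\ra kQ_{G}$ with respect to the image $\varepsilon_G$ of $\varepsilon$, and by Lemma~\ref{Lem:W_{G}} together with the functoriality of Connes' boundary map one gets $\varepsilon_G=(0,B(W_G))$, which by Proposition~\ref{Prop: reduce CY} applied to $(Q_G,F_G,W_G)$ identifies this completion with $\bm G_{rel}\colon\bm\Pi_{n-1}(F_G)\ra\bmgamma_n(Q_G,F_G,W_G)$. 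Assembling these identifications with the corner inclusions produces the commutative square and, via the full-idempotent equivalences above, the triangle equivalences $\cd(\bmgamma_n(Q,F,W)*G)\simeq\cd(\bmgamma_n(Q_G,F_G,W_G))$ and $\cd(\bm\Pi_{n-1}(F)*G)\simeq\cd(\bm\Pi_{n-1}(F_G))$.
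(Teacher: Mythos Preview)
Your proposal is correct and is precisely the detailed unpacking of what the paper means by ``similar in spirit to \cite[Corollary 4.4.3]{LeMeur2020}'': you use Proposition~\ref{Prop: action on relative Ginzburg alg} for (1)--(2), Theorem~\ref{Thm:skew CY completion} together with Proposition~\ref{Prop: reduce CY} for (3), and for (4) the identification of the corner dg algebras via full idempotents with Lemma~\ref{Lem:W_{G}} supplying the potential, exactly as in Le~Meur's absolute case. The paper gives no further argument beyond the reference, so your sketch in fact fills in what the paper leaves implicit.
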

\begin{Rem}
	The equivalence $\cd(\bmgamma_{n}(Q_{G},F_{G},W_G))\ra\cd(\bmgamma_{n}(Q,F,W)*G)$ maps $\bmgamma_{n}(Q_{G},F_{G},W_G)$ to $\epsilon(\bmgamma_{n}(Q,F,W)*G)$.
\end{Rem}

\begin{Cor}
	 The equivalence $ \cd(\bmgamma_{n}(Q_{G},F_{G},W_G))\iso\cd(\bmgamma_{n}(Q,F,W)*G)$ restricts to equivalences 
	 $$\per(\bmgamma_{n}(Q_{G},F_{G},W_G))\iso\per(\bmgamma_{n}(Q,F,W)*G)$$ and $$\pvd(\bmgamma_{n}(Q_{G},F_{G},W_G))\iso\pvd(\bmgamma_{n}(Q,F,W)*G).$$
\end{Cor}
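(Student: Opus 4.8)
The plan is to deduce this corollary from the triangle equivalence $\Phi\colon \cd(\bmgamma_{n}(Q_{G},F_{G},W_G)) \iso \cd(\bmgamma_{n}(Q,F,W)*G)$ of Proposition~\ref{Prop: Lemeur-Morita}, together with the general principle that a triangle equivalence between derived categories of dg algebras which matches up compact generators automatically restricts to an equivalence on perfect subcategories, and that the perfectly valued subcategory is characterized intrinsically. First I would recall, from the Remark following Proposition~\ref{Prop: Lemeur-Morita}, that $\Phi$ sends the free module $\bmgamma_{G}\coloneqq\bmgamma_{n}(Q_{G},F_{G},W_G)$ to the idempotent summand $\epsilon\bigl(\bmgamma_{n}(Q,F,W)*G\bigr)$ of the free module over $\bmgamma\coloneqq\bmgamma_{n}(Q,F,W)$; here $\Phi$ is induced by restriction of scalars along the dg homomorphism $\bmgamma_{G}\to\bmgamma*G$, so it is $\Hom$-compatible with coproducts and therefore sends compact objects to compact objects, i.e.\ $\Phi(\per\bmgamma_{G})\subseteq\per(\bmgamma*G)$.

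For the $\per$-statement, the key point is that $\epsilon(\bmgamma*G)$, being a direct summand of the free module $\bmgamma*G$, lies in $\per(\bmgamma*G)$, and conversely $\bmgamma*G$ lies in $\thick(\epsilon(\bmgamma*G))$: indeed $\epsilon=\sum_{i\in[G\backslash Q_0],\rho}e_i*e_\rho$ is chosen (Definition~\ref{Def: Q_{G} and F_{G}}, via the "$j\in J$, $\alpha(j)\in I$" bookkeeping of the Remark) so that every indecomposable summand $\epsilon_j(\bmgamma*G)$ of the free module is isomorphic to some $\epsilon_{\alpha(j)}(\bmgamma*G)$ with $\alpha(j)\in I$, hence $\bmgamma*G\in\add(\epsilon(\bmgamma*G))\subseteq\thick(\epsilon(\bmgamma*G))$. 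Therefore $\epsilon(\bmgamma*G)$ is a compact generator of $\per(\bmgamma*G)$, and since $\Phi$ is a triangle equivalence carrying the compact generator $\bmgamma_{G}$ of $\per\bmgamma_{G}$ to it, it restricts to a triangle equivalence $\per\bmgamma_{G}\iso\per(\bmgamma*G)$. (Concretely: $\per\bmgamma_{G}=\thick(\bmgamma_{G})$ and $\per(\bmgamma*G)=\thick(\epsilon(\bmgamma*G))$, and an equivalence of ambient categories matching the two generators matches their thick closures.)

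For the $\pvd$-statement, I would use the intrinsic description: for a dg $k$-algebra $\Gamma$, $\pvd(\Gamma)$ consists of those objects of $\cd(\Gamma)$ whose underlying dg $k$-module is perfect, equivalently — since $\bmgamma_{G}$ and $\bmgamma*G$ are themselves proper over no field but are homologically smooth with $H^\ast$ finite-dimensional over each vertex — of those objects $M$ such that $\Hom_{\cd(\Gamma)}(P,M)$ is finite-dimensional for every $P\in\per\Gamma$ (and $M$ is compactly built from... — more cleanly, $\pvd\Gamma=\{M\mid \bigoplus_{i\in\mathbb Z}\dim_k\Hom_{\cd\Gamma}(\Gamma,\Sigma^iM)<\infty\}$ when $\Gamma$ has finite-dimensional cohomology in each degree and is homologically smooth, which holds for relative Ginzburg dg algebras). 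Because $\Phi$ is a triangle equivalence matching the compact generators $\bmgamma_{G}\leftrightarrow\epsilon(\bmgamma*G)$ and preserving all Hom-spaces and shifts, it preserves this finiteness condition; more robustly, one characterizes $\pvd$ via $\per$ using $\pvd\Gamma=\{M\mid \Hom_{\cd\Gamma}(M,\Gamma^{\oplus})\ \text{and}\ \Hom_{\cd\Gamma}(\Gamma^{\oplus},M)\ \text{finite-dim'l}\}$ — whichever intrinsic-to-$(\cd,\per)$ formulation one adopts, it is manifestly transported by $\Phi$. Hence $\Phi$ restricts to $\pvd\bmgamma_{G}\iso\pvd(\bmgamma*G)$.

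The main obstacle I anticipate is making the $\pvd$ part fully rigorous: one must confirm that the restriction-of-scalars functor along $\bmgamma_{G}\to\bmgamma*G$ genuinely preserves "underlying dg $k$-module is perfect" in both directions — which it does because, up to the equivalences of Proposition~\ref{Prop: Lemeur-Morita}, this functor is (on the nose, after the Morita identification $\bmgamma_{G}\simeq\epsilon(\bmgamma*G)\epsilon$) the functor $M\mapsto\epsilon M$ together with its (idempotent-completion) inverse $N\mapsto N\otimes_{\epsilon(\bmgamma*G)\epsilon}\epsilon(\bmgamma*G)$, and both operations change the underlying complex only by passing to a direct summand or a finite direct sum of copies (as $\epsilon$ has finite rank), hence preserve perfection of the underlying $k$-module. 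Given this, the statement follows purely formally. I would present the argument by first isolating the lemma "$\epsilon(\bmgamma*G)$ is a compact generator of $\per(\bmgamma*G)$ and $\epsilon(-)$ preserves $k$-perfectness both ways," and then invoking that a triangle equivalence respecting a distinguished compact generator restricts to the perfect and perfectly valued subcategories.
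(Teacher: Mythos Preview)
Your proposal is correct. The paper gives no proof at all for this corollary, treating it as an immediate consequence of Proposition~\ref{Prop: Lemeur-Morita}; your argument supplies exactly the missing details, namely that $\per$ coincides with the compact objects (hence is preserved by any triangle equivalence commuting with coproducts, as restriction of scalars does) and that $\epsilon(\bmgamma*G)$ is a compact generator because $\add(\epsilon(\bmgamma*G))=\add(\bmgamma*G)$ by the Morita idempotent setup of Definition~\ref{Def: Q_{G} and F_{G}}.

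Your $\pvd$ discussion is sound but can be tightened: once you know $\add(\epsilon(\bmgamma*G))=\add(\bmgamma*G)$, the characterization $\pvd(\Gamma)=\{M\mid \sum_i\dim_k\Hom_{\cd(\Gamma)}(\Gamma,\Si^iM)<\infty\}$ can be rewritten with $\epsilon(\bmgamma*G)$ in place of $\bmgamma*G$, and this version is transported verbatim by the equivalence $\Phi$ since $\Phi(\bmgamma_G)\simeq\epsilon(\bmgamma*G)$. This avoids the detour through ``underlying $k$-module is perfect'' and the separate check that $M\mapsto\epsilon M$ and its inverse preserve $k$-perfectness, though that check is also correct.
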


\begin{Ex}
	Let $ (Q,F,W) $ be the following ice quiver with potential
	\[
	\begin{tikzcd}
		&\color{blue}\boxed{2}\arrow[dr,"e"]&&5\arrow[ll,"s",swap]&\\
		\color{blue}\boxed{1}\arrow[ur,blue,"a"]\arrow[dr,blue,"b",swap]&&4\arrow[dr,swap,"g"]\arrow[ur,"f"]\arrow[ll,"c"]&\\
		&\color{blue}\boxed{3}\arrow[ur,"d",swap]&&6\arrow[ll,"r"]&W=eac-fes+gdr-bcd.
	\end{tikzcd}
	\]
	
	By \cite[Corollary 8.7]{Wu2023}, the relative Ginzburg algebra $\bmgamma_{3}(Q,F,W)$ is quasi-isomorphic to $H^{0}(\bmgamma_{3}(Q,F,W))$. We define a $ \mathbb{Z}/2\mathbb{Z}=\langle\sigma\rangle $-action on $ (Q,F) $ as follows
	\begin{itemize}
		\item $\sigma(1)=1$, $\sigma(2)=3$, $\sigma(4)=4$ and $\sigma(5)=6$.
		\item $\sigma(a)=-b$, $\sigma(c)=c$, $\sigma(e)=d$, $\sigma(s)=r$ and $\sigma(f)=-g$.
	\end{itemize}
It is clear that $W$ is invariant up to cyclic permutation.
We compute the following data to define $Q_{G} $.
\begin{itemize}
	\item $[G\setminus Q_{0}]=\{1,2,4,5\}$, $G_{1}=G=G_{4}$ and $G_{2}=\{\id\}=G_{5}$
	\item $[G/G_{1}]=\{\id\}=[G/G_{4}]$ and $[G/G_{2}]=G=[G/G_{5}]$.
	\item The irreducible representations of \(G_1\) and \(G_4\) consist of the trivial representation \(\rho_+\) given by \(kG \cdot (\id +\sigma)\) and the non-trivial one \(\rho_-\) given by \(kG \cdot (\id-\sigma)\).
	\item The irreducible representations of \(G_2\) and $G_{5}$ consist of the trivial representation \(\mathbb{K}\) of $\{\id\}$.
\end{itemize}
Then we have $\epsilon=\epsilon_{1}^{+}+\epsilon_{1}^{-}+\epsilon_{4}^{+}+\epsilon_{4}^{-}+\epsilon_{2}+\epsilon_{5}$, where where \(\epsilon^{+}_{1} = \frac{1}{2} (e_1 \ast \id + e_1 \ast \sigma)=\epsilon^{+}_{4}\), \(\epsilon^{-}_{1} = \frac{1}{2} (e_1 \ast \id - e_1 \ast \sigma)=\epsilon^{-}_{4}\), and \(\epsilon_2 = e_2 \ast \id=\epsilon_5\). Hence
$$M(1,2;k)=\spa(a*\id,b*\sigma)\simeq\spa(\epsilon_{1}^{+}a\epsilon_{2})\oplus\spa(\epsilon_{1}^{-}a\epsilon_{2})\simeq\rho_+\oplus\rho_-,$$
$$M(4,5;k)=\spa(f*\id,g*\sigma)\simeq\rho_+\oplus\rho_-,$$
$$M(4,1;\rho_+)=\spa(c\epsilon^{+}_{1})\simeq\rho_+,$$
$$M(4,1;\rho_-)=\spa(c\epsilon^{-}_{1})\simeq\rho_-,$$
$$M(2,4;\rho_+)=\spa(e\epsilon^{+}_{1})\simeq\mathbb{K},$$
$$M(2,4;\rho_-)=\spa(e\epsilon^{+}_{1})\simeq\mathbb{K},$$
$$M(5,2;\mathbb{K})=\spa(s*\id)\simeq\mathbb{K}. $$

Then the quiver $Q_{G}$ is given as follows
\[
\begin{tikzcd}
	(1,\rho_+)\arrow[dr,swap,"x_{12}^{+}"]&&(4,\rho_+)\arrow[ll,swap,"x_{41}^{+}"]\arrow[dr,"x_{45}^{+}"]&\\
	&(2,\mathbb{K})\arrow[ur,"x_{24}"]\arrow[dr,swap,"x_{24}"]&&(5,\mathbb{K})\arrow[ll,"x_{52}"]\\
	(1,\rho_-)\arrow[ur,"x_{12}^{-}"]&&(4,\rho_-)\arrow[ll,"x_{41}^{-}"]\arrow[ur,swap,"x_{45}^{-}"]\,.
\end{tikzcd}
\]

The potential $W_G$ on $Q_{G}$ is given by $x_{12}^{+}x_{41}^{+}x_{24}-x_{52}x_{45}^{+}x_{24}+x_{45}^{-}x_{24}x_{52}-x_{24}x_{12}^{-}x_{41}^{-}$.

Similarly, the quiver $F_{G}$ is given by the following full sub-quiver of $Q_{G}$
\[
\begin{tikzcd}
	\color{blue}\boxed{(1,\rho_+)}\arrow[dr,blue]&&\\
	&\color{blue}\boxed{(2,\mathbb{K})}\\
	\color{blue}\boxed{(1,\rho_-)}\arrow[ur,blue].
\end{tikzcd}
\]

We thus obtain an ice quiver with potential $(Q_{G},F_{G},W_G)$ and $\bmgamma_{3}(Q,F,W)*G$ is derived Morita equivalent to $ \bmgamma_{3}(Q_{G},F_{G},W_G)$.

\end{Ex}
Let $\overline{Q}$ be the quiver obtained from $Q$ by deleting all vertices in $F$ and all arrows incident with vertices in $F$. Let $\overline{W}$ be the potential on $Q$ obtaining by deleting all cycles passing through vertices of $F$ in $W$. We define $(\overline{Q_{G}},\overline{W_{G}})$ similarly.
\begin{Cor}\cite[Corollary 4.4.3]{LeMeur2020}\label{Cor: stable derived Morita}
	There is a non-unital dg algebra homomorphism
	$$\bmgamma_{n}(\overline{Q_{G}},\overline{W_G})\ra\bmgamma_{n}(\overline{Q},\overline{W})*G$$
	and
	the restriction-of-scalars functor induces the following equivalences of triangulated categories
	\[ \cd(\bmgamma_{n}(\overline{Q},\overline{W})*G)\simeq\cd(\bmgamma_{n}(\overline{Q_{G}},\overline{W_G})),\]
	\[ \per(\bmgamma_{n}(\overline{Q},\overline{W})*G)\simeq\per(\bmgamma_{n}(\overline{Q_{G}},\overline{W_G})),\]
	\[ \pvd(\bmgamma_{n}(\overline{Q},\overline{W})*G)\simeq\pvd(\bmgamma_{n}(\overline{Q_{G}},\overline{W_G})).\]
\end{Cor}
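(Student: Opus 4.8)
The plan is to deduce Corollary \ref{Cor: stable derived Morita} from Proposition \ref{Prop: Lemeur-Morita} by applying the latter to the non-frozen part of the ice quiver with potential. Concretely, I would first observe that deleting the frozen vertices and the arrows incident to them is compatible with the $G$-action: by Assumption \ref{ass1}, the set $F_0$ of frozen vertices and the span of the frozen arrows are $G$-stable, hence $G$ acts on $\overline{Q}$ by degree-preserving automorphisms stabilizing vertices and the span of arrows, and $\overline{W}$ is again $G$-invariant up to cyclic permutation (the deletion of cycles through $F$ is $G$-equivariant since $G$ permutes such cycles among themselves). Thus $(\overline{Q},\overline{W})$, viewed as an ice quiver with potential with empty frozen part, satisfies Assumption \ref{ass1}.

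Next I would identify the construction $(\overline{Q})_G$ with $\overline{Q_G}$ and $(\overline{W})_G$ with $\overline{W_G}$. This is the combinatorial heart of the argument: one checks from Definition \ref{Def: Q_{G} and F_{G}} that the vertices of $Q_G$ lying over $[G\backslash Q_0]\setminus[G\backslash F_0]$ are exactly the vertices of $(\overline{Q})_G$, that the space $M(i,j;\tau)$ computed inside $k\overline{Q}*G$ agrees with the corresponding space computed inside $kQ*G$ once $i,j$ are non-frozen (since an arrow of $Q$ between two non-frozen vertices is an arrow of $\overline{Q}$), and hence the arrows of $Q_G$ between non-frozen vertices coincide with the arrows of $(\overline{Q})_G$. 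Similarly the description of $W_G$ in Step 1 and Step 2 of the Remark following Lemma \ref{Lem:W_{G}}, applied with $\epsilon$ replaced by the sub-idempotent summing over the non-frozen orbit representatives, shows that the cycles of $W_G$ not passing through frozen vertices of $Q_G$ are precisely a potential representing $(\overline{W})_G$; equivalently $\overline{W_G}=(\overline{W})_G$ up to the usual cyclic-equivalence ambiguity in the choice of $W_G$.

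Once these identifications are in place, the Corollary is immediate: apply Proposition \ref{Prop: Lemeur-Morita} to the ice quiver with potential $(\overline{Q},\emptyset,\overline{W})$ (empty frozen subquiver). Part (1) of that Proposition gives the $G$-action on $\bmgamma_n(\overline{Q},\overline{W})$ — which is nothing but the restriction of the $G$-action on $\bmgamma_n(Q,F,W)$ to the idempotent $1-e$, matching the action referred to in the statement — and part (4), together with the identifications above, produces the non-unital dg algebra homomorphism $\bmgamma_n(\overline{Q_G},\overline{W_G})\ra\bmgamma_n(\overline{Q},\overline{W})*G$ and the triangle equivalence $\cd(\bmgamma_n(\overline{Q},\overline{W})*G)\simeq\cd(\bmgamma_n(\overline{Q_G},\overline{W_G}))$. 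Finally, the equivalences on $\per$ and $\pvd$ follow formally as in the Corollary preceding this one: the equivalence on the derived categories sends the free module $\bmgamma_n(\overline{Q_G},\overline{W_G})$ to the direct summand $\bar\epsilon(\bmgamma_n(\overline{Q},\overline{W})*G)$ of the free module over the skew group algebra, hence is compatible with compactness (giving $\per$) and, since the relative Ginzburg algebra in question is concentrated in the appropriate degrees so that $\pvd$ is intrinsically characterized, with the perfectly-valued condition (giving $\pvd$).

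The main obstacle I anticipate is the second step — verifying that the Le Meur construction commutes with passing to the non-frozen part, i.e. $(\overline{Q})_G=\overline{Q_G}$ and $(\overline{W})_G=\overline{W_G}$ up to cyclic equivalence. The vertex and arrow identification is a direct unravelling of Definition \ref{Def: Q_{G} and F_{G}}, but matching the potentials requires checking that the Step 1 rewriting $W=\sum_{i\in I}\epsilon_i(\sum_{\alpha(j)=i}\pm b_jWa_j)\epsilon_i$ restricts correctly: one must argue that a cycle of $W$ passes through a frozen vertex of $Q$ if and only if its image under $\gamma_Q^{-1}$ passes through a frozen vertex of $Q_G$, which uses that the idempotents $e_i*e_\rho$ with $i\in F_0$ are exactly those supported on $\theta$, together with the compatibility diagram (\ref{comm-digram}). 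This is essentially bookkeeping but it is where the argument could go wrong if the choice of representatives $a_j,b_j$ is not made compatibly with the frozen/non-frozen decomposition, so I would be careful to choose them inside $kF*G$ whenever possible.
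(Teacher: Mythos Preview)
Your proposal is correct and is in fact more detailed than the paper's own treatment: the paper gives no proof of this corollary, simply citing it from \cite[Corollary 4.4.3]{LeMeur2020}. Your strategy of applying Proposition~\ref{Prop: Lemeur-Morita} to the ice quiver with potential $(\overline{Q},\emptyset,\overline{W})$ is exactly the intended reduction, and your verification that the vertices and arrows of $(\overline{Q})_G$ and $\overline{Q_G}$ agree is the right unravelling of Definition~\ref{Def: Q_{G} and F_{G}}.

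One small comment on the point you flag as the main obstacle: the identification of potentials $(\overline{W})_G=\overline{W_G}$ holds only up to cyclic equivalence (as you say), but this is actually all that is needed, since the Ginzburg dg algebra depends on $W$ only through its class in $\HH_{n-3}$. You can sidestep the delicate bookkeeping with the $a_j,b_j$ entirely by working at the level of Hochschild homology: the commutative square (\ref{comm-digram}) restricts to a commutative square for the non-frozen idempotents, and the image of $\overline{W}$ under $\HH_{n-3}(k\overline{Q})\to\HH_{n-3}(k\overline{Q}*G)$ agrees with the image of $W$ under $\HH_{n-3}(kQ)\to\HH_{n-3}(kQ*G)\to\HH_{n-3}((1-e)(kQ*G)(1-e))$ simply because the quotient map $kQ\to k\overline{Q}$ kills exactly the cycles through $F$. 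This makes Lemma~\ref{Lem:W_{G}} applied to $\overline{Q}$ yield a potential whose Hochschild class coincides with that of $\overline{W_G}$, which is enough.
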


\begin{Rem}
Let $\mathbf{S_{w}}$ be a weighted marked surface \cite[Definition 2.1]{CHQ2024} and $\mathbb{A}$ a mixed-angulation of $\mathbf{S_{w}}$ \cite[Definition 2.3]{CHQ2024}. There is a branching covering $p\colon \widetilde{\mathbf{S_{w}}} \to \mathbf{S_{w}}$ such that $\mathbb{A}$ lifts to an $n$-angulation $\widetilde{\mathbb{A}}$ of $\widetilde{\mathbf{S_{w}}}$, where the quotient group $H$ is finite \cite[Proposition 3.25]{CHQ2024}. Let $\mathbb{S}$ and $\tilde{\mathbb{S}}$ be the corresponding $S$-graph of $(\mathbf{S_{w}},\mathbb{A})$ and $(\widetilde{\mathbf{S_{w}}},\widetilde{\mathbb{A})}$ respectively \cite[Definition 2.5]{CHQ2024}. Denote by $A(\mathbb{S},n)$ the associated relative graded Brauer graph algebra (RGB algebra) of $(\mathbb{S},n)$ \cite[Definition 2.8]{CHQ2024}. Let $G(\mathbb{S},n)$ be the dg algebra constructed in \cite[Construction 3.17]{CHQ2024} which is isomorphic to the Koszul dual of $A(\mathbb{S},n)$. 

By \cite[Proposition 3.25]{CHQ2024}, the dg algebra \( G(\mathbb{S},n) \) is obtained as the quotient of the relative Ginzburg dg algebra \( G(\widetilde{\mathbb{S}},n) \) associated with \( \widetilde{\mathbb{A}} \) by the action of \( H \). Hence \( G(\mathbb{S},n) \) is derived Morita equivalent to the skew-group dg algebra \( G(\widetilde{\mathbb{S}},n) * H \).

\end{Rem}

\section{Equivariant categories}\label{Section4:Equivariant categories}

Let $ A $ be a smooth dg $ k $-algebra and $ G $ a finite group acting on $ A $ by dg algebra automorphisms. We assume that $\charc(k)\nmid\car(G)$. Let $ \cc(A) $ be the category of right dg $ A $-modules. 
For each $g\in G$ denote the restriction
of scalars along the dg automorphism $A\ra A, a\mapsto\,^{g}a$ as follows
$$ ^{g}(\,)\colon\cc(A)\ra\cc(A)$$ $$\hspace{1cm}M\mapsto \,^{g}M $$
$$\hspace{1.1cm}f\mapsto \,^{g}f .$$
This defines a strict action of $G$ on $\cc(A)$ on the right by automorphisms of abelian categories. 

Recall that a \emph{G-equivariant object} \cite[Subsection 4.2]{Chen2015} in $ \cc(A) $ is a pair $ (M,\alpha) $, where $ M $ is an object in $ \cc(A) $ and $ \alpha $ assigns for each $ g\in G $ an isomorphism $ \alpha_{g}\colon \,^{g}M\ra M $ subject to the relations $ \alpha_{g}\circ\,^{g}(\alpha_{g'})=\alpha_{gg'} $. We denote by $ \cc(A)^{G} $ the category of $ G $-equivariant objects in $\cc(A)$. 


\begin{Lem}\rm\cite[Proposition 2.48]{LD2011}
	There is an equivalence of abelian categories $$ \cc(A)^{G}\simeq\cc(A*G) .$$ 
\end{Lem}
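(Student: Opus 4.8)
The plan is to construct an explicit pair of mutually inverse functors between $\Mod(A)^{G}$ and $\Mod(A*G)$, and then check they are inverse equivalences. First I would spell out the functor $\Phi\colon\Mod(A*G)\ra\Mod(A)^{G}$: given a right dg $A*G$-module $N$, restrict the action along $\iota\colon A\ra A*G$ to obtain a dg $A$-module $N|_{A}$, and for each $g\in G$ let $\alpha_{g}\colon\,^{g}(N|_{A})\ra N|_{A}$ be the $A$-linear map $n\mapsto n\cdot(1*g^{-1})$ (one checks $A$-linearity using the relation $(1*g^{-1})(a*\id)=(\,^{g^{-1}}\!a*\id)(1*g^{-1})$ in $A*G$, so that the map $\,^{g}N\ra N$ intertwines the twisted action with the untwisted one). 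The cocycle condition $\alpha_{g}\circ\,^{g}(\alpha_{g'})=\alpha_{gg'}$ follows from $(1*g^{-1})(1*g'^{-1})=1*(g'g)^{-1}$ together with the definition of the twist, and the degree-$0$, differential-commuting properties are immediate since right multiplication by $1*g^{-1}$ is a closed degree-$0$ endomorphism of the underlying complex. On morphisms $\Phi$ is just restriction of scalars.

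Next I would describe the quasi-inverse $\Psi\colon\Mod(A)^{G}\ra\Mod(A*G)$: given $(M,\alpha)$, keep the same underlying complex $M$ and define the $A*G$-action by $m\cdot(a*g)\coloneqq\alpha_{g}^{-1}(m\cdot a)$, or more symmetrically $m\cdot(a*g)=\,^{g^{-1}}$-twist of $m\cdot a$ transported by $\alpha$; the key verification is associativity $(m\cdot(a*g))\cdot(b*h)=m\cdot((a*g)(b*h))=m\cdot(a\,^{g}b*gh)$, which reduces to the cocycle identity for $\alpha$ combined with the compatibility of $\alpha_{g}$ with the $A$-action (namely $\alpha_{g}(\,^{g}m\cdot\,^{g}a)=\alpha_{g}(\,^{g}m)\cdot a$, which is exactly $A$-linearity of $\alpha_{g}\colon\,^{g}M\ra M$). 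Since the underlying complexes and differentials are unchanged by both $\Phi$ and $\Psi$, the functors are compatible with the dg/complex structure automatically, and on morphisms $\Psi$ sends an equivariant morphism (one commuting with the $\alpha$'s) to an $A*G$-linear map, again by a direct check.

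Finally I would verify $\Phi\Psi\cong\id$ and $\Psi\Phi\cong\id$. Both composites are the identity on underlying complexes; the only content is that the induced structure maps agree. For $\Psi\Phi$: starting from $N$, restrict to $N|_{A}$ with twists $\alpha_{g}(n)=n\cdot(1*g^{-1})$, then reconstruct the $A*G$-action as $n\cdot(a*g)=\alpha_{g}^{-1}(n\cdot a)=(n\cdot a)\cdot(1*g)=n\cdot((a*\id)(1*g))=n\cdot(a*g)$, recovering the original action on the nose. For $\Phi\Psi$: starting from $(M,\alpha)$, build the $A*G$-module, restrict back along $\iota$ to recover $M$ with $A$-action $m\cdot a=m\cdot(a*\id)=\alpha_{\id}^{-1}(m\cdot a)=m\cdot a$ (using $\alpha_{\id}=\id$), and the reconstructed twist is $m\cdot(1*g^{-1})=\alpha_{g^{-1}}^{-1}(m)$; one checks $\alpha_{g^{-1}}^{-1}=\,^{g^{-1}}\!$-transport of $\alpha_{g}$, i.e. that this equals the original $\alpha_{g}$ as a map $\,^{g}M\ra M$, which is again a consequence of the cocycle relation evaluated at the pair $(g,g^{-1})$. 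None of this is deep; the one place to be slightly careful — and what I'd flag as the main (minor) obstacle — is bookkeeping the left-versus-right conventions: $G$ acts on $\Mod(A)$ on the right via restriction along $a\mapsto\,^{g}a$, so the twist isomorphisms must point $\,^{g}M\ra M$ and the $A*G$-multiplication convention $(x*g)(y*h)=x\,^{g}\!y*gh$ forces the $g^{-1}$'s into the formulas above; getting every twist on the correct side so that all the cocycle and associativity identities line up without sign or direction errors is the whole of the work. As this is the standard descent/equivariantization statement (it appears already in the non-dg case), I would simply remark that the same argument applies verbatim, the differentials playing no role.
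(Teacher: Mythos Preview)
Your proposal is correct and follows essentially the same approach as the paper: both construct the explicit pair of inverse functors, defining the $A*G$-action on $(M,\alpha)$ by $m\cdot(a*g)=\alpha_{g}^{-1}(ma)$ and conversely extracting the equivariant structure on an $A*G$-module $N$ by restricting along $\iota$ and letting $\alpha_{g}$ be right multiplication by a suitable $1*g^{\pm1}$. Your write-up is in fact more detailed than the paper's (which omits the explicit verification that $\Phi\Psi$ and $\Psi\Phi$ are the identity and simply asserts the two functors are inverses), and your closing remark about the left/right bookkeeping is well taken --- indeed the paper uses the convention $\alpha_{g}(n)=n(1*g)$ rather than your $n(1*g^{-1})$, but this is an immaterial reindexing.
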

%
%
%

\begin{Rem}
	The image of $A*G$ under the equivalence $\cc(A*G)\ra\cc(A)^{G}$ is isomorphic to $(A[G],\mathrm{Id})$, where $A[G]$ is the direct sum $\oplus_{h\in G}\,^{h}A$ as a right $A$-module and for each $g\in G$, $\mathrm{Id}_{g}$ is the unique structural isomorphism from $^{g}(\oplus_{h\in G}\,^{h}A)$ to $\oplus_{h\in G}\,^{h}A$.
\end{Rem}

Let $ \cc $ be a category. Recall from \cite[Chapter VI]{MacLane1998} that a \emph{monad} on $ \cc $ is a triple $ (M, \eta, \mu) $ consisting of an endofunctor $ M\colon\cc\ra\cc $ and two natural transformations, the unit $ \eta\colon\id_{\cc}\ra M $ and the multiplication $ \mu\colon M\circ M\ra M $, subject to the relations $ \mu\circ M\mu=\mu\circ\mu M $ and $ \mu\circ M\eta=\id_{M}=\mu\circ\eta M $. We sometimes denote the monad by $ M $ when $ \eta $ and $ \mu $ are understood.

For a monad $ (M,\eta,\mu) $ on $ \cc $, an \emph{$ M $-module} is a pair $ (X, \lambda) $ consisting of an object $ X $ in $ \cc $ and a morphism $ \lambda\colon M(X)\ra X $ subject to the conditions $ \lambda\circ M\lambda=\lambda\circ\mu_{X} $ and $ \lambda\circ\eta_{X}=\id_{X} $; the object $ X $ is said to be the underlying object of the module. A morphism $ f\colon(X,\lambda)\ra (X',\lambda') $ of two $ M $-modules is a morphism $ f\colon X\ra X' $ in $ \cc $ satisfying $ f\circ\lambda=\lambda'\circ M(f) $. This defines the category $ M $-$ \Mod_{\cc} $ of $ M $-modules.

For each object $ X $ in $ \cc $, we have the corresponding $ M $-module $ (M(X),\mu_{X}) $, the \emph{free module}. This gives rise to a functor $ F_{M}\colon\cc\ra M$-$\Mod_{\cc} $ sending $ X $ to the free module $ (M(X),\mu_{X}) $, and a morphism $ f\colon X\ra Y $ to the morphism $ M(f)\colon (M(X),\mu_{X})\ra(M(Y),\mu_{Y}) $. Let $ G_{M}\colon M\text{-}\Mod_{\cc}\ra\cc $ be the forgetful functor. Then we have an adjoint pair $ (F_{M}, G_{M};\eta_{M},\epsilon_{M}) $. This adjoint pair $ (F_{M},G_{M};\eta_{M},\epsilon_{M}) $ defines the given monad $ M $, i.e. $ M=G_{M}\circ F_{M} $. Moreover, it satisfies the following universal property.

For any adjoint pair $ (F,G;\eta,\epsilon) $ on $ \cc $ and $ \cd $ that defines $ M $, there is a unique functor $ J\colon\cd\ra M\text{-}\Mod_{\cc} $ such that $ J\circ F=F_{M} $ and $ G_{M}\circ J=G $; see \cite[VI.3]{MacLane1998}. This unique functor $ J $ is given by $ J(D)=(G(D),G\epsilon_{D}) $ for any object $ D $ and $ J(f)=G(f) $ for any morphism $ f $, and we call $ J $ the \emph{comparison functor} associated to the adjoint pair $ (F,G;\eta,\epsilon) $.

In our situation, let $U$ be the forgetful functor 
$$U\colon\cc(A)^{G}\ra\cc(A)$$
$$\hspace{0.1cm}(X,\alpha)\mapsto X .$$

It admits a left adjoint
$$K\colon\cc(A)\ra\cc(A)^{G}$$
$$\hspace{2.4cm}X\mapsto(\oplus_{h\in G}\,^{h}X,\mathrm{Id}),$$
where for each $g\in G$,  $\mathrm{Id}_{g}\colon^{g}(\oplus_{h\in G}\,^{h}X)\ra\,\oplus_{h\in G}\,^{h}X$ is the unique structural isomorphism. Denote by $(M=UK,\eta,\mu=U\epsilon F)$ the monad on $\cc(A)$ defined by the adjoint pair $(K, U)$, where $\eta\colon\id_{\cc(A)}\ra UK$ is the unit and $\epsilon\colon KU\ra\id_{\cc(A)}$ is the counit. By \cite[Lemma 4.3]{Chen2015}, we can identify $\cc(A)^{G}$ with $M\text{-}\Mod_{\cc(A)}$. Since $K$ and $U$ are both exact, they extend to triangle functors \(\tilde{K}\colon \cd(A)\rightarrow\cc(A)^{G}[\mathrm{quasi}^{-1}]\) and \(\tilde{U}\colon\cc(A)^{G}[\mathrm{quasi}^{-1}]\rightarrow\cd(A)\). They still form an adjoint pair and the associated monad is denoted by $\tilde{M}$. Therefore we have the following comparison functor associated to the adjoint pair $(\tilde{K},\tilde{U})$
\begin{equation}\label{fun:K-}
	\tilde{K}\colon\cc(A)^{G}[\mathrm{quasi}^{-1}]\ra\tilde{M}\text{-}\Mod_{\cd(A)}.
\end{equation}

The $G$-action on $\cc(A)$ preserves
both projective and injective model structures of $\cc(A)$, and hence defines a strict action of $G$ on $\cd(A)$
by strict automorphisms of triangulated category. Then we have the forgetful functor \(U'\colon\cd(A)^G \rightarrow\cd(A)\) and its left adjoint \(K'\colon\cd(A) \rightarrow \cd(\cc(A))^G\). Observe that the associated monad $M'$ coincides with $\tilde{M}$. Therefore $\cd(A)^{G}$ can be identified with $\tilde{M}\text{-}\Mod_{\cd(A)}$ by \cite[Lemma 4.3]{Chen2015} and the comparison functor~(\ref{fun:K-}) is equivalent to
\begin{equation*}
	\tilde{K}\colon\cd(A*G)\simeq\cc(A)^{G}[\mathrm{quasi}^{-1}]\ra\cd(A)^{G}.
\end{equation*}

It is clear that the strict action of $G$ on $\cd(A)$ induces a strict action of $G$ on $\per(A)$ and $\pvd(A)$ respectively. Then $\tilde{K}$ induces the following commutative diagram
\begin{equation}\label{diagram:pvd and per}
	\begin{tikzcd}
		\pvd(A*G)\arrow[r,"\tilde{K}"]\arrow[d,hook]&\pvd(A)^{G}\arrow[d,hook]\\
		\per(A*G)\arrow[r,"\tilde{K}"]&\per(A)^{G}.
	\end{tikzcd}
\end{equation}
\begin{Prop}\label{Prop:equ of cats}
 We have the following equivalences of triangulated categories
$$ \tilde{K}\colon\per(A*G)\iso\per(A)^{G},\quad\pvd(A*G)\iso\pvd(A)^{G}.$$
Moreover, the image $\tilde{K}(A*G)$ is isomorphic to $(\oplus_{h\in G}\,^{h}A,\mathrm{Id})$.
\end{Prop}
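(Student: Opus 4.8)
The plan is to deduce the statement from the general machinery of monads and comparison functors recalled above, combined with Be\'ilinson--Bernstein--Gabber--style density arguments. First I would observe that we already have, from \cite[Lemma 4.3]{Chen2015} and the discussion preceding the proposition, the identification $\cd(A*G)\simeq\Mod(A)^{G}[\mathrm{quasi}^{-1}]\simeq\tilde{M}\text{-}\Mod_{\cd(A)}\simeq\cd(A)^{G}$, so the functor $\tilde{K}$ is already known to be a triangle equivalence at the level of the whole derived category. Hence the only thing to prove is that $\tilde{K}$ restricts to equivalences between the two pairs of subcategories in the diagram~(\ref{diagram:pvd and per}); the commutativity of that diagram is already established. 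Since $\tilde{K}$ is a triangle equivalence, it suffices to show that $\tilde{K}$ and its quasi-inverse both send $\per$ into $\per$ and $\pvd$ into $\pvd$.

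For the inclusion $\tilde{K}(\per(A*G))\subseteq\per(A)^{G}$: the forgetful functor $\tilde{U}\colon\cd(A)^{G}\to\cd(A)$ is the restriction of scalars along $\iota\colon A\to A*G$, which commutes with arbitrary coproducts and sends $A*G$ to $\bigoplus_{h\in G}{}^{h}A\in\per(A)$; therefore $\tilde{U}$ preserves compact objects, i.e.\ $\tilde{U}(\per(A*G))\subseteq\per(A)$, and an object of $\cd(A)^{G}$ lies in $\per(A)^{G}$ exactly when its image under $\tilde{U}$ lies in $\per(A)$ (here one uses that $\per(A)^{G}$ is by definition $\per(A)\cap\cd(A)^{G}$, or equivalently that $A*G$, being a finitely generated projective $A$-module on restriction, detects compactness). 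Thus $\tilde{K}$ carries $\per(A*G)$ into $\per(A)^{G}$. Conversely, the left adjoint $\tilde{K}'\colon\cd(A)\to\cd(A)^{G}$, $X\mapsto(\bigoplus_{h\in G}{}^{h}X,\mathrm{Id})$, sends the generator $A$ to $\tilde{K}(A*G)$, which is compact, and $\tilde{K}'$ preserves coproducts, so $\tilde{K}'(\per(A))\subseteq\per(A*G)$; running this together with the fact that every object of $\per(A)^{G}$ is, as an object of $\cd(A*G)$, a direct summand of $\tilde{K}'$ applied to its underlying $A$-module (because $M$ is a summand of $M\ten_{A^{e}}\triangle_{A}$-type arguments, or more simply because $\mathrm{char}(k)\nmid\mathrm{Card}(G)$ makes the unit/counit split), gives the reverse inclusion. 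The same two-sided argument, applied verbatim with ``perfect over $k$'' in place of ``compact over $A$'', handles $\pvd$: restriction of scalars along $\iota$ and along $k\to A$ does not change the underlying $k$-module up to a finite direct sum, so $\tilde{U}$ preserves and detects membership in $\pvd$, and $\tilde{K}'$ preserves it as well since ${}^{h}X$ has the same underlying $k$-complex as $X$. Finally the identification $\tilde{K}(A*G)\cong(\bigoplus_{h\in G}{}^{h}A,\mathrm{Id})$ is exactly the content of the Remark following the equivalence $\Mod(A*G)\simeq\Mod(A)^{G}$, transported through $\mathbf{p}(-)$ to the derived level.

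The main obstacle I anticipate is the bookkeeping needed to see that $\tilde{K}$ and $\tilde{K}'$ are genuinely mutually quasi-inverse \emph{as triangle functors} respecting these subcategories, rather than merely adjoint: one must check that the unit and counit of $(\tilde{K}',\tilde{U})$ are isomorphisms on $\cd(A)^{G}$, which is where the hypothesis $\mathrm{char}(k)\nmid\mathrm{Card}(G)$ enters essentially (it makes the relevant averaging idempotent $\frac{1}{|G|}\sum_{g}g$ available, so that the monad $\tilde{M}$ is separable and the comparison functor of~(\ref{fun:K-}) is an equivalence). Once separability is in hand, everything else is the soft preservation-and-detection argument above, and the restriction to $\per$ and $\pvd$ is automatic because those subcategories are defined by conditions (compactness; perfectness of the underlying $k$-module) that are visibly stable under the explicit formulas for $\tilde{K}$ and $\tilde{U}$.
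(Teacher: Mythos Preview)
Your proof is correct but organized differently from the paper's. The paper does not first establish $\cd(A*G)\simeq\cd(A)^{G}$ and then restrict; instead it applies Chen's separable-monad machinery \emph{directly} to $\per(A)$ (and then to $\pvd(A)$): since $\per(A)$ is an idempotent complete triangulated category, \cite[Lemma~4.4]{Chen2015} endows $\per(A)^{G}$ with its pre-triangulated structure, the characteristic hypothesis makes the monad $\tilde{M}$ on $\per(A)$ separable via \cite[Lemma~4.4(2)]{Chen2015}, and then \cite[Proposition~4.1]{Chen2015} immediately gives that the comparison functor $\per(A*G)\to\per(A)^{G}$ is an equivalence. This bypasses your preservation-and-detection step entirely. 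Your route is also valid and is more explicit about \emph{why} the subcategories match up under the ambient equivalence, but note that your opening paragraph overstates what the text has established prior to the proposition: the equivalence $\cd(A*G)\simeq\cd(A)^{G}$ is not proved beforehand and itself requires precisely the separability argument you postpone to your final paragraph. Reordering so that separability is invoked first (either on $\cd(A)$, as you do, or directly on $\per(A)$, as the paper does) would make the logical flow cleaner.
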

\begin{proof}
Let us first show the equivalence $\tilde{K}\colon\per(A*G)\iso\per(A)^{G}$. Since $\per(A*G)$ is a triangulated category which is idempotent complete, by \cite[Lemma 4.4]{Chen2015}, the category $\per(A)^{G}$ has a unique pre-triangulated structure (a triangulated structure possibly without the octahedral axiom) such that the forgetful functor $U'\colon\per(A)^{G}\ra\per(A)$ is a triangle functor. It follows that functor $\tilde{K}\colon\per(A*G)\ra\per(A)^{G}$ is a triangle functor. 

By our assumption $\charc(k)\nmid\car(G)$ and \cite[Lemma 4.4(2)]{Chen2015}, the monad $\tilde{M}$ on $\per(A*G)$ is separable. Then the result follows from \cite[Proposition 4.1]{Chen2015}. The proof of the equivalence $\tilde{K}\colon\pvd(A*G)\ra\pvd(A)^{G}$ is similar.
\end{proof}

\bigskip

Let $ e $ be an idempotent of $ A $. Suppose that it admits a finite primitive orthogonal idempotents decomposition $ e=\sum_{i\in I}e_{i} $ and the set $ \{e_{i}|\,i\in I\} $ is stable under the action of $ G $. Hence the $G$-action on $A$ induces a $G$-action on the dg subalgebra $eAe$.  

Recall that Drinfeld’s construction of the homotopy cofiber of $i\colon eAe\hookrightarrow A$ is given as follows \cite[Section 7.2]{Kalck-Yang2018}:

Define a dg \( k \)-category \( \ca \) with two objects \( \epsilon \) and \( \gamma \) such that the morphism spaces are given by:
\begin{align*}
	\text{Hom}_\ca(\epsilon, \epsilon) &= eAe, \\
	\text{Hom}_\ca(\epsilon, \gamma) &= (1 - e)Ae, \\
	\text{Hom}_\ca(\gamma, \epsilon) &= eA(1 - e), \\
	\text{Hom}_\ca(\gamma, \gamma) &= (1 - e)A(1 - e),
\end{align*}
and the composition of morphisms is induced from the multiplication of $A$. Let \( \ca' \) be the dg subcategory of \( \ca \) consisting of the object \( \epsilon \).  We form the dg quotient \( \ca / \ca' \) by formally adjoining a morphism \( x: \epsilon \to \epsilon \) of degree \(-1\) such that \( d(x) = \text{id}_{\epsilon} = e \).
Then the dg algebra \( \overline{A} = \text{End}_{\ca / \ca'}(\gamma) \) is a homotopy cofiber of $i\colon eAe\hookrightarrow A$.
As a total complex, \( \overline{A} \) has the form
\begin{align*}
	\cdots & \longrightarrow Ae \otimes (eAe)^{\otimes p} \otimes eA \overset{d^{-p-1}}{\longrightarrow} \cdots \longrightarrow Ae \otimes eAe \otimes eA \overset{d^{-2}}{\longrightarrow} Ae \otimes eA \overset{d^{-1}}{\longrightarrow} A,
\end{align*}
where the rightmost term is in degree 0 and the term \( Ae \otimes (eAe)^{\otimes p} \otimes eA \) is in degree \(-p - 1\).
The differentials are given by
\[
d^{-p-1}(a_0 \otimes a_1 \otimes \cdots \otimes a_p \otimes a_{p+1}) = \sum_{i=0}^{p+1} (-1)^i a_0 \otimes \cdots \otimes a_i a_{i+1} \otimes \cdots \otimes a_{p+1}.
\]
The multiplication of \( \overline{A} \) is induced from that of \( A \)
\[
(a_0 \otimes \cdots \otimes a_{p+1})(b_0 \otimes \cdots \otimes b_{q+1}) = a_0 \otimes \cdots \otimes a_p \otimes a_{p+1} b_0 \otimes b_1 \otimes \cdots \otimes b_{q+1}.
\]

For each $p\geqslant0$, define a $G$-action on $Ae \otimes (eAe)^{\otimes p} \otimes eA$ by the following formula
$$g.(a_0 \otimes a_1 \otimes \cdots \otimes a_p \otimes a_{p+1})\coloneqq\,^{g}\!a_0 \otimes\,^{g}\!a_1 \otimes \cdots \otimes\,^{g}\!a_p \otimes\,^{g}\! a_{p+1}.$$
It is easy to see that this defines a $G$-action on $\overline{A}$ and the canonical morphism $A\ra\overline{A}$ is $G$-equivariant. Moreover, if $A$ is a dg quiver $(kQ,d)$, i.e. $Q$ is a graded quiver with finitely many vertices and the differential $d$ takes all trivial paths to $0$, then $\overline{A}$ is quasi-equivalent to $A/AeA$ \cite[Theorem 7.1]{Kalck-Yang2018}.
\bigskip

Let  $ \mathrm{pvd}_{e}(A) $ be the full triangulated subcategory of $ \mathrm{pvd}(A) $ defined as the kernel of the restriction functor $ i_{*}\colon\cd(A)\to\cd(eAe) $. By the recollement in \cite[Corollary 4.6]{Wu2023}, $ \mathrm{pvd}_{e}(A) $ is triangle equivalent to $\pvd(\overline{A})$. Hence $\pvd_{e}(A)$ admits a $G$-action induced from the $G$-action on $\cd(A)$.



\begin{Cor}
The triangle equivalence $\pvd(A)^{G}\simeq\pvd(A*G)$ restricts to the following equivalence of triangulated categories
	$$(\pvd_{e}(A))^{G}\simeq\pvd(\overline{A}*G).$$
\end{Cor}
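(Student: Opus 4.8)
The plan is to combine the recollement of \cite[Corollary~4.6]{Wu2023}, applied to the skew group dg algebra $A*G$, with Proposition~\ref{Prop:equ of cats}. First I would observe that, since $\{e_{i}\mid i\in I\}$ is $G$-stable, the idempotent $e=\sum_{i\in I}e_{i}$ is $G$-invariant; hence $f\coloneqq e*\id$ is an idempotent of $A*G$, and from $\,^{g}e=e$ one gets $f(A*G)f=(eAe)*G$, while $(A*G)f$ and $f(A*G)$ are the skew group modules $(Ae)*G$ and $(eA)*G$ over $(eAe)*G$. Moreover $f$ has a finite primitive orthogonal idempotent decomposition (refine each $e_{i}*\id$), and $A*G$ is smooth because $\charc(k)\nmid\car(G)$. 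Therefore \cite[Corollary~4.6]{Wu2023} applies to the non-unital inclusion $(eAe)*G\hookrightarrow A*G$ and identifies $\pvd_{f}(A*G)$, the kernel of $i_{*}\colon\cd(A*G)\to\cd((eAe)*G)$ inside $\pvd(A*G)$, with $\pvd(\overline{A*G})$, where $\overline{A*G}$ denotes the Drinfeld homotopy cofiber of $(eAe)*G\hookrightarrow A*G$.

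Second I would identify $\overline{A*G}$ with $\overline{A}*G$. Conceptually, applying $-*G$ to the homotopy cofiber sequence of dg algebras $eAe\to A\to\overline{A}$ should again be a homotopy cofiber sequence $(eAe)*G\to A*G\to\overline{A}*G$ --- since $-*G$ is compatible with the relevant cofibrant replacements and tensor products (cf.\ Proposition~\ref{Prop:Props in LeMeur}) and $(eAe)*G=f(A*G)f$ --- whence uniqueness of the Drinfeld quotient gives $\overline{A*G}\simeq\overline{A}*G$. Alternatively one compares the two explicit total complexes recalled before the statement term by term, matching the degree $-p-1$ piece $(A*G)f\otimes\big(f(A*G)f\big)^{\otimes p}\otimes f(A*G)$ with $\big(Ae\otimes(eAe)^{\otimes p}\otimes eA\big)*G$ and checking that the differentials, the multiplication, and the $G$-action on $\overline{A}$ constructed in the text all correspond. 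I expect this compatibility of the homotopy cofiber with the skew group construction to be the main technical point of the proof.

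Third I would check that the equivalence $\tilde{K}\colon\pvd(A*G)\iso\pvd(A)^{G}$ of Proposition~\ref{Prop:equ of cats} intertwines restriction of scalars: the square formed by $\tilde{K}$, the restriction functors $\cd(A*G)\to\cd((eAe)*G)$ and $\cd(A)\to\cd(eAe)$, and the analogous equivalence $\cd((eAe)*G)\iso\cd(eAe)^{G}$, commutes up to natural isomorphism. Indeed, on a dg $A*G$-module $M$ both composites yield the dg $eAe$-module $Me$ equipped with the structural isomorphisms given by right multiplication with the group elements $1*g$, which works because the restriction functor $\cd(A)\to\cd(eAe)$ is compatible with the $G$-actions (the $G$-action on $eAe$ being the restriction of the one on $A$). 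Since a dg module over a skew group dg algebra is acyclic exactly when its underlying dg module is, $\tilde{K}$ carries $\pvd_{f}(A*G)$ onto $\big(\pvd_{e}(A)\big)^{G}$ and restricts there to an equivalence. Combining the three steps, $\big(\pvd_{e}(A)\big)^{G}\simeq\pvd_{f}(A*G)\simeq\pvd(\overline{A*G})\simeq\pvd(\overline{A}*G)$, and by construction this chain is the restriction of the equivalence $\pvd(A)^{G}\simeq\pvd(A*G)$, which is the assertion.
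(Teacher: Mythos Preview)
Your argument is correct, but it takes a longer route than the paper's. The paper's proof is essentially two lines: since the canonical morphism $A\to\overline{A}$ is $G$-equivariant (this was set up in the paragraphs immediately preceding the Corollary), the equivalence $\pvd_{e}(A)\simeq\pvd(\overline{A})$ coming from the recollement of \cite[Corollary~4.6]{Wu2023} is itself $G$-equivariant, so passing to equivariant categories gives $(\pvd_{e}(A))^{G}\simeq(\pvd(\overline{A}))^{G}$; then one applies Proposition~\ref{Prop:equ of cats} to the dg algebra $\overline{A}$ rather than to $A$, obtaining $(\pvd(\overline{A}))^{G}\simeq\pvd(\overline{A}*G)$.

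By contrast, you pass to the skew group side first: you apply the recollement to $A*G$ with the idempotent $f=e*\id$, then need the identification $\overline{A*G}\simeq\overline{A}*G$, and finally check that $\tilde{K}$ matches $\pvd_{f}(A*G)$ with $(\pvd_{e}(A))^{G}$. This works, and your second step (compatibility of the Drinfeld quotient with $-*G$) is an interesting fact in its own right, but it is unnecessary here: the paper avoids it entirely by applying Proposition~\ref{Prop:equ of cats} directly to $\overline{A}$. In effect, you proved $(\pvd_{e}(A))^{G}\simeq\pvd_{f}(A*G)\simeq\pvd(\overline{A*G})\simeq\pvd(\overline{A}*G)$, whereas the paper proves $(\pvd_{e}(A))^{G}\simeq(\pvd(\overline{A}))^{G}\simeq\pvd(\overline{A}*G)$; the second chain is shorter and bypasses the main technical point you identified.
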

\begin{proof}
	It follows from the equivalence $\pvd_{e}(A)\simeq\pvd(\overline{A})$ and Proposition \ref{Prop:equ of cats}.
\end{proof}

%

\section{Equivariant relative cluster categories and Equivariant Higgs categories}\label{Section5}
Let $ \ct $ be any triangulated category. Let $ \ct' $ be a full subcategory of $ \ct $. We denote by $ \pr_{\ct}\ct' $ the full subcategory of $ \ct $ whose objects are cones of morphisms in $ \add\ct' $. Similarly, we denote by $ \copr_{\ct}\ct' $ the full subcategory of $ \ct $ whose objects are those $ X $ such that $ \Si X $ is in $ \pr_{\ct}\ct' $. If $ \ct'=\add T $ for some object $ T\in\ct $, the categories $ \pr_{\ct}\ct' $ and $ \copr_{\ct}\ct' $ will be simply denoted by $ \pr_{\ct}T $ and $ \copr_{\ct}T $ respectively.

Let $ (Q,F,W) $ be an ice quiver with potential and $ G $ a finite group acting on $ (Q,F,W) $, i.e. the Assumption~\ref{ass1} is satisfied. Denote by $ \bm{\Gamma} $ the corresponding relative Ginzburg algebra, i.e. the 3-dimensional relative Ginzburg dg algebra. Let $ e=\sum_{i\in F}e_{i} $ be the idempotent associated with the set of frozen vertices and $ \cp $ the additive subcategory $ \add(e\bmgamma) $ of $ \per\bmgamma $. The action of $ G $ on $ (Q,F,W) $ induces an action of $ G $ on $ \bm{\Gamma} $ by dg algebra automorphisms, and the dg subalgebra $e\bmgamma e$ is invariant under this $G$-action.

Let $\overline{Q}$ be the quiver obtained from $Q$ by deleting all vertices in $F$ and all arrows incident with vertices in $F$. Let $\overline{W}$ be the potential on $Q$ obtaining by deleting all cycles passing through vertices of $F$ in $W$. We make the following assumption.
\begin{assumption}\label{ass2}\ 
	
	\begin{itemize}
		\item The quiver with potential $(\overline{Q},\overline{W})$ is Jacobi-finite, i.e. the corresponding Jacobian algebra $H^{0}(\bmgamma(\overline{Q},\overline{W}))$ is finite dimensional;
		\item The additive subcategory $\cp=\add(e\bmgamma)$ is functorially finite in $\add(\bmgamma)$.
	\end{itemize}
\end{assumption}
In particular, if $(Q,F,W)$ is relative Jacobi-finite, i.e. $H^{0}(\bmgamma(Q,F,W))$ is finite dimensional, then the assumption above holds.

\begin{Lem}
	If $(Q,F,W)$ satisfies Assumption \ref{ass2}, then the ice quiver with potential $(Q_{G},F_{G},W_G)$ constructed in Definition \ref{Def: Q_{G} and F_{G}} also satisfies the corresponding assumption.
\end{Lem}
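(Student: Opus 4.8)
The plan is to check the two clauses of Assumption~\ref{ass2} for $(Q_G,F_G,W_G)$ one at a time, transporting each property from $(Q,F,W)$ through the (derived) Morita equivalences of Corollary~\ref{Cor: stable derived Morita}, Proposition~\ref{Prop: Lemeur-Morita} and Proposition~\ref{Prop:equ of cats}, together with the description of the cohomology of a skew group dg algebra in Proposition~\ref{Prop: cohomology of skew}. The only genuinely new ingredient needed is the general principle that functorial finiteness of a $G$-stable additive subcategory passes to the equivariant category when $\charc(k)\nmid\car(G)$.

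For the Jacobi-finiteness of $(\overline{Q_G},\overline{W_G})$, recall from Corollary~\ref{Cor: stable derived Morita} that the explicit non-unital dg algebra homomorphism $\bmgamma_{n}(\overline{Q_G},\overline{W_G})\to\bmgamma_{n}(\overline{Q},\overline{W})*G$ induces a triangle equivalence sending $\bmgamma_{n}(\overline{Q_G},\overline{W_G})$ to the perfect module $\epsilon'\bigl(\bmgamma_{n}(\overline{Q},\overline{W})*G\bigr)$ for a suitable idempotent $\epsilon'$ (as in the Remark following Proposition~\ref{Prop: Lemeur-Morita}). Since that module is a direct summand of the free module, its dg endomorphism algebra is $\epsilon'\bigl(\bmgamma_{n}(\overline{Q},\overline{W})*G\bigr)\epsilon'$, so
$$
\bmgamma_{n}(\overline{Q_G},\overline{W_G})\;\simeq\;\epsilon'\bigl(\bmgamma_{n}(\overline{Q},\overline{W})*G\bigr)\epsilon'
$$
as dg algebras. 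Taking $H^{0}$ and using Proposition~\ref{Prop: cohomology of skew}, the Jacobian algebra of $(\overline{Q_G},\overline{W_G})$ is $\epsilon'\bigl(H^{0}(\bmgamma_{n}(\overline{Q},\overline{W}))*G\bigr)\epsilon'$. As $(\overline{Q},\overline{W})$ is Jacobi-finite and $G$ is finite, $H^{0}(\bmgamma_{n}(\overline{Q},\overline{W}))*G$ is finite-dimensional, hence so is this idempotent truncation; thus $(\overline{Q_G},\overline{W_G})$ is Jacobi-finite.

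For the second clause I would first identify the pair $(\add(\bmgamma_G),\cp_G)$ with a pair living over $\bmgamma$. Composing the equivalences $\per(\bmgamma_G)\xrightarrow{\ \sim\ }\per(\bmgamma*G)\xrightarrow{\ \sim\ }\per(\bmgamma)^{G}$ of Proposition~\ref{Prop: Lemeur-Morita} and Proposition~\ref{Prop:equ of cats}, and restricting to additive closures, one obtains $\add(\bmgamma_G)\simeq\add(\bmgamma*G)\simeq\add(\bmgamma)^{G}$, under which $\bmgamma_G$ goes to $\epsilon(\bmgamma*G)$ and then to $(\bmgamma[G],\mathrm{Id})$. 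The frozen projective $e_G\bmgamma_G=\bigoplus_{(i,\rho)\in (F_G)_0}e_{(i,\rho)}\bmgamma_G$ is sent to $\theta(\bmgamma*G)$, with $\theta=\sum_{i\in[G\backslash F_0],\,\rho\in\mathrm{irr}(G_i)}e_i*e_\rho$; by the orbit/stabilizer bookkeeping of Definition~\ref{Def: Q_{G} and F_{G}} (the idempotents $e_i*\id$ with $i$ in one $G$-orbit of $F_0$ are conjugate in $\bmgamma*G$, and $e_i*\id=\sum_{\rho}e_i*e_\rho$) one gets $\add(\theta(\bmgamma*G))=\add\bigl((e*\id)(\bmgamma*G)\bigr)$, which under $\per(\bmgamma*G)\simeq\per(\bmgamma)^{G}$ becomes $\cp^{G}:=\add(e\bmgamma)^{G}$, the full subcategory of $\add(\bmgamma)^{G}$ on the $G$-equivariant objects whose underlying object lies in $\cp=\add(e\bmgamma)$ (here one uses that idempotents may be averaged, legitimate because $\charc(k)\nmid\car(G)$). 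Since functorial finiteness is invariant under equivalence, it remains to show: if the $G$-stable subcategory $\cp$ is functorially finite in $\cd:=\add(\bmgamma)$, then $\cp^{G}$ is functorially finite in $\cd^{G}:=\add(\bmgamma)^{G}$.

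This last statement I would prove by averaging approximations. Given $(X,\alpha)\in\cd^{G}$, pick a right $\cp$-approximation $f\colon P\to X$ in $\cd$; since $\cp$ is $G$-stable, $K(P)=(\bigoplus_{g\in G}{}^{g}P,\mathrm{perm})$ lies in $\cp^{G}$, and $f$ corresponds under $\Hom_{\cd^{G}}(K(P),(X,\alpha))\cong\Hom_{\cd}(P,X)$ to a morphism $\hat f\colon K(P)\to(X,\alpha)$. Because $\charc(k)\nmid\car(G)$ the forgetful functor $U\colon\cd^{G}\to\cd$ is ambidextrous, so $K$ is also right adjoint to $U$; hence for any $(L,\beta)\in\cp^{G}$ and $h\colon(L,\beta)\to(X,\alpha)$ one may factor $U(h)=f\circ s$ in $\cd$ (as $f$ is a right $\cp$-approximation and $L\in\cp$) and let $\bar s\colon(L,\beta)\to K(P)$ correspond to $s$ under $\Hom_{\cd^{G}}((L,\beta),K(P))\cong\Hom_{\cd}(L,P)$; then
$$
U(\hat f\circ\bar s)=\tfrac{1}{\car(G)}\sum_{g\in G}\alpha_g\circ{}^{g}(f\circ s)\circ\beta_g^{-1}=\tfrac{1}{\car(G)}\sum_{g\in G}\alpha_g\circ{}^{g}U(h)\circ\beta_g^{-1}=U(h),
$$
the last equality holding because $h$ is $G$-equivariant; as $U$ is faithful, $h=\hat f\circ\bar s$, so $\hat f$ is a right $\cp^{G}$-approximation. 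The existence of left $\cp^{G}$-approximations is dual. I expect the main obstacle to be not conceptual but a matter of careful bookkeeping: tracking $\cp_G$ through the chain of equivalences to identify it precisely with $\cp^{G}$ (keeping track of orbits, cosets, and irreducible representations of stabilizers, and of the averaging of idempotents), and verifying that the averaged maps above are genuine approximations; once this is in place, both clauses of Assumption~\ref{ass2} follow for $(Q_G,F_G,W_G)$.
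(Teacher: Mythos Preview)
Your argument is correct and follows the same route the paper intends: the paper's own proof is the single sentence ``This follows from Proposition~\ref{Prop: cohomology of skew}, \ref{Prop: Lemeur-Morita} and Corollary~\ref{Cor: stable derived Morita}'', and you have simply unpacked what these citations are doing. Your treatment of the Jacobi-finiteness clause is exactly what is implicit in the paper, and for the functorial-finiteness clause you have supplied details that the paper leaves entirely to the reader.

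One small remark on the averaging computation in the second clause: the displayed formula
\[
U(\hat f\circ\bar s)=\tfrac{1}{\car(G)}\sum_{g\in G}\alpha_g\circ{}^{g}(f\circ s)\circ\beta_g^{-1}
\]
mixes the two adjunctions (the one with $K$ left adjoint to $U$ for $\hat f$, and the one with $K$ right adjoint to $U$ for $\bar s$), and whether the $\tfrac{1}{\car(G)}$ appears depends on how you normalise the second unit/counit pair; as written it is not transparently correct. A cleaner way to finish, avoiding this bookkeeping entirely, is: since $\hat f$ is a morphism in $\cd^{G}$, post-composition with $\hat f$ gives a $G$-equivariant map $\Hom_{\cd}(L,UK(P))\to\Hom_{\cd}(L,X)$; this map is surjective because $U(\hat f)$ is a right $\cp$-approximation (its identity component is $f$); and taking $G$-invariants is exact because $\charc(k)\nmid\car(G)$, so $\Hom_{\cd^{G}}((L,\beta),K(P))\to\Hom_{\cd^{G}}((L,\beta),(X,\alpha))$ is surjective. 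This yields the same conclusion without tracking explicit formulas.
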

\begin{proof}
	This follows from Proposition \ref{Prop: cohomology of skew}, \ref{Prop: Lemeur-Morita} and Corollary \ref{Cor: stable derived Morita}.

\end{proof}

\begin{Def}\rm\cite[Definition 3.1]{KellerWu2023}
	The \emph{relative cluster category} $ \cc(Q,F,W) $ (or denoted by $ \cc(\bmgamma) $) of $ (Q,F,W) $ is defined as the idempotent completion of the Verdier quotient of triangulated categories $$ \mathrm{per}(\bm\Gamma)/\mathrm{pvd}_{e}(\bm\Gamma) .$$
	If $ F=\emptyset $, the \emph{cluster category} associated with $ (Q,W) $ is defined as $ \cc(Q,\emptyset,W) $ and we denote it by $ \cc(Q,W) $.
\end{Def}

 Let $\bmgamma(\overline{Q},\overline{W})$ (or denote by $\overGamma$) is the Ginzburg algebra associated with quiver with potential $(\overline{Q},\overline{W})$. By \cite[Proposition 7.8]{Wu2023}, we have the following homotopy cofiber sequence of dg categories
\[ \bm\Pi_2(F) \rightarrow \bmgamma(Q, F, W) \rightarrow\bmgamma(\overline{Q},\overline{W}).\]

The $G$-action on $(Q,F,W)$ also induces an action on $\overGamma$ by dg algebra automorphisms. 
\begin{Def}\rm\label{Def:Equivariant relative}
	The \emph{G-equivariant relative cluster category} $ \cc(\bmgamma(Q,F,W)*G) $ (or denoted by $ \cc(\bmgamma*G) $) of $ (Q,F,W) $ is defined as the idempotent completion of the Verdier quotient of triangulated categories $$ \mathrm{per}(\bm\Gamma*G)/\mathrm{pvd}(\overGamma*G),$$
	where we view $\mathrm{pvd}(\overGamma*G)$ as a triangulated subcategory of $\per(\bmgamma*G)$ through $$\mathrm{pvd}(\overGamma*G)\simeq(\pvd_{e}(\bmgamma))^{G}\hookrightarrow\per(\bmgamma)^{G}\simeq\per(\bmgamma*G).$$
\end{Def}

Let $(Q_{G},F_{G},W_G)$ be the ice quiver constructed in Definition \ref{Def: Q_{G} and F_{G}} and $\bmgamma_G$ the associated 3-dimensional Ginzburg dg algebra. Denote by $ e_{G}=\sum_{i\in F_{G}}e_{i} $. By Corollary \ref{Cor: stable derived Morita}, we see that $ \cc(\bmgamma*G) $ is equivalent to $\cc(\bmgamma_G)=\per(\bmgamma_G)/\pvd_{e_{G}}(\bmgamma_G)$, i.e. the relative cluster category of $(Q_{G},F_{G},W_G)$.

	If $ F=\emptyset $, the \emph{G-equivariant cluster category} associated with $ (Q,W) $ is given as $$ \cc(\bmgamma(Q,W)*G)=\per(\bmgamma(Q,W)*G)/\pvd(\bmgamma(Q,W)*G),$$
	c.f. \cite{Amiot-Plamondon2018,Paquette-Schiffler2019}.

By Proposition~\ref{Prop:equ of cats} and commutative diagram \ref{diagram:pvd and per}, the action of $ G $ on $ \bm{\Gamma} $ induces an action on relative cluster category $ \cc(\bmgamma) $.
\begin{Prop}\label{Prop:skew-relative-cluster} We have the following equivalence of triangulated categories
	$$ \cc(\bmgamma*G)\iso\cc(\bmgamma)^{G}.$$
\end{Prop}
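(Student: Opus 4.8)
The plan is to deduce the equivalence $\cc(\bmgamma*G)\iso\cc(\bmgamma)^{G}$ from the definitions by passing to Verdier quotients compatibly with the $G$-equivariant structures already established. Recall that $\cc(\bmgamma*G)$ is the idempotent completion of $\per(\bmgamma*G)/\pvd(\overGamma*G)$ and that $\cc(\bmgamma)=\per(\bmgamma)/\pvd_{e}(\bmgamma)$ carries a strict $G$-action induced by the action on $\bmgamma$, so that $\cc(\bmgamma)^{G}$ makes sense. First I would invoke Proposition~\ref{Prop:equ of cats} to get the triangle equivalence $\tilde{K}\colon\per(\bmgamma*G)\iso\per(\bmgamma)^{G}$, and its variant $\pvd(\bmgamma*G)\iso\pvd(\bmgamma)^{G}$; together with the Corollary following Proposition~\ref{Prop:equ of cats}, this identifies $\pvd(\overGamma*G)$ with $(\pvd_{e}(\bmgamma))^{G}$ as a triangulated subcategory of $\per(\bmgamma)^{G}$, exactly as recorded in Definition~\ref{Def:Equivariant relative}. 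So the content of the proposition is that $\tilde{K}$ descends to the Verdier quotients and remains an equivalence after idempotent completion.

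The key step is the following general fact about the $G$-action: if $\ct$ is a triangulated category with a strict $G$-action and $\cs\subseteq\ct$ is a $G$-stable thick subcategory, then the natural functor $(\ct/\cs)^{G}\ra\ct^{G}/\cs^{G}$ is a triangle equivalence, provided $\charc(k)\nmid\car(G)$ (so that the relevant monad is separable and $\ct^{G}$ inherits a triangulated structure by \cite[Lemma 4.4]{Chen2015}). I would establish this by checking that $\ct^{G}/\cs^{G}$ has the universal property of the Verdier quotient of $\ct^{G}$ by $\cs^{G}$: a triangle functor out of $\ct^{G}$ killing $\cs^{G}$ corresponds, via adjunction between the forgetful and induction functors, to a $G$-equivariant datum on a triangle functor out of $\ct$ killing $\cs$, hence factors through $(\ct/\cs)^{G}$; separability of the monad is what makes this correspondence an equivalence rather than merely a faithful comparison. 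Applying this with $\ct=\per(\bmgamma)$, $\cs=\pvd_{e}(\bmgamma)$ (which is $G$-stable since $e\bmgamma e$ is $G$-invariant, by the discussion preceding the Corollary at the end of Section~\ref{Section4:Equivariant categories}), we obtain
$$\bigl(\per(\bmgamma)/\pvd_{e}(\bmgamma)\bigr)^{G}\simeq\per(\bmgamma)^{G}/(\pvd_{e}(\bmgamma))^{G}\simeq\per(\bmgamma*G)/\pvd(\overGamma*G),$$
where the last step uses Proposition~\ref{Prop:equ of cats} and the identification of subcategories from Definition~\ref{Def:Equivariant relative}.

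Finally I would pass to idempotent completions. The left-hand side is $\bigl(\cc(\bmgamma)^{\mathrm{naive}}\bigr)^{G}$ where $\cc(\bmgamma)^{\mathrm{naive}}=\per(\bmgamma)/\pvd_{e}(\bmgamma)$; taking $G$-invariants commutes with idempotent completion up to equivalence (an idempotent in $\ct^{G}$ splits in $(\ct^{\mathrm{ic}})^{G}$, and conversely every object of $(\ct^{\mathrm{ic}})^{G}$ is a summand of an object coming from $\ct^{G}$ because the forgetful functor detects this), so $\bigl(\cc(\bmgamma)^{\mathrm{naive}}\bigr)^{G,\mathrm{ic}}\simeq\cc(\bmgamma)^{G}$; while the right-hand side idempotent completes to $\cc(\bmgamma*G)$ by definition. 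Chaining these equivalences gives $\cc(\bmgamma*G)\iso\cc(\bmgamma)^{G}$, and the functor realizing it is the one induced by $\tilde{K}$. The main obstacle I anticipate is the careful verification of the descent-to-quotient step: one must check that the strict $G$-action on $\per(\bmgamma)$ restricts strictly to $\pvd_{e}(\bmgamma)$ (not merely up to natural isomorphism), that $\pvd_{e}(\bmgamma)^{G}$ is precisely the kernel of $\per(\bmgamma)^{G}\ra\cc(\bmgamma)^{G}$ rather than just contained in it, and that the comparison functor is essentially surjective — this last point is again where separability of the monad, guaranteed by $\charc(k)\nmid\car(G)$, does the real work, exactly as in the proof of Proposition~\ref{Prop:equ of cats}.
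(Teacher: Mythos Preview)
Your strategy is genuinely different from the paper's, and the divergence is instructive. The paper does \emph{not} attempt to prove the abstract claim that Verdier localization commutes with passing to $G$-equivariant objects. Instead it exploits the concrete ``fundamental domain'' $\cw=(\Si^{\geqslant0}\cs)^{\perp}\cap{}^{\perp}(\Si^{\leqslant0}\cs)\subseteq\per\bmgamma$ from \cite[Proposition~3.14]{KellerWu2023}, where $\cs$ is the additive hull of the unfrozen simples: the composite $\cw\hookrightarrow\per\bmgamma\to\cc(\bmgamma)$ is a $k$-linear equivalence, it is visibly $G$-equivariant, hence induces $\cw^{G}\iso\cc(\bmgamma)^{G}$. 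One then identifies $\cw^{G}$, via Lemma~\ref{Lem:goup action on perp} and the derived Morita equivalence of Proposition~\ref{Prop: Lemeur-Morita}, with the analogous fundamental domain $\cw'$ inside $\per(\bmgamma_{G})\simeq\per(\bmgamma)^{G}$, and the latter is equivalent to $\per(\bmgamma)^{G}/\pvd_{e}(\bmgamma)^{G}$ again by \cite[Proposition~3.14]{KellerWu2023} applied to $(Q_{G},F_{G},W_{G})$. This completely sidesteps any general descent-to-quotient principle; the price is that the argument is specific to the relative cluster setting.

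Your key step, by contrast, has a real gap. The assertion $(\ct/\cs)^{G}\simeq\ct^{G}/\cs^{G}$ for a $G$-stable thick $\cs$ is known to be delicate: a $G$-equivariant structure $(\alpha_{g})$ on an object of $\ct/\cs$ consists of isomorphisms \emph{in the quotient}, represented by roofs that need not lift coherently to $\ct$, so essential surjectivity of the comparison functor $\ct^{G}/\cs^{G}\to(\ct/\cs)^{G}$ is not formal. Your universal-property sketch conflates ``triangle functors out of $\ct^{G}$'' with ``$G$-equivariant triangle functors out of $\ct$''; the adjunction $(K,U)$ does not set up such a correspondence at the level of functors into an arbitrary target, and separability of the monad only tells you that $\ct^{G}$ is triangulated and that the comparison in Proposition~\ref{Prop:equ of cats} is an equivalence---it does not automatically descend through a localization. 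A statement of the kind you want can be salvaged with dg enhancements (and the paper does invoke \cite[Corollary~6.10]{Elagin2014} to equip $\cc(\bmgamma)^{G}$ with a triangulated structure), but then you should prove or cite an enhanced version of the descent statement rather than the naive one. As written, the proposal is an outline of the \emph{shape} of an argument whose central lemma is left unproved.
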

\begin{proof}
	The relative cluster category $\cc(\bmgamma)$ has a canonical dg-enhancement. By \cite[Corollary 6.10]{Elagin2014}, the corresponding equivariant category $\cc(\bmgamma)^{G}$ is still a triangulated category. 
	
	The shift functor on \( \cc(\bmgamma)^{G} \) is defined as$\colon$ 
	on objects 
	\[
	\Si(X, (\theta^g))\coloneqq (\Si X, (\Si\theta^g));
	\]
	on morphisms in \( \cc(\bmgamma)^{G} \) shift is the same as on morphisms in \( \cc(\bmgamma) \).
	A triangle 
	\[
	(X_1, (\theta_1^g)) \xrightarrow{\alpha} (X, (\theta^g)) \xrightarrow{\beta} (X_2, (\theta_2^g)) \xrightarrow{\gamma} \Si(X_1, (\theta_1^g))
	\]
	in \( \cc(\bmgamma)^{G} \) is distinguished if and only if the triangle
	\[
	X_1 \xrightarrow{\alpha} X \xrightarrow{\beta} X_2 \xrightarrow{\gamma} \Si X_1
	\]
	is distinguished in \( \cc(\bmgamma) \). Then it is easy to see that the quotient functor $\per\bmgamma\ra\cc(\bmgamma)$ induces a triangulated functor $$\Phi\colon\per(\bmgamma)^{G}/\pvd_{e}(\bmgamma)^{G}\longrightarrow\cc(\bmgamma)^{G}.$$
Let $\cs$ be the subcategory of $\per\bmgamma$ formed by the modules $S_{i}$ associated with unfrozen vertices $i\in Q_{0}\setminus F_{0}$. Then $\cs$ is stable under the action of $G$.

Consider the following subcategory of $\per\bmgamma$
\[
\cw= (\Sigma^{\geqslant0} \cs)^\perp \cap {}^\perp (\Sigma^{\leqslant0}\cs).
\]
By \cite[Proposition 3.14]{KellerWu2023}, the following composition
$$\Psi\colon\cw\hookrightarrow\per\bmgamma\rightarrow\cc(\bmgamma)$$
induces a $k$-linear equivalence
$$\Psi\colon\cw\iso\cc(\bmgamma).$$ 
Note that $\cw$ is stable under the action of $G$ and the functor $\Psi$ mentioned above is $G$-equivariant. By \cite[Lemma 2.1]{CCR2020}, the functor $\Psi$ also induces an equivalence of $k$-linear categories
$$\cw^{G}\iso\cc(\bmgamma)^{G}.$$
By Lemma \ref{Lem:goup action on perp} below, $\cw^{G}$ is equivalent to $(\Sigma^{\geqslant0} \cs^{G})^\perp \cap {}^\perp (\Sigma^{\leqslant0}\cs^{G})\subseteq\per(\bmgamma)^{G}.$

Let $(Q_{G},F_{G},W_G)$ be the ice quiver constructed in Definition \ref{Def: Q_{G} and F_{G}} and $\bmgamma_G$ the associated 3-dimensional Ginzburg dg algebra. Let $\cs'$ be the additive subcategory of $\per\bmgamma_G$ formed by the simple modules $S_{i}$ associated with unfrozen vertices $i\in Q_{G,0}\setminus F_{G,0}$. Consider the following subcategory of $\per\bmgamma_G$
\[
\cw'= (\Sigma^{\geqslant0} \cs')^\perp \cap {}^\perp (\Sigma^{\leqslant0}\cs').
\]
Again by \cite[Proposition 3.14]{KellerWu2023}, the following composition
$$\Psi'\colon\cw'\hookrightarrow\per(\bmgamma_G)\simeq\per(\bmgamma*G)\simeq\per(\bmgamma)^{G}\rightarrow\per(\bmgamma)^{G}/\pvd_{e}(\bmgamma)^{G}$$
induces a $k$-linear equivalence
$$\Psi'\colon\cw'\iso\per(\bmgamma)^{G}/\pvd_{e}(\bmgamma)^{G}.$$ 

The equivalence $\per(\bmgamma_G)\simeq\per(\bmgamma*G)\simeq\per(\bmgamma)^{G}$ induces a $k$-linear equivalence $\cs'\simeq\cs^{G} $. Hence $\cw'$ is equivalent to $\cw^{G}$ and we have the following commutative diagram
\[
\begin{tikzcd}
	\cw'\arrow[dr,"\simeq",swap]\arrow[r,hookrightarrow]&\per(\bmgamma_G)\simeq\per(\bmgamma)^{G}\arrow[r]&\per(\bmgamma)^{G}/\pvd_{e}(\bmgamma)^{G}\arrow[r]&\cc(\bmgamma)^{G}\\
	&\cw^{G}\arrow[u,hook]\arrow[urr,"\simeq",swap].
\end{tikzcd}
\]
Therefore $\Phi\colon\per(\bmgamma)^{G}/\pvd_{e}(\bmgamma)^{G}\longrightarrow\cc(\bmgamma)^{G}$ is an equivalence of triangulated categories.

\end{proof}

\begin{Lem}\label{Lem:goup action on perp}
	Let $\ca$ be a $k$-linear additive category and $\cb$ a full additive subcategory. Let $G$ be a finite group acting on $\ca$ such that $\cb$ is stable under this action. Then we have the following identities of $k$-linear categories
	$$(\cb^{\perp})^{G}=(\cb^{G})^{\perp}$$
	$$(^{\perp}\cb)^{G}=\,^{\perp}(\cb^{G}).$$
\end{Lem}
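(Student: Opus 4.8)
## Proof proposal

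The plan is to prove the two identities separately, but by the same argument applied once to $\cb$ and once with arrows reversed (i.e.\ in the opposite category), so it suffices to treat $(\cb^{\perp})^{G}=(\cb^{G})^{\perp}$. First I would unwind what the two sides mean. An object of $(\cb^{G})^{\perp}$ is a $G$-equivariant object $(X,(\theta^{g}))$ of $\ca^{G}$ such that $\Hom_{\ca^{G}}((B,(\rho^{g})),(X,(\theta^{g})))=0$ for every $(B,(\rho^{g}))\in\cb^{G}$; an object of $(\cb^{\perp})^{G}$ is a $G$-equivariant object $(X,(\theta^{g}))$ whose underlying object $X$ lies in $\cb^{\perp}$, i.e.\ $\Hom_{\ca}(B,X)=0$ for all $B\in\cb$. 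Both sides are full subcategories of $\ca^{G}$ with the same morphisms, so the claim is exactly the equality of their object classes.

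The key step is the comparison of the two $\Hom$-conditions, and here is where $\charc(k)\nmid\car(G)$ enters (implicitly, via the structure we are allowed to use): for $X,Y\in\ca$ carrying $G$-equivariant structures, the forgetful functor $\ca^{G}\to\ca$ has a two-sided adjoint (induction/coinduction agree up to the averaging idempotent), and consequently
\[
\Hom_{\ca^{G}}\big((B,(\rho^{g})),(X,(\theta^{g}))\big)
\;\cong\;\big(\Hom_{\ca}(B,X)\big)^{G},
\]
the $G$-invariants of the $\Hom$-space for the natural $G$-action $f\mapsto \theta^{g}\circ{}^{g}f\circ(\rho^{g})^{-1}$. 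I would either cite the analogue of \cite[Lemma 2.1]{CCR2020} used earlier in the proof of Proposition~\ref{Prop:skew-relative-cluster}, or spell out this one-line averaging argument. Granting this, the direction $(\cb^{\perp})^{G}\subseteq(\cb^{G})^{\perp}$ is immediate: if the underlying $X$ kills all of $\cb$, then a fortiori $\Hom_{\ca}(B,X)=0$ for the underlying object of any $(B,(\rho^{g}))\in\cb^{G}$, hence its $G$-invariants vanish.

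For the reverse inclusion $(\cb^{G})^{\perp}\subseteq(\cb^{\perp})^{G}$ the point is that $\cb$ has \emph{enough} equivariant objects to detect $\cb^{\perp}$: given $B\in\cb$, the induced object $KB=(\bigoplus_{h\in G}{}^{h}B,\mathrm{Id})$ lies in $\cb^{G}$ because $\cb$ is $G$-stable, and by the adjunction $\Hom_{\ca^{G}}(KB,(X,(\theta^{g})))\cong\Hom_{\ca}(B,X)$. So if $(X,(\theta^{g}))\in(\cb^{G})^{\perp}$, then $\Hom_{\ca}(B,X)=0$ for every $B\in\cb$, i.e.\ $X\in\cb^{\perp}$, which is exactly membership in $(\cb^{\perp})^{G}$. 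Finally, the second identity $(^{\perp}\cb)^{G}=\,^{\perp}(\cb^{G})$ follows by applying everything to $\ca^{\mathrm{op}}$ with the induced $G$-action, noting $(\ca^{G})^{\mathrm{op}}\simeq(\ca^{\mathrm{op}})^{G}$ and that left and right perpendiculars swap. I expect the only real subtlety — the ``main obstacle'' — to be making the isomorphism $\Hom_{\ca^{G}}\cong(\Hom_{\ca})^{G}$ precise at the level of the $G$-action on morphism spaces and checking it is compatible with the induction adjunction; everything else is formal diagram chasing.
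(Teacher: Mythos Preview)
Your proposal is correct and follows essentially the same route as the paper: the nontrivial inclusion $(\cb^{G})^{\perp}\subseteq(\cb^{\perp})^{G}$ is obtained exactly as you say, by applying the adjunction $\Hom_{\ca^{G}}(KB,(X,\theta))\cong\Hom_{\ca}(B,X)$ to the induced object $KB\in\cb^{G}$, and the paper dismisses the second identity with ``similar''.

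Two small points where you overshoot. First, the easy inclusion $(\cb^{\perp})^{G}\subseteq(\cb^{G})^{\perp}$ needs no averaging or invariants: morphisms in $\ca^{G}$ are by definition a subspace of morphisms in $\ca$, so if $\Hom_{\ca}(B,X)=0$ then $\Hom_{\ca^{G}}((B,\rho),(X,\theta))=0$ automatically. Second, the hypothesis $\charc(k)\nmid\car(G)$ plays no role here: the left adjunction $K\dashv U$ holds unconditionally, and the paper neither states nor uses the characteristic assumption in this lemma. Your ``main obstacle'' is therefore a non-issue --- the identification $\Hom_{\ca^{G}}\cong(\Hom_{\ca})^{G}$ is the definition of morphisms in the equivariant category.
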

\begin{proof}
	It is clear that we have an inclusion $(\cb^{\perp})^{G}\subseteq(\cb^{G})^{\perp}$. Let $(X,\alpha)$ be an object of $(\cb^{G})^{\perp}$, i.e. we have $\Hom_{\ca^{G}}((Y,\theta),(X,\alpha))=0$ for any object $(Y,\theta)$ in $\cb^{G}$. We will show that $X$ belongs to $\cb^{\perp}$.
	
	Recall that the forgetful functor $U\colon\ca^{G}\ra\ca$ admits a left adjoint $K\colon\ca\ra\ca^{G}, X\mapsto(\oplus_{h\in G}\,^{h}X,\mathrm{Id})$, where $\mathrm{Id}_{g}\colon\,^{g}(\oplus_{h\in G}\,^{h}X)\ra\oplus_{h\in G}\,^{h}X$ is the unique structural isomorphism for any $g\in G$. For each object $B\in\cb$, we have
	\begin{equation*}
		\begin{split}
			\Hom_{\ca}(B,X)&=\Hom_{\ca}(B,U(X,\alpha))\\
			&\simeq\Hom_{\ca^{G}}(K(B),(X,\alpha)).
			\end{split}
	\end{equation*}
	It is clear that $K(B)$ lies in $\cb^{G}$. Therefore $\Hom_{\ca}(B,X)$ vanishes.
	 Hence we see that $X$ belongs to $\cb^{\perp}$, and so $(X,\alpha)$ lies in $(\cb^{\perp})^{G}$. The proof for the second identity is similar.
\end{proof}

\bigskip

Let $ \pr^{F}_{\cd}\bmgamma $ be the following subcategory of $ \pr_{\cd}\bmgamma $
$$ \{\cone(X_{1}\xrightarrow{f} X_{0})\,|\,X_{i}\in \add(\bmgamma)\,\text{and $ \Hom_{\cd}(f, I) $ is surjective for any object $ I\in\cp=\add(e\bmgamma) $}\}. $$
Then we have $ \pr^{F}_{\cd}\bmgamma=\pr_{\cd}\bmgamma\cap^{\perp}\!(\Si^{>0}\cp)\cap(\Si^{<0}\cp)^{\perp} $, where $ \cp=\add(e\relGammabf) $ (see~\cite[pp 12]{KellerWu2023}).

Dually, we define $ \copr_{\cd}^{F}\bmgamma $ as the following subcategory of $ \copr_{\cd}\bmgamma $
$$ \{\Si^{-1}\cone(X_{0}\xrightarrow{f} X_{1})\,|\,X_{i}\in \add(\bmgamma)\,\text{and $ \Hom_{\cd}(P,f) $ is surjective for any object $ P\in\cp=\add(e\bmgamma) $}\}. $$
And we have $ \copr^{F}_{\cd}\bmgamma=\copr_{\cd}\bmgamma\cap\cz $.
Similarly, we define subcategories $$ \pr_{\cc}^{F}\bmgamma=\pr_{\cc}\bmgamma\cap\cy $$ and 
$$ \copr_{\cc}^{F}\bmgamma=\copr_{\cc}\bmgamma\cap\cy $$ of $ \cc $, where $$ \cy=\,^{\perp}\!(\Si^{>0}\cp)\cap(\Si^{<0}\cp)^{\perp}\subseteq\cc. $$
\begin{Def}\rm\cite[Definition 3.21]{KellerWu2023}\label{Def: Higgs cat}
	The \emph{Higgs category} $ \ch(Q,F,W) $ (or denoted by $ \ch(\bmgamma) $) is defined as the full subcategory of $ \pr^{F}_{\cc}\bm{\Gamma}\cap\copr^{F}_{\cc}\bmgamma $ whose objects are those $ X $ such that $ \Hom_{\cc}(\Si^{-1}\bmgamma,X) $ is finite-dimensional.
\end{Def}

Under the assumption \ref{ass2}, we have the following results.
\begin{Thm}\cite[Theorem 4.14, Theorem 4.18]{KellerWu2023}\label{Thm: Thm of KellerWu}
	\begin{itemize}
		\item The Higgs category $\mathcal{H}$ is equal to the full subcategory $\mathcal{E}$ of $\mathcal{C}(Q, F, W)$ defined by
		\[
		\mathcal{E} = \{ X \in \cc(\bmgamma) \mid \operatorname{Hom}_{\cc(\bmgamma)}(X, \Sigma^{>0} \mathcal{P}) = 0 = \operatorname{Hom}_{\cc(\bmgamma)}(\Sigma^{<0} \mathcal{P}, X) \}.
		\]
		\item The Higgs category $\ch(\bmgamma)$ is a Frobenius extriangulated category with projective-injective objects $\cp=\add(e\bmgamma)$. Its stable category $\underline{\ch}=\ch/[\cp]$ is equivalent to $\cc(\overline{Q},\overline{W})$. Moreover, $\bmgamma$ a canonical cluster-tilting object of $\ch$ with endomorphism algebra $J(Q,F,W)$.
	\end{itemize}
\end{Thm}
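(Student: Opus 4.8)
The plan is to follow the strategy of \cite{KellerWu2023}, itself an adaptation of the methods of Amiot and Plamondon, transporting the $t$-structure / co-$t$-structure and recollement machinery for relative Ginzburg algebras into the frozen setting. Write $\cc=\cc(\bmgamma)$, $\cp=\add(e\bmgamma)$ and $\cy={}^{\perp}(\Si^{>0}\cp)\cap(\Si^{<0}\cp)^{\perp}\subseteq\cc$. Spelling out the $\Hom$-vanishing conditions shows that $\cy$ is exactly the category $\ce$ of the statement, so the first bullet asserts $\ch=\cy$. One inclusion is free: by definition $\ch$ is a full subcategory of $\pr^{F}_{\cc}\bmgamma\cap\copr^{F}_{\cc}\bmgamma=(\pr_{\cc}\bmgamma\cap\copr_{\cc}\bmgamma)\cap\cy\subseteq\cy=\ce$.

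For the reverse inclusion $\cy\subseteq\ch$, given $X\in\cy$ I would check (i) $X\in\pr_{\cc}\bmgamma\cap\copr_{\cc}\bmgamma$ and (ii) $\Hom_{\cc}(\Si^{-1}\bmgamma,X)$ is finite-dimensional. For (i), lift $X$ to $\widetilde X\in\per\bmgamma$; the perfect derived category carries the standard co-$t$-structure with co-heart $\add\bmgamma$, so $\widetilde X$ fits in a triangle $P_{1}\to P_{0}\to\widetilde X\to\Si P_{1}$ with $P_{0},P_{1}$ built from $\bmgamma$ up to finitely many shifts. One then trims the shifts using the recollement relating $\pvd(e\bmgamma e)$, $\pvd\bmgamma$ and $\pvd\overGamma$ (see \cite{Wu2023}), the $t$-structure on $\pvd_{e}\bmgamma\simeq\pvd\overGamma$ whose heart $\mod\,J(\overline{Q},\overline{W})$ is finite-dimensional by Assumption \ref{ass2}, and the vanishings $\Hom_{\cc}(X,\Si^{>0}\cp)=0=\Hom_{\cc}(\Si^{<0}\cp,X)$, so that in the quotient $\cc$ the objects $P_{0},P_{1}$ may be taken in $\add\bmgamma$; the dual argument gives $X\in\copr_{\cc}\bmgamma$. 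For (ii), the stable equivalence $\underline{\ch}\simeq\cc(\overline{Q},\overline{W})$ established below reduces $\Hom_{\cc}(\Si^{-1}\bmgamma,X)$ to a $\Hom$-space in the Amiot cluster category $\cc(\overline{Q},\overline{W})$, which is $\Hom$-finite because $(\overline{Q},\overline{W})$ is Jacobi-finite, together with maps factoring through $\cp$, which is finite-dimensional since $\cp$ is functorially finite.

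For the second bullet: the two $\Hom$-vanishing conditions defining $\ce$ are closed under extensions in $\cc$, so $\ce$ is an extension-closed subcategory of the triangulated, hence extriangulated, category $\cc$ and inherits an extriangulated structure. I would then verify that $\cp=\add(e\bmgamma)$ is both $\Ext$-projective and $\Ext$-injective in $\ce$ by computing $\Hom_{\cc}(e\bmgamma,\Si^{\pm1}e\bmgamma)$ through $H^{\pm1}(e\bmgamma e)$, using that $e\bmgamma e$ is the derived preprojective algebra of the frozen quiver, and that $\ce$ has enough projectives and injectives, all in $\cp$, by rotating the presentation $\bmgamma_{1}\to\bmgamma_{0}\to X\to\Si\bmgamma_{1}$ coming from $X\in\pr_{\cc}\bmgamma$ into a conflation of $\ce$ with an object of $\cp$ in the middle (resp. end) term, using functorial finiteness of $\cp$; hence $\ch=\ce$ is Frobenius. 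The cofiber dg functor $\bmgamma\to\overGamma$ induces $\per\bmgamma\to\per\overGamma$ and thus a triangle functor $\cc(\bmgamma)\to\cc(\overline{Q},\overline{W})$ annihilating $\cp$; evaluating on the cluster-tilting object ($\bmgamma\mapsto\overGamma$) and comparing endomorphism algebras ($\End_{\underline{\ch}}(\bmgamma)=H^{0}(\bmgamma)/(e)=J(\overline{Q},\overline{W})$) shows it restricts to an equivalence $\underline{\ch}\iso\cc(\overline{Q},\overline{W})$. Finally $\bmgamma\in\ch$ is rigid since $\Hom_{\cc}(\bmgamma,\Si\bmgamma)=0$ in the relative cluster category; it is cluster-tilting because for $X\in\ch$ with $\Ext^{1}_{\ch}(\bmgamma,X)=0=\Ext^{1}_{\ch}(X,\bmgamma)$ the presentations coming from $X\in\pr_{\cc}\bmgamma\cap\copr_{\cc}\bmgamma$ force $X\in\add\bmgamma$, and $\End_{\ch}(\bmgamma)=\Hom_{\cc}(\bmgamma,\bmgamma)=H^{0}(\bmgamma)=J(Q,F,W)$.

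The main obstacle is step (i) above: producing, for an arbitrary $X\in\cy$, an $\add\bmgamma$-presentation on both sides. This is the relative analogue of Plamondon's fundamental-domain results and of the existence of presentations in Amiot's cluster category; it genuinely requires the co-$t$-structure on $\per\bmgamma$, the $t$-structure on $\pvd_{e}\bmgamma$ and Assumption \ref{ass2}, and the presence of the frozen vertices $F$ makes the bookkeeping of the truncations — controlling which shifts of $\bmgamma$ survive in the Verdier quotient $\cc$ — the delicate point.
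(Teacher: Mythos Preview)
The paper does not prove this statement at all: it is quoted verbatim from \cite[Theorem~4.14, Theorem~4.18]{KellerWu2023} and used as a black box, with no proof given in the present paper. So there is nothing to compare your proposal against here.

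That said, two remarks on your sketch as a standalone argument. First, your step~(ii) is circular: you invoke the equivalence $\underline{\ch}\simeq\cc(\overline{Q},\overline{W})$ to get $\Hom$-finiteness of $\Hom_{\cc}(\Si^{-1}\bmgamma,X)$, but that equivalence is part of the second bullet you are trying to prove, and in \cite{KellerWu2023} its proof in turn relies on already knowing $\ch=\ce$ (via the fundamental-domain description $\cw\iso\cc$). The correct order is to first establish $\Hom$-finiteness directly on the $\per\bmgamma$ side, using the $t$-structure on $\pvd_{e}\bmgamma$ and Jacobi-finiteness of $(\overline{Q},\overline{W})$, and only afterwards deduce the stable equivalence. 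Second, your argument for ``enough projectives/injectives in $\cp$'' is too vague: rotating an $\add\bmgamma$-presentation does not by itself land you in a conflation with a $\cp$-term; one needs to separate the frozen and unfrozen summands of $\bmgamma_0,\bmgamma_1$ and use the relative Calabi--Yau duality to move the unfrozen part across. These are precisely the points where the functorial finiteness of $\cp$ in Assumption~\ref{ass2} does real work, and your sketch does not make that visible.
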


The $G$-equivariant relative cluster category $\cc(\bmgamma*G)$ is equivalent to $\cc(\bmgamma_G)=\per(\bmgamma_G)/\pvd_{e'}(\bmgamma_G)$ (see Definition \ref{Def:Equivariant relative}). We make the following definition.
\begin{Def}\rm\label{Def: Equivariant Higgs}\cite{Wu2023}
	The \emph{G-equivariant Higgs category} is defined to be $\ch(\bmgamma_G)$, i.e. the following full subcategory of $\cc(\bmgamma_G)$
	\[
	\{ X \in \cc(\bmgamma_G) \mid \operatorname{Hom}_{\cc(\bmgamma)}(X, \Sigma^{>0} \mathcal{P}') = 0 = \operatorname{Hom}_{\cc(\bmgamma))}(\Sigma^{<0} \mathcal{P}', X) \},
	\]
where $\cp'=\add(e_{G}\bmgamma_G)$, i.e. the additive subcategory of $\cc(\bmgamma_G)$ generated by $e_{G}\bmgamma_G$. 	
\end{Def}

Since the category $\cp=\add(e\bmgamma)=\add((\sum_{i\in Q_{0}\setminus F_{0}}e_{i})\bmgamma)$ is stable under the $G$-action, by Theorem \ref{Thm: Thm of KellerWu},  the $G$-action on $\cc(\bmgamma)$ induces a $G$-action on Higgs category $\ch(\bmgamma)$.

\begin{Thm}\label{Prop: skew-Higgs}
	The triangle equivalence $\cc(\bmgamma*G)\iso\cc(\bmgamma)^{G}$ in Proposition \ref{Prop:skew-relative-cluster} induces an equivalence of Frobenius extriangulated categories
	$$ \ch(\bmgamma_G)\iso\ch(\bmgamma)^{G}.$$
The category of projective-injective objects of $\ch^{G}$ is $\cp^{G}=\add(e\bmgamma)^{G}$ and $\bmgamma[G]$ is a canonical cluster-titling object of $\ch^{G}$. And $\bmgamma_{G}$ is a basic generator of $\add(\bmgamma_{G})\simeq\add(\bmgamma[G])$.
\end{Thm}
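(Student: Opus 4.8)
The plan is to transport the Frobenius extriangulated structure across the triangle equivalence $\Phi\colon\cc(\bmgamma*G)\iso\cc(\bmgamma)^{G}$ of Proposition \ref{Prop:skew-relative-cluster}, and to identify the Higgs subcategory on each side intrinsically in terms of orthogonality conditions against the projective-injectives. First I would observe that, since $\cp=\add(e\bmgamma)$ is stable under the $G$-action (the frozen idempotent $e=\sum_{i\in F}e_i$ is $G$-stable by Assumption \ref{ass1}), the equivalence $\Phi$ carries the additive subcategory generated by the equivariant lifts of objects of $\cp$ to $\cp^{G}=\add(e\bmgamma)^{G}$; and under the chain of equivalences $\per(\bmgamma_G)\simeq\per(\bmgamma*G)\simeq\per(\bmgamma)^{G}$ (the Remark after Proposition \ref{Prop: Lemeur-Morita} and its Corollary), the projective $e_G\bmgamma_G=\bigoplus_{k\in F_G}e_k\bmgamma_G$ corresponds to $\epsilon(e\bmgamma\ast G)$, hence $\cp'=\add(e_G\bmgamma_G)$ corresponds to $\cp^{G}$. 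This is essentially a bookkeeping step using the dictionary between $\triangle$-modules and equivariant modules (Lemma \ref{Lemma:Tri_A=A^e-bimodule}) together with Proposition \ref{Prop:Props in LeMeur}(4).

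Next I would check that $\Phi$ matches the two Higgs subcategories. By Theorem \ref{Thm: Thm of KellerWu}, $\ch(\bmgamma_G)$ is cut out inside $\cc(\bmgamma_G)$ by the conditions $\Hom(X,\Si^{>0}\cp')=0=\Hom(\Si^{<0}\cp',X)$, and this is exactly the Definition \ref{Def: Equivariant Higgs} of the $G$-equivariant Higgs category; on the other side one has the full subcategory of $\cc(\bmgamma)^{G}$ of equivariant objects $(X,\theta)$ with $\Hom_{\cc(\bmgamma)^{G}}((X,\theta),\Si^{>0}\cp^{G})=0=\Hom_{\cc(\bmgamma)^{G}}(\Si^{<0}\cp^{G},(X,\theta))$. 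The key point is that for $(X,\theta)\in\cc(\bmgamma)^{G}$ one has $\Hom_{\cc(\bmgamma)^{G}}((X,\theta),\Si^{i}(Y,\eta))=0$ for all equivariant $(Y,\eta)\in\add(e\bmgamma)^{G}$ if and only if $\Hom_{\cc(\bmgamma)}(X,\Si^{i}P)=0$ for all $P\in\cp$ — this is the adjunction argument already used in the proof of Lemma \ref{Lem:goup action on perp}, applied to $\ca=\cc(\bmgamma)$ and $\cb=\Si^{i}\cp$, giving $(\cp^{\perp})^{G}=(\cp^{G})^{\perp}$ and dually. Combining this with the identification of $\ch(\bmgamma)$ as the subcategory $\ce$ of Theorem \ref{Thm: Thm of KellerWu}, I conclude $\Phi$ restricts to an equivalence $\ch(\bmgamma_G)\iso\ch(\bmgamma)^{G}$, where $\ch(\bmgamma)^{G}$ denotes the full subcategory of $\cc(\bmgamma)^{G}$ of equivariant objects whose underlying object lies in $\ch(\bmgamma)$.

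Then I would promote this to an equivalence of Frobenius extriangulated categories. The extriangulated structure on $\ch(\bmgamma)$ is inherited from the triangulated structure of $\cc(\bmgamma)$ (conflations are the triangles with all three terms in $\ch(\bmgamma)$), and by the description of distinguished triangles in $\cc(\bmgamma)^{G}$ recalled in the proof of Proposition \ref{Prop:skew-relative-cluster} — a triangle of equivariant objects is distinguished iff its image under the forgetful functor is — the induced extriangulated structure on $\ch(\bmgamma)^{G}$ is the one whose conflations are detected by the forgetful functor $U'\colon\cc(\bmgamma)^{G}\ra\cc(\bmgamma)$. Since $\Phi$ is a triangle equivalence carrying $\ch(\bmgamma_G)$ onto $\ch(\bmgamma)^{G}$ and $\cp'$ onto $\cp^{G}$, it is automatically exact for the two extriangulated structures; that $\cp^{G}$ is precisely the subcategory of projective-injectives of $\ch(\bmgamma)^{G}$ follows because projectivity (resp. injectivity) in an extriangulated category is again an orthogonality condition that is detected after forgetting the $G$-action, using $\charc(k)\nmid\car(G)$ so that the averaging idempotent splits off $\cp^{G}$ as a direct summand of $K(\cp)$. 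Finally, $\bmgamma_G$ is a cluster-tilting object of $\ch(\bmgamma_G)$ by Theorem \ref{Thm: Thm of KellerWu} applied to $(Q_G,F_G,W_G)$, and its image under $\Phi$ is, by the Remark following the equivalence $\Mod(A*G)\ra\Mod(A)^G$, the equivariant object $\bmgamma[G]=(\oplus_{h\in G}{}^{h}\bmgamma,\mathrm{Id})$; transporting along $\Phi$ shows $\bmgamma[G]$ is cluster-tilting in $\ch(\bmgamma)^{G}$, and $\add(\bmgamma_G)\simeq\add(\bmgamma[G])$ with $\bmgamma_G$ a basic generator since $\bmgamma_G$ corresponds to $\epsilon(\bmgamma\ast G)$, the basic version of $\bmgamma[G]$.

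The main obstacle I anticipate is the last verification: checking that the \emph{Frobenius} property (enough projectives and injectives, and projectives $=$ injectives) is preserved, i.e. that $\cp^{G}$ really is the class of projective-injectives of the induced extriangulated structure on $\ch(\bmgamma)^{G}$ rather than merely a class of projective-injectives. This requires knowing that every equivariant object admits an admissible epimorphism from an object of $\cp^{G}$ (and dually), which should be obtained by applying the equivariantization functor $K$ to a $\cp$-cover in $\ch(\bmgamma)$ — functorial finiteness of $\cp$ is part of Assumption \ref{ass2} — and then splitting off a direct summand using the averaging idempotent; making this compatible with the conflation structure (so that the resulting map is genuinely an admissible epic) is the point that needs care. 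Everything else is a transport-of-structure argument along an already-established equivalence.
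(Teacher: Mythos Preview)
Your proposal is correct and follows essentially the same route as the paper: identify $\cp'\simeq\cp^{G}$ under the triangle equivalence, invoke Lemma~\ref{Lem:goup action on perp} and Theorem~\ref{Thm: Thm of KellerWu} to match the two Higgs subcategories via the orthogonality description, and then observe that the extriangulated structure on $\ch(\bmgamma)^{G}$ is the one detected by the forgetful functor so that the restricted equivalence is automatically exact. The one point where you are more explicit than the paper is precisely the ``main obstacle'' you flagged: the paper disposes of the Frobenius property and the identification of $\cp^{G}$ as the full subcategory of projective-injectives by citing \cite[Corollary~2.41]{LD2011} (Demonet's argument for equivariant Frobenius categories, which carries over verbatim to the extriangulated setting), whereas you sketch the averaging/adjunction argument directly. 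Your treatment of the cluster-tilting object is also slightly more detailed than the paper's proof, which simply asserts the statement; your identification of $\bmgamma_{G}$ with $\epsilon(\bmgamma*G)$ as the basic version of $\bmgamma[G]$ is the right one.
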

\begin{proof}
For each $h\in G$, the functor $^{h}()\colon\ch(\bmgamma)\ra\ch(\bmgamma)$ is exact. Then $\ch(\bmgamma)^{G}$ is an extriangulated category with $\mathbb{E}$-extensions of the form \[
(X, \psi) \xrightarrow{f} (X', \psi') \xrightarrow{g} (X'', \psi'')
\]
such that 
\[
 X \xrightarrow{f} X' \xrightarrow{g} X''
\]
is an $\mathbb{E}$-extension in $\ch(\bmgamma)$. By a similar argument in \cite[Corollary 2.41]{LD2011}, we see that $\ch(\bmgamma)^{G}$ is a Frobenius extriangulated category with projective-injective objects $\cp^{G}=\add(e\bmgamma)^{G}$. Moreover it is stably 2-Calabi–Yau.

The equivalence $\cc(\bmgamma_G)\simeq\cc(\bmgamma*G)\iso\cc(\bmgamma)^{G}$ induces a $k$-linear equivalence $\cp'\iso\cp^{G}$. By Lemma \ref{Lem:goup action on perp}, we get an equivalence $\ch(\bmgamma_{G})\iso\ch^{G}$ of $k$-linear categories. It is clear that this equivalence preserves extriangulated structures. Hence it is an equivalence of Frobenius extriangulated categories.

\end{proof}

\begin{Cor}
	The quotient functor $\ch\ra\underline{\ch}=\ch/[\cp]$ induces an equivalence of triangulated categories
	$$\underline{(\ch^{G})}=\ch^{G}/[\cp^{G}]\iso(\underline{\ch})^{G}.$$
\end{Cor}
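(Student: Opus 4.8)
The plan is to produce the comparison functor induced by the quotient $\pi\colon\ch\to\underline{\ch}$ and to check that it is a triangle equivalence, arguing as in \cite[Corollary 2.41]{LD2011}. The canonical functor $\pi\colon\ch\to\underline{\ch}=\ch/[\cp]$ is exact in the extriangulated sense, sends $\cp$ to $0$, and is $G$-equivariant since $\cp$ is stable under the $G$-action; it therefore induces $\pi^{G}\colon\ch^{G}\to(\underline{\ch})^{G}$, which annihilates $\cp^{G}$ and so factors through the additive quotient as a functor $\Phi\colon\underline{(\ch^{G})}=\ch^{G}/[\cp^{G}]\to(\underline{\ch})^{G}$. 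Here $\underline{(\ch^{G})}$ is triangulated because, by Theorem~\ref{Prop: skew-Higgs}, $\ch^{G}$ is Frobenius extriangulated with projective--injectives $\cp^{G}$; and $(\underline{\ch})^{G}$ is triangulated by \cite[Corollary 6.10]{Elagin2014}, since $\underline{\ch}\simeq\cc(\overline{Q},\overline{W})$ (Theorem~\ref{Thm: Thm of KellerWu}) has a dg enhancement. As $\pi^{G}$ carries conflations of $\ch^{G}$ to distinguished triangles of $(\underline{\ch})^{G}$ and intertwines the cosyzygy suspension of $\underline{(\ch^{G})}$ with the suspension of $(\underline{\ch})^{G}$, the functor $\Phi$ is a triangle functor.

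For full faithfulness, recall that for $(X,\psi),(X',\psi')\in\ch^{G}$ one has $\Hom_{\ch^{G}}((X,\psi),(X',\psi'))=\Hom_{\ch}(X,X')^{G}$, the invariants for the $G$-action $g\cdot f=\psi'_{g}\circ{}^{g}\!f\circ\psi_{g}^{-1}$, for which the subspace $[\cp](X,X')$ of maps factoring through $\cp$ is a $G$-submodule (because $\cp$ is $G$-stable). I claim $[\cp^{G}]\bigl((X,\psi),(X',\psi')\bigr)=[\cp](X,X')^{G}$: the inclusion $\subseteq$ is clear, and if a $G$-equivariant $f$ factors as $X\xrightarrow{f_{1}}P\xrightarrow{f_{2}}X'$ with $P\in\cp$, then $f=\epsilon_{X'}\circ K(f_{2})\circ K(f_{1})\circ\iota_{X}$, where $K\colon\ch\to\ch^{G}$ is the induction functor, $\epsilon$ is the counit of $K\dashv U$, and $\iota_{X}\colon(X,\psi)\to K(X)$ is a section of $\epsilon_{X}$ (which exists because $\charc(k)\nmid\car(G)$ makes the monad $UK$ separable); since $K(P)=\bigl(\bigoplus_{h\in G}{}^{h}P,\mathrm{Id}\bigr)\in\cp^{G}$, this writes $f$ through $\cp^{G}$. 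Because $(-)^{G}$ is exact (again $\charc(k)\nmid\car(G)$), applying it to $0\to[\cp](X,X')\to\Hom_{\ch}(X,X')\to\underline{\Hom}_{\ch}(X,X')\to0$ shows that the natural map
$$\Hom_{\underline{(\ch^{G})}}\bigl((X,\psi),(X',\psi')\bigr)=\frac{\Hom_{\ch}(X,X')^{G}}{[\cp](X,X')^{G}}\;\iso\;\underline{\Hom}_{\ch}(X,X')^{G}=\Hom_{(\underline{\ch})^{G}}\bigl(\Phi(X,\psi),\Phi(X',\psi')\bigr)$$
is bijective, so $\Phi$ is fully faithful.

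For essential surjectivity, by separability of the monad defining $(\underline{\ch})^{G}$ every object of $(\underline{\ch})^{G}$ is a direct summand of an induced object $\bigl(\bigoplus_{h\in G}{}^{h}\underline{Z},\mathrm{Id}\bigr)$ with $\underline{Z}\in\underline{\ch}$; choosing a lift $Z\in\ch$ of $\underline{Z}$, such an object is $\pi^{G}(K(Z))$, hence lies in the image of $\Phi$. Since $\Phi$ is fully faithful and $\underline{(\ch^{G})}$ is idempotent complete --- by Theorem~\ref{Prop: skew-Higgs} together with Theorem~\ref{Thm: Thm of KellerWu} applied to $(Q_{G},F_{G},W_{G})$ it is triangle equivalent to the cluster category $\cc(\overline{Q_{G}},\overline{W_{G}})$ --- every idempotent of $\End_{\underline{(\ch^{G})}}(K(Z))$ splits, so the summand descends along $\Phi$. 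Hence $\Phi$ is a triangle equivalence. The step I expect to be the real obstacle is the transport of $G$-equivariant structures through the additive quotient, concentrated in the ideal identity $[\cp^{G}]=[\cp](X,X')^{G}$ and in essential surjectivity; both rest on the separability of $UK$, i.e. on the hypothesis $\charc(k)\nmid\car(G)$, exactly as in Theorem~\ref{Prop: skew-Higgs} and \cite[Corollary 2.41]{LD2011}.
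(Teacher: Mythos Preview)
Your argument is correct, but it follows a genuinely different route from the paper. The paper's proof is external: it assembles the commutative square relating $\cc(Q_{G},F_{G},W_{G})$, $\cc(Q,F,W)^{G}$, $\cc(\overline{Q_{G}},\overline{W_{G}})$ and $\cc(\overline{Q},\overline{W})^{G}$, identifies $\underline{(\ch^{G})}$ with $\cc(\overline{Q_{G}},\overline{W_{G}})$ via Theorem~\ref{Prop: skew-Higgs} and Theorem~\ref{Thm: Thm of KellerWu}, and then reads off the equivalence from the known result $\cc(\overline{Q_{G}},\overline{W_{G}})\simeq\cc(\overline{Q},\overline{W})^{G}$ for ordinary cluster categories. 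Your proof is internal: you construct the comparison functor $\Phi$ directly from $\pi$, establish full faithfulness by the ideal identity $[\cp^{G}]\bigl((X,\psi),(X',\psi')\bigr)=[\cp](X,X')^{G}$ together with exactness of $(-)^{G}$, and obtain essential surjectivity from the summand-of-induced description plus idempotent splitting. This is essentially the general ``stable of equivariant equals equivariant of stable'' argument for Frobenius categories with a separable group action, as in \cite[Corollary 2.41]{LD2011}. What your approach buys is that the functor and the mechanism of the equivalence are made explicit, and the core of the argument would apply to any $G$-stable Frobenius extriangulated category under the hypothesis $\charc(k)\nmid\car(G)$; the paper's approach is shorter because it cashes in the cluster-category machinery already built. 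Note, though, that your essential surjectivity step still appeals to $\underline{(\ch^{G})}\simeq\cc(\overline{Q_{G}},\overline{W_{G}})$ for idempotent completeness, so the two proofs are not entirely independent.
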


\begin{proof}
	We have the following commutative diagram
	\[
	\begin{tikzcd}
		\cc(Q_{G},F_{G},W_{G})\arrow[rr,"\simeq"]\arrow[dd]&&\cc(Q,F,W)^{G}\arrow[dd,dashed]\\
		&\ch(\bmgamma*G)\arrow[ul,hook]\arrow[rr,"\simeq\qquad\qquad"]\arrow[dl,two heads]&&\ch^{G}\arrow[ul,hook]\arrow[dl,two heads]\\
		\cc(\overline{Q_{G}},\overline{W_{G}})\arrow[rr,"\simeq"]&&\cc(\overline{Q},\overline{W})^{G}.
	\end{tikzcd}
	\]
Hence we get the following equivalences of triangulated categories
$$\ch(\bmgamma*G)/[\cp']\iso\cc(\overline{Q_{G}},\overline{W_G})\iso\cc(\overline{Q},\overline{W})^{G}\simeq(\ch/[\cp])^{G}.$$	
By Proposition \ref{Prop: skew-Higgs} above, the triangulated category $\underline{\ch^{G}}$ is equivalent to $(\ch/[\cp])^{G}=(\underline{\ch})^{G}$.
\end{proof}

\subsection{$ G $-stable cluster-tilting subcategories}

\begin{Def}\rm
	Let $(\cc,\mathbb{E},\mathfrak{s})$ be an extriangulated category. A subcategory $\ct$ of $\cc$ is said to be \emph{cluster-tilting subcategory} if it satisfies the following conditions$\colon$
	\begin{itemize}
		\item $\ct$ is functorially finite in $\cc$;
		\item $X\in\ct$ if and only if $\mathbb{E}(X,\ct)=0$;
		\item $X\in\ct$ if and only if $\mathbb{E}(\ct,X)=0$.
	\end{itemize}

A subcategory $\ccr$ of $\cc$ is said to be \emph{rigid} if there are no non trivial extensions between its objects. If moreover every rigid category $\ccr'$ containing $\ccr$ is equal to $\ccr$ , then we say that $\ccr$ is \emph{maximal rigid}.
	
\end{Def}

The action of $G$ on $\cd(\bmgamma)$ stabilizes $\per(\bmgamma)$ and $\pvd_{e}(\bmgamma)$. It induces a strict action of $G$ on $\cc(\bmgamma)$ by strict automorphisms of triangulated categories and also on $\ch$ by strict automorphisms of Frobenius extriangulated categories.


It is easy to see that the adjoint pair of triangle functors
\[
\begin{tikzcd}
	\mathrm{Ten}=-\lten_{\bmgamma}(\bmgamma*G)\colon\cd(\bmgamma)\arrow[r,shift left=1]&\cd(\bmgamma*G)\colon\mathrm{Res}\arrow[l,shift left=1]
\end{tikzcd}
\]

induces the following adjoint pair of extriangulated functors
\[
\begin{tikzcd}
	\mathrm{Ten}\colon\ch(\bmgamma)\arrow[r,shift left=1]&\ch(\bmgamma*G)\colon\mathrm{Res}\arrow[l,shift left=1].
\end{tikzcd}
\]

\begin{Def}\rm
	Let $\cc$ be an extriangulated category, we define $\mathfrak{Add}(\cc)$ to be the class of all full $k$-linear subcategories of $\cc$ which are stable under isomorphisms and direct summands. If $E$ is a collection of objects of $\cc$ , one denotes by $\add(E)$ the smallest category of $\mathfrak{Add}(\cc)$ containing $E$. 
	
	A category $\ct \in\mathfrak{Add}(\cc)$ is said to be finitely generated if $\ct$ is of the form $\add(M)$ for some object $M\in \cc$ . The subclass of $\mathfrak{Add}(\cc)$ consisting in all finitely generated categories will be denoted by $\mathfrak{add}(\cc)$. If $H$ is an exact functor from $\cc$ to another extriangulated category $\cc'$ and $\ct\in \mathfrak{Add}(\cc)$, $H(\ct)$ will denote $\add({H(X)\,|\,X\in\ct } )$.
	
	If a finite group $G$ acts on $\cc$, we denote by  $\mathfrak{Add}(\cc)^{G}$ (resp. $\mathfrak{add}(\cc)^{G}$ ) the class of elements of $\mathfrak{Add}(\cc)$ (resp. $\mathfrak{add}(\cc)$) which are $G$-stable objects of $\mathfrak{Add}(\cc)$ (resp. $\mathfrak{add}(\cc)$).
	
	If $\cc$ is an $\cm$-module category for some monoidal category $\cm$, we denote by  $\mathfrak{Add}(\cc)^{\cm}$ (resp. $\mathfrak{add}(\cc)^{\cm}$ ) the class of elements of $\mathfrak{Add}(\cc)$ (resp. $\mathfrak{add}(\cc)$) which are sub-$\cm$-module categories of $\cc$.
	
\end{Def}

\begin{Prop}\label{Prop:CTO equ1}
	The adjunction $(\mathrm{Ten},\mathrm{Res})$ induces reciprocal bijections between$\colon$
	\begin{itemize}
		\item $\mathfrak{Add}(\ch)^{G}$ and $\mathfrak{Add}(\ch(\bmgamma*G))^{\mathrm{Ten}\circ\mathrm{Res}}=\{\ct\in\mathfrak{Add}(\ch(\bmgamma*G))\,|\,\mathrm{Ten}\circ\mathrm{Res}(\ct)\subseteq\ct\}$;
		\item the set of rigid $\ct\in\mathfrak{Add}(\ch)^{G}$ and the set of rigid $\ct\in\mathfrak{Add}(\ch(\bmgamma*G))^{\mathrm{Ten}\circ\mathrm{Res}}$;
		\item the set of maximal $G$-stable rigid $\ct\in\mathfrak{Add}(\ch)^{G}$ and the set of maximal rigid $\ct\in\mathfrak{Add}(\ch^{G})^{\mathrm{Ten}\circ\mathrm{Res}}$;
		\item the set of cluster-tilting $\ct\in\mathfrak{Add}(\ch)^{G}$ and the set of cluster-tilting $\ct\in\mathfrak{Add}(\ch(\bmgamma*G))^{\mathrm{Ten}\circ\mathrm{Res}}$.
\end{itemize}
Moreover, all these bijections restrict to bijections between the corresponding finitely generated classes.
\end{Prop}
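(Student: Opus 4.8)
The statement is the ice-quiver-with-potential analogue of the results of Demonet \cite[Section 2]{LD2011}, and I would prove it along the same lines, the only new ingredient being the Frobenius extriangulated structure on $\ch(\bmgamma*G)\simeq\ch^{G}$ already provided by Theorem~\ref{Prop: skew-Higgs}. The plan is to write down the two candidate mutually inverse operations explicitly: send $\ct\in\mathfrak{Add}(\ch)^{G}$ to $\Phi(\ct)=\add(\mathrm{Ten}(\ct))$ and send a $\mathrm{Ten}\circ\mathrm{Res}$-stable $\cu\in\mathfrak{Add}(\ch(\bmgamma*G))$ to $\Psi(\cu)=\add(\mathrm{Res}(\cu))$. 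Two elementary facts drive everything, both consequences of $\charc(k)\nmid\car(G)$ together with the fact that $\bmgamma\hookrightarrow\bmgamma*G$ is a split extension by a symmetric coalgebra, so that $(\mathrm{Ten},\mathrm{Res})$ is an \emph{ambidextrous} adjunction ($\mathrm{Ten}$ is simultaneously a left and a right adjoint of $\mathrm{Res}$, induction being equal to coinduction): first, $\mathrm{Res}\circ\mathrm{Ten}\cong\bigoplus_{g\in G}{}^{g}(-)$ as exact endofunctors of $\ch$, so every $X\in\ch$ is a direct summand of $\mathrm{Ten}\circ\mathrm{Res}(X)$ and every ${}^{g}X$ is a summand of $\mathrm{Res}\circ\mathrm{Ten}(X)$; second, the counit $\mathrm{Ten}\circ\mathrm{Res}\to\id_{\ch(\bmgamma*G)}$ splits (via $\tfrac{1}{\car(G)}$ times the obvious section), so every object of $\ch(\bmgamma*G)$ is a direct summand of $\mathrm{Ten}\circ\mathrm{Res}$ of itself, compatibly with morphisms.

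\textbf{The bijection on $\mathfrak{Add}$.} First I would check $\Phi,\Psi$ land in the stated classes: $\mathrm{Ten}\circ\mathrm{Res}\circ\mathrm{Ten}(\ct)=\mathrm{Ten}\big(\bigoplus_{g}{}^{g}\ct\big)\subseteq\add(\mathrm{Ten}(\ct))$ because $\ct$ is $G$-stable, so $\Phi(\ct)$ is $\mathrm{Ten}\circ\mathrm{Res}$-stable; and $^{g}\mathrm{Res}(U)$ is a summand of $\mathrm{Res}(\mathrm{Ten}\circ\mathrm{Res}(U))\in\mathrm{Res}(\cu)$, so $\Psi(\cu)$ is $G$-stable. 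Then $\Psi\Phi(\ct)=\add(\mathrm{Res}\circ\mathrm{Ten}(\ct))=\add\big(\bigoplus_{g}{}^{g}\ct\big)=\ct$ since $\ct$ is $G$-stable and closed under summands, and $\Phi\Psi(\cu)=\add(\mathrm{Ten}\circ\mathrm{Res}(\cu))$ is contained in $\cu$ by the stability hypothesis and contains $\cu$ since each $U\in\cu$ is a summand of $\mathrm{Ten}\circ\mathrm{Res}(U)$. The same computations restrict to finitely generated subcategories, because $\mathrm{Ten}$ and $\mathrm{Res}$ carry a generator $M$ to a generator $\mathrm{Ten}(M)$, resp. $\mathrm{Res}(M)$, of the image subcategory.

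\textbf{Transfer of rigidity, functorial finiteness and the cluster-tilting conditions.} Exactness of $\mathrm{Ten}$ and $\mathrm{Res}$ together with the two adjunctions yield natural isomorphisms $\mathbb{E}_{\ch(\bmgamma*G)}(\mathrm{Ten}X,\mathrm{Ten}Y)\cong\bigoplus_{g\in G}\mathbb{E}_{\ch}(X,{}^{g}Y)$ and, for $U\in\ch(\bmgamma*G)$, $\mathbb{E}_{\ch(\bmgamma*G)}(U,\mathrm{Ten}Y)\cong\mathbb{E}_{\ch}(\mathrm{Res}U,Y)$ together with its left-adjoint counterpart $\mathbb{E}_{\ch(\bmgamma*G)}(\mathrm{Ten}X,U)\cong\mathbb{E}_{\ch}(X,\mathrm{Res}U)$. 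For $G$-stable $\ct$, rigidity of $\ct$ (i.e. $\mathbb{E}_{\ch}(X,{}^{g}Y)=0$ for all $g$ and all $X,Y\in\ct$) is then equivalent to rigidity of $\Phi(\ct)$; the converse direction uses that objects of a $\mathrm{Ten}\circ\mathrm{Res}$-stable $\cu$ are summands of $\mathrm{Ten}\circ\mathrm{Res}$-images. The implication ``$\mathbb{E}_{\ch(\bmgamma*G)}(U,\Phi(\ct))=0\Rightarrow U\in\Phi(\ct)$'' follows because the adjunction isomorphism turns the hypothesis into $\mathbb{E}_{\ch}(\mathrm{Res}U,\ct)=0$, whence $\mathrm{Res}U\in\ct$ by the cluster-tilting property of $\ct$, so $U$ is a summand of $\mathrm{Ten}\circ\mathrm{Res}(U)\in\add(\mathrm{Ten}(\ct))=\Phi(\ct)$; the opposite-variance condition and the implications going through $\Psi$ are symmetric, using the other adjunction and Lemma~\ref{Lem:goup action on perp}. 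Functorial finiteness passes through $\Phi$ and $\Psi$ because each of $\mathrm{Ten},\mathrm{Res}$ possesses both adjoints, hence carries (co)reflections in $\add(\ct)$ to (co)reflections in the $\add$ of the image. Together with the preceding paragraph this establishes bullets (1), (2), (4) and their finitely-generated refinements.

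\textbf{Main obstacle.} The delicate point is bullet (3): one must show that a \emph{maximal} rigid object lying in $\mathfrak{Add}(\ch^{G})^{\mathrm{Ten}\circ\mathrm{Res}}$ corresponds under $\Psi$ exactly to a maximal \emph{$G$-stable} rigid object of $\ch$ — a priori the constraint of $G$-stability / $\mathrm{Ten}\circ\mathrm{Res}$-stability could shrink the poset of rigid subcategories and alter which elements are maximal. I expect to handle this exactly as in \cite[proof of Proposition~2.48 and the preceding lemmas]{LD2011}: if $\cu$ is rigid and $\mathrm{Ten}\circ\mathrm{Res}$-stable then $\mathrm{Res}(\cu)$ is rigid, since its $\mathbb{E}$-self-extensions are $\mathbb{E}_{\ch}(\mathrm{Res}U,\mathrm{Res}V)\cong\mathbb{E}_{\ch(\bmgamma*G)}(U,\mathrm{Ten}\circ\mathrm{Res}V)$, and $\mathrm{Ten}\circ\mathrm{Res}V\in\cu$ forces this to vanish; conversely any $G$-stable rigid $\ct'\supseteq\ct$ has rigid $\mathrm{Ten}\circ\mathrm{Res}$-stable image $\Phi(\ct')\supseteq\Phi(\ct)$, and any rigid $\mathrm{Ten}\circ\mathrm{Res}$-stable $\cu'\supseteq\Phi(\ct)$ has $\Psi(\cu')\supseteq\ct$ rigid and $G$-stable. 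Since $\Phi$ and $\Psi$ are inclusion-preserving and mutually inverse on these classes, maximality is preserved in both directions, giving bullet (3); bullet (4) is then also recovered as the functorially finite case of (3) in the stably $2$-Calabi--Yau setting of Theorems~\ref{Thm: Thm of KellerWu} and~\ref{Prop: skew-Higgs}.
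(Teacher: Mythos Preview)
Your proposal is correct and follows essentially the same route the paper intends: the paper's own proof consists solely of the sentence ``The proof is similar in spirit to \cite[Corollary 5.2.2]{LeMeur2020}'', and your argument is precisely the Demonet/Le~Meur template specialized to the Frobenius extriangulated setting supplied by Theorem~\ref{Prop: skew-Higgs}. In particular your identification of the two key inputs (the isomorphism $\mathrm{Res}\circ\mathrm{Ten}\cong\bigoplus_{g}{}^{g}(-)$ and the split counit coming from $\charc(k)\nmid\car(G)$) and your order-isomorphism argument for bullet~(3) match what one finds when unwinding the cited reference.
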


\begin{proof}
	The proof is similar in spirit to \cite[Corollary 5.2.2]{LeMeur2020}.
\end{proof}

\section{Categorification of skew-symmetrizable cluster algebras with coefficients}\label{Section6}

The aim of this section is to generalize Demonet's result \cite{LD2011} to the setting of ice quivers with potentials. Let $(Q,F,W)$ be an ice quiver with potential satisfies Assumption ~\ref{ass2}. Let $G$ be a finite group acting on $(Q,F,W)$, where the $G$-action satisfies Assumption~\ref{ass1}. Additionally, we assume that $\charc(k)\nmid\car(G)$.

\subsection{A $\mod (k[G])$-linear Structure on the Equivariant Higgs Category}
We denote by $k[G]$ the group algebra of $G$. It is a Hopf algebra, hence $\mod (k[G]) $ is a monoidal category. An object of $\mod (k[G]) $ will be denoted by $(V,r)$, where $V$ is a finite dimensional $k$-vector space, and $r\colon G\ra \mathrm{GL}(V)$ a group homomorphism.

\begin{Prop}\cite[Proposition 2.12]{LD2011}
	The equivariant Higgs category $\ch^{G}$ is a $\mod (k[G]) $-module category in a natural way.
\end{Prop}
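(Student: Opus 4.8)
The plan is to transport Demonet's argument \cite[Proposition 2.12]{LD2011} to the extriangulated setting. Recall from Theorem \ref{Prop: skew-Higgs} that $G$ acts on $\ch=\ch(\bmgamma)$ by strict automorphisms of its Frobenius extriangulated structure, and that $\ch^{G}$ (which is $\ch(\bmgamma_{G})$) has for objects the pairs $(X,\psi)$ with $X\in\ch$ and $\psi=(\psi^{g})_{g\in G}$, $\psi^{g}\colon{}^{g}X\iso X$, subject to $\psi^{g}\circ{}^{g}\psi^{h}=\psi^{gh}$. First I would record the elementary fact that $\ch$, being $k$-linear and additive, is a module category over the monoidal category $\mod(k)$ of finite-dimensional $k$-vector spaces: for such a $V$ one sets $V\otimes_{k}X\coloneqq X^{\oplus\dim_{k}V}$, and $\phi\otimes f$ for morphisms $\phi$ of $\mod(k)$ and $f$ of $\ch$ in the obvious way (the dependence on a choice of basis being absorbed into canonical coherence isomorphisms). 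Since each ${}^{g}(-)$ is a $k$-linear additive automorphism of $\ch$, it commutes with $-\otimes_{k}V$ through a canonical natural isomorphism $c_{g}\colon{}^{g}(V\otimes_{k}X)\iso V\otimes_{k}{}^{g}X$, and, because the $G$-action is strict, the family $(c_{g})_{g}$ is itself a cocycle: $c_{gh}=c_{g}\circ{}^{g}(c_{h})$ after the obvious identifications.

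Next I would define the action bifunctor
\[ \boxtimes\colon\mod(k[G])\times\ch^{G}\longrightarrow\ch^{G},\qquad (V,r)\boxtimes(X,\psi)\coloneqq\bigl(V\otimes_{k}X,\ \bigl((r(g)\otimes\psi^{g})\circ c_{g}\bigr)_{g\in G}\bigr). \]
The underlying object $V\otimes_{k}X$ lies in $\ch$ since $\ch$ has finite biproducts, and each structure map is an isomorphism because $r(g)\in\mathrm{GL}(V)$ and $\psi^{g}$, $c_{g}$ are invertible. The cocycle identity for the new equivariant structure is then a direct check that reduces to three inputs: $r$ being a group homomorphism, the cocycle identity for $\psi$, and the cocycle property of $(c_{g})_{g}$ recorded above. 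One also checks that $\boxtimes$ is functorial and $k$-bilinear in each argument. Exactness in each variable in the extriangulated sense is automatic: a conflation of $\ch^{G}$ is by definition one whose image in $\ch$ is a conflation, so $(V,r)\boxtimes(-)$ sends it to a finite direct sum of a conflation, again a conflation in $\ch$ carrying the induced equivariant structure; and since $\charc(k)\nmid\car(G)$, Maschke's theorem makes $k[G]$ semisimple, so every short exact sequence in $\mod(k[G])$ splits and $(-)\boxtimes(X,\psi)$ is exact for free.

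Then I would supply the structure isomorphisms making $(\ch^{G},\boxtimes)$ a module category. The associativity constraint $\bigl((V,r)\otimes(W,s)\bigr)\boxtimes(X,\psi)\iso(V,r)\boxtimes\bigl((W,s)\boxtimes(X,\psi)\bigr)$ is induced by the associator $(V\otimes_{k}W)\otimes_{k}X\iso V\otimes_{k}(W\otimes_{k}X)$ of $\ch$ over $\mod(k)$; under it the two induced equivariant structures $\bigl((r\otimes s)(g)\bigr)\otimes\psi^{g}=\bigl(r(g)\otimes s(g)\bigr)\otimes\psi^{g}$ and $r(g)\otimes\bigl(s(g)\otimes\psi^{g}\bigr)$ match up. The unit is the trivial representation $(k,\mathbf{1})$ together with $k\otimes_{k}X\iso X$. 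Finally, the pentagon and triangle axioms for $\boxtimes$ follow from the corresponding coherence identities for $\ch$ as a $\mod(k)$-module category, since the group-theoretic decoration imposes no new condition.

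The only real work here is organizational: fixing once and for all the $\mod(k)$-module structure on $\ch$ together with the coherence isomorphisms $c_{g}$, and checking their compatibility with the strict $G$-action — which is exactly what feeds the cocycle identity of the twisted equivariant structure, and hence is the step I expect to require the most care. No genuine obstacle arises; the content is precisely that the $\mod(k)$-module structure of $\ch$ upgrades along the $G$-action to a $\mod(k[G])$-module structure on $\ch^{G}$. Alternatively, one can see this through the identification $\ch^{G}\simeq\ch(\bmgamma*G)$ of Theorem \ref{Prop: skew-Higgs} and the algebra inclusion $k[G]\hookrightarrow\bmgamma*G$, the action being $(V,r)\boxtimes N=V\otimes_{k}N$ with the smash-product module structure; the two descriptions agree.
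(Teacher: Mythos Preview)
Your proposal is correct and follows essentially the same approach as the paper: both start from the $\mod(k)$-module structure on $\ch$ given by $V\otimes X\simeq X^{\dim V}$ and upgrade it to a $\mod(k[G])$-module structure on $\ch^{G}$ via the formula $(V,r)\otimes(X,\psi)=(V\otimes X,\,r\otimes\psi)$, the paper suppressing your isomorphisms $c_{g}$ as identities and deferring the remaining verifications to Demonet's thesis. Your write-up is more explicit about the coherence axioms, exactness, and the alternative description through $\ch(\bmgamma*G)$, but the core content is the same.
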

%
%

Let $(X,\psi)$ and $(Y,\chi)$ be objects of $\ch^{G}$, Let 
$$\bmhom_{\ch^{G}}((X,\psi),(Y,\chi))\coloneqq\Hom_{\ch}(X,Y).$$

If \( g \in G \) and \( f \in \bmhom_{\ch^{G}}((X,\psi),(Y,\chi)) \), define \( gf \in \bmhom_{\ch^{G}}((X,\psi),(Y,\chi)) \) by the following commutative diagram:
\[
\begin{tikzcd}
	X\arrow[r,"gf"]\arrow[d,"\psi_g^{-1}"]&Y\arrow[d,"\chi^{-1}_g"]\\
	^{g}X\arrow[r,"^{g}f"]&^{g}Y.
\end{tikzcd}
\]

This defines a structure of a $\mod(k[G])$ structure on $\bmhom_{\ch^{G}}((X,\psi),(Y,\chi))$, see \cite[Proposition 2.17]{LD2010}.

Recall from Section \ref{Section4:Equivariant categories} that the $G$-action on the Higgs category $\ch(\bmgamma)$  induces the following adjoint pair of extriangulated functors (also see \cite[Section 2.4]{LD2011})
\[
\begin{tikzcd}
 K\colon\ch(\bmgamma)\arrow[r,shift left=1]&\ch(\bmgamma)^{G}\arrow[l,shift left=1]\colon U,
\end{tikzcd}
\]
where $U$ is the forgetful functor and $K$ maps an object $X$ to 
$X[G]=(\oplus_{h\in G}\,^{h}X,\mathrm{Id}).$ Here $\mathrm{Id}_{g}\colon\oplus_{h\in G}\,^{h}X\ra\,^{g}(\oplus_{h\in G}\,^{h}X)$ is the identity map for any $g\in G$. 

\begin{Prop}\cite[Proposition 2.22]{LD2011}\label{Prop:U-kG iso}
There is an isomorphism of functors from \( \ch(\bmgamma)^{G} \) to itself:
\[
(U -)[G] \cong k[G] \otimes -
\]
where \( U\colon \ch(\bmgamma)^{G} \rightarrow \ch(\bmgamma) \) is the forgetful functor and \( k[G] \) denotes the regular representation of \( G \).
\end{Prop}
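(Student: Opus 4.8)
The plan is to produce, for each object $(X,\psi)$ of $\ch(\bmgamma)^{G}$, an explicit isomorphism $\Phi_{(X,\psi)}\colon (U(X,\psi))[G]\iso k[G]\ten(X,\psi)$ and then to check that it is natural in $(X,\psi)$. First I would unwind both functors on a fixed object. By the description of $K$ recalled just before the statement, $(U(X,\psi))[G]=K(X)$ has underlying object $\bigoplus_{h\in G}\,^{h}X$, with equivariant structure $\mathrm{Id}_{g}$ which, under the canonical identifications $\,^{g}(\,^{h}X)=\,^{gh}X$, permutes the summands by $h\mapsto gh$ and acts as the identity on each one. On the other side, for the fixed $\mod k$-module structure on $\ch(\bmgamma)$ one has a canonical isomorphism $k[G]\ten X\cong\bigoplus_{h\in G}X_{h}$, where $X_{h}$ is the copy of $X$ indexed by the basis element $h\in G$, and $k[G]\ten(X,\psi)$ carries the equivariant structure $r_{\mathrm{reg}}\ten\psi$; with the appropriate (left) convention for the regular representation $r_{\mathrm{reg}}$, this structure sends, under the same identifications, the summand $(\,^{g}X)_{h}$ to $X_{gh}$ via $\psi_{g}$.

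With these conventions fixed, I would let $\Phi_{(X,\psi)}$ be the morphism of $\ch(\bmgamma)$ whose restriction to the summand $\,^{h}X$ is $\psi_{h}\colon\,^{h}X\iso X$ followed by the inclusion of the $h$-th copy $X_{h}$. Verifying that $\Phi_{(X,\psi)}$ is a morphism in $\ch(\bmgamma)^{G}$ amounts, summand by summand, to checking the square relating $\mathrm{Id}_{g}$ and $r_{\mathrm{reg}}\ten\psi$: following the $h$-indexed component of $\,^{g}\!\big((U(X,\psi))[G]\big)$ one way gives the identity permutation followed by $\psi_{gh}$ into the $gh$-th copy, and the other way gives $\,^{g}(\psi_{h})$ followed by $\psi_{g}$, again into the $gh$-th copy; these coincide by the cocycle identity $\psi_{g}\circ\,^{g}(\psi_{h})=\psi_{gh}$ defining an equivariant object. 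Each $\psi_{h}$ being invertible, $\Phi_{(X,\psi)}$ is an isomorphism, and naturality in $(X,\psi)$ is immediate: for a morphism $f\colon(X,\psi)\to(Y,\chi)$ of $\ch(\bmgamma)^{G}$, compatibility of $\Phi$ with $K(Uf)$ and $k[G]\ten f$ reduces on each summand to the equivariance relation $\chi_{h}\circ\,^{h}f=f\circ\psi_{h}$.

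All the individual verifications are routine, so I expect the only real difficulty to be the bookkeeping: matching the permutation $h\mapsto gh$ built into the structural isomorphisms $\mathrm{Id}_{g}$ of $K$ with the permutation coming from the regular representation, and keeping the handedness conventions consistent with those chosen for the $\mod(k[G])$-module structure on $\ch(\bmgamma)^{G}$. For this reason I would fix all indexing conventions once at the outset and propagate them. As a sanity check and an alternative route, one may argue more conceptually: transporting everything along $\ch(\bmgamma)^{G}\simeq\ch(\bmgamma*G)$, under which $K$ corresponds to $\mathrm{Ten}=-\lten_{\bmgamma}(\bmgamma*G)$ and $U$ to $\mathrm{Res}$, the assertion becomes the Frobenius-reciprocity isomorphism $\mathrm{Res}(N)\lten_{\bmgamma}(\bmgamma*G)\cong k[G]\ten_{k}N$ of $\bmgamma*G$-modules, natural in $N$ — but the underlying bimodule computation is essentially the one just described.
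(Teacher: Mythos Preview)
The paper does not supply its own proof of this proposition: it is stated with a citation to \cite[Proposition 2.22]{LD2011} and no proof environment follows. So there is nothing in the present paper to compare against. That said, your argument is correct and is exactly the standard one underlying the cited result: build the isomorphism summandwise via the structure maps $\psi_{h}\colon{}^{h}X\to X$, verify $G$-equivariance from the cocycle relation $\psi_{g}\circ{}^{g}(\psi_{h})=\psi_{gh}$, and deduce naturality from the equivariance condition on morphisms. The alternative route you sketch through $\ch(\bmgamma)^{G}\simeq\ch(\bmgamma*G)$ and the bimodule identity $\mathrm{Res}(N)\otimes_{\bmgamma}(\bmgamma*G)\cong k[G]\otimes_{k}N$ is also valid and amounts to the same computation. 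Your only caveat is the right one: the paper's convention for $(r\otimes\psi)_{g}^{-1}$ and the direction of the permutation in $\mathrm{Id}_{g}$ must be fixed consistently at the outset, but once that is done every step is routine.
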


\begin{Def}\rm
	A subcategory $\ct$ of $\ch(\bmgamma)$ (resp. of $\ch^{G}$) is said to be \emph{rigid} if there are no non trivial extensions between its objects. If moreover every rigid category $\ct'$ containing $\ct$ is equal to $\ct$ , then we say that $\ct$ is \emph{maximal rigid}.
\end{Def}

\begin{Prop}\cite[Proposition 3.5]{LD2011}\label{Prop:G-stabe and K[G]-stable}
The adjunction $(K,U)$ induces reciprocal bijections between$\colon$
\begin{itemize}
	\item $\mathfrak{Add}(\ch)^{G}$ and $\mathfrak{Add}(\ch^{G})^{\mod(k[G])}$;
	\item the set of rigid $\ct\in\mathfrak{Add}(\ch)^{G}$ and the set of rigid $\ct\in\mathfrak{Add}(\ch^{G})^{\mod(k[G])}$;
	\item the set of maximal $G$-stable rigid $\ct\in\mathfrak{Add}(\ch)^{G}$ and the set of maximal $\mod(k[G])$-stable rigid $\ct\in\mathfrak{Add}(\ch^{G})^{\mod(k[G])}$;
	\item the set of cluster-tilting $\ct\in\mathfrak{Add}(\ch)^{G}$ and the set of cluster-tilting $\ct\in\mathfrak{Add}(\ch^{G})^{\mod(k[G])}$.
\end{itemize}
Moreover, all these bijections restrict to bijections between the corresponding finitely generated classes.
\end{Prop}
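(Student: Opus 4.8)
The plan is to realise the two correspondences by the explicit assignments $\Phi\colon\ct\mapsto\add(K\ct)$ and $\Psi\colon\cu\mapsto\add(U\cu)$ and to transport every property through the adjunction $(K,U)$. The two computational inputs are: (i) for $X\in\ch(\bmgamma)$ one has $UK(X)\cong\bigoplus_{h\in G}{}^{h}X$, and $X$ is visibly a direct summand of it; (ii) for $(X,\psi)\in\ch^{G}$ one has $KU(X,\psi)\cong k[G]\otimes(X,\psi)$ by Proposition \ref{Prop:U-kG iso}, with $(X,\psi)$ a direct summand via the trivial subrepresentation $k\hookrightarrow k[G]$, which exists because $\charc(k)\nmid\car(G)$. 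Combining these with Maschke's theorem — every $(V,r)\in\mod(k[G])$ is a summand of a finite power of the regular representation $k[G]$ — gives well-definedness: $\add(K\ct)$ is $\mod(k[G])$-stable because $(V,r)\otimes KX$ is a summand of a power of $k[G]\otimes KX\cong KUK(X)$ and $UK(X)=\bigoplus_{h}{}^{h}X\in\ct$ by $G$-stability of $\ct$; dually $\add(U\cu)$ is $G$-stable because ${}^{g}X$ is a summand of $UKU(X,\psi)=U\!\bigl(k[G]\otimes(X,\psi)\bigr)\in U\cu$. They are mutually inverse since $UK\ct\subseteq\ct\subseteq\add(UK\ct)$ by (i), and $KU\cu\subseteq\cu\subseteq\add(KU\cu)$ by (ii) together with $\mod(k[G])$-stability; and both assignments are clearly order-isomorphisms for inclusion.

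For the compatibility with the extriangulated structures, note that $K$ and $U$ are exact functors preserving projective–injectives ($K(\cp)\subseteq\cp^{G}$ and $U(\cp^{G})\subseteq\cp$), hence descend to adjoint triangle functors $\underline{K}\dashv\underline{U}$ between the stable triangulated categories $\underline{\ch}$ and $\underline{\ch^{G}}$ of Theorem \ref{Prop: skew-Higgs}; moreover, since $G$ is finite and $\charc(k)\nmid\car(G)$, induction and coinduction agree, so $K$ is also a right adjoint of $U$. Writing $\mathbb{E}(A,B)\cong\underline{\Hom}(A,\Si B)$ one then obtains $\mathbb{E}_{\ch^{G}}(KX,KY)\cong\bigoplus_{h\in G}\mathbb{E}_{\ch}(X,{}^{h}Y)$, $\mathbb{E}_{\ch}(X,U\mathcal{Y})\cong\mathbb{E}_{\ch^{G}}(KX,\mathcal{Y})$ and $\mathbb{E}_{\ch^{G}}(\mathcal{X},KT)\cong\mathbb{E}_{\ch}(U\mathcal{X},T)$. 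The first formula gives the rigidity bijection (the $h=\id$ summand detects $\mathbb{E}_{\ch}(X,Y)$), and the maximal‑rigid bijection follows formally from the rigidity bijection together with $\Phi,\Psi$ being inverse order-isomorphisms. For cluster-tilting subcategories, combining the three displayed formulas with (i), (ii), Maschke and the defining vanishing conditions yields $X\in\ct\Leftrightarrow KX\in\add(K\ct)$ and matches up the two $\mathbb{E}$-vanishing conditions under $\Phi$ and $\Psi$; functorial finiteness is preserved because the image under $K$ of a right (resp. left) $\ct$-approximation of $U\mathcal{X}$, composed with the split counit $KU\mathcal{X}\twoheadrightarrow\mathcal{X}$ (resp. the split unit), is a right (resp. left) $\add(K\ct)$-approximation of $\mathcal{X}$, and dually. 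Finally, the finitely generated refinement follows by using $\bigoplus_{g\in G}{}^{g}M$ as a generator of a $G$-stable $\add M$ and $k[G]\otimes(N,\chi)$ as a generator of a $\mod(k[G])$-stable $\add(N,\chi)$, so that $\Phi(\add M)=\add(KM)$ and $\Psi(\add(N,\chi))=\add(UN)$ are again finitely generated.

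The main obstacle is the passage to stable categories: checking carefully that $K$ and $U$ really descend to triangle functors forming an ambidextrous adjoint pair — the only place where $\charc(k)\nmid\car(G)$ and Proposition \ref{Prop:U-kG iso} are genuinely needed — and handling functorial finiteness for cluster-tilting subcategories that are not finitely generated; everything else is bookkeeping. As a shortcut, the whole statement can instead be deduced from Proposition \ref{Prop:CTO equ1}: under the identifications of Theorem \ref{Prop: skew-Higgs} the pair $(\mathrm{Ten},\mathrm{Res})$ corresponds to $(K,U)$ and $\mathrm{Ten}\circ\mathrm{Res}$ to $k[G]\otimes(-)$ (Proposition \ref{Prop:U-kG iso}), and by Maschke a subcategory in $\mathfrak{Add}(\ch^{G})$ is stable under $k[G]\otimes(-)$ if and only if it is a sub-$\mod(k[G])$-module category; hence $\mathfrak{Add}(\ch^{G})^{\mathrm{Ten}\circ\mathrm{Res}}=\mathfrak{Add}(\ch^{G})^{\mod(k[G])}$ and the four bijections of Proposition \ref{Prop:CTO equ1} specialise to the four asserted here.
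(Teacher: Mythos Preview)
Your proposal is correct and follows essentially the same route as the paper, which does not give a self-contained argument but simply refers the reader to Demonet's original proof in \cite[Proposition~3.5]{LD2011}; your reconstruction of that argument via the assignments $\ct\mapsto\add(K\ct)$, $\cu\mapsto\add(U\cu)$, the identities $UK\cong\bigoplus_{h}{}^{h}(-)$ and $KU\cong k[G]\otimes(-)$, and Maschke's theorem is exactly what lies behind that reference. The alternative shortcut you sketch at the end---deducing the statement from Proposition~\ref{Prop:CTO equ1} via the equivalence of Theorem~\ref{Prop: skew-Higgs} and the identification of $\mathrm{Ten}\circ\mathrm{Res}$-stability with $\mod(k[G])$-stability---is a genuinely different packaging not made explicit in the paper, and has the virtue of avoiding the repeated bookkeeping with approximations and the ambidextrous adjunction.
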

\begin{proof}
	The proof is similar in spirit to \cite[Proposition 3.5]{LD2011}.
\end{proof}
\bigskip
%
%
%

%



\begin{Def}\rm
	Let $\cd$ be a $\mod(k[G])$-stable cluster-tilting subcategory of  $\ch(\bmgamma)^{G}$ and let $X\in\cd$ be indecomposable. A \emph{$\mod(k[G])$-loop} of $\cd$ at $X$ is an irreducible morphism $X\ra X'$ of $\cd$ where $X'\in\add(k[G]\ten X)$ is indecomposable. A \emph{$\mod(k[G])$-2-cycle} of $\cd$ at $X$ is a pair of irreducible morphisms $X\ra Y$ and $Y\ra X'$ of $\cd$ where $X'\in\add(k[G]\ten X)$ is indecomposable.
\end{Def}

\begin{Def}\rm
	Let $\ct$ be a $G$-stable cluster tilting subcategory of  $\ch(\bmgamma)$ and let $X\in\ct$ be indecomposable. A \emph{$G$-loop} of $\ct$ at $X$ is an irreducible morphism $X\ra\, ^{g}X$ of $\cd$ where $g\in G$. A \emph{$G$-$2$-cycle} of $\ct$ at $X$ is a pair of irreducible morphisms $X\ra Y$ and $Y\ra\,^{g}X$ of $\cd$ where $g\in G$.
\end{Def}

\begin{Lem}\cite[Lemma 3.24]{LD2011}
	Let $\cd$ be a $\mod(k[G])$-stable cluster tilting subcategory of  $\ch(\bmgamma)^{G}$. Let $X\in\cd$ be indecomposable and $X'$ be a direct summand of $U(X)$. Then $\cd$ has no $\mod(k[G])$-loops (resp. $\mod(k[G])$-2-cycles) at $X$ if and only if $U(\cd)$ has no $G$-loops (resp. $G$-2-cycles) at $X'$.
	
\end{Lem}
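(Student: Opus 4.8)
The plan is to transport loops and $2$-cycles across the forgetful functor $U\colon\ch(\bmgamma)^{G}\to\ch(\bmgamma)$ and its adjoint $K$ (since $\charc(k)\nmid\car(G)$, $K$ is both left and right adjoint to $U$, and $K\circ U\cong k[G]\ten-$ by Proposition~\ref{Prop:U-kG iso}), following \cite[Lemma~3.24]{LD2011}. First I would assemble the structural input. By Proposition~\ref{Prop:G-stabe and K[G]-stable}, $\ct:=U(\cd)$ is a $G$-stable cluster-tilting subcategory of $\ch(\bmgamma)$ corresponding to $\cd$ under that bijection, so the indecomposables of $\cd$ are exactly the indecomposable summands of the objects $K(Y)$, $Y\in\ct$. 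Since $(X,\psi)$ is indecomposable, a Krull--Schmidt argument applied to the isomorphisms $\psi_{g}\colon{}^{g}U(X)\iso U(X)$ shows that $U(X)$ is a multiple of a single $G$-orbit; writing $X'$ for one of its indecomposable summands and $G_{X'}$ for the stabilizer of $X'$, the indecomposables of $\cd$ lying over this orbit are parametrised by $\mathrm{irr}(G_{X'})$, and $X$ corresponds to some $\rho\in\mathrm{irr}(G_{X'})$. In particular $\add(k[G]\ten X)=\add(K(U(X)))$ has, as its indecomposables, precisely the indecomposables of $\cd$ over the $G$-orbit of $X'$; note also that ``$\ct$ has a $G$-loop (resp.\ $G$-$2$-cycle) at $X'$'' does not depend on the chosen summand $X'$, since the summands of $U(X)$ form one $G$-orbit and $G$ acts on the AR quiver of $\ct$.

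Next I would express loops and $2$-cycles through Auslander--Reiten quivers. Both $\cd$ and $\ct$ are Hom-finite Krull--Schmidt cluster-tilting subcategories of $2$-Calabi--Yau Frobenius extriangulated categories, hence possess AR theory; a $\mod(k[G])$-loop (resp.\ $G$-loop) at an indecomposable is then an arrow of the AR quiver of $\cd$ (resp.\ of $\ct$) from that vertex to a vertex lying over the $G$-orbit of $X'$, and a $\mod(k[G])$-$2$-cycle (resp.\ $G$-$2$-cycle) is a length-$2$ path with the same endpoint restriction. So it suffices to show: the AR quiver of $\cd$ has an arrow (resp.\ a length-$2$ path) from $X$ to a vertex over the orbit of $X'$ if and only if the AR quiver of $\ct$ has an arrow (resp.\ a length-$2$ path) from $X'$ to ${}^{g}X'$ for some $g\in G$.

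The core step is the skew-group/covering description of the AR quiver of $\cd$ in terms of that of $\ct$, in the spirit of Reiten--Riedtmann~\cite{RR85}, adapted to the extriangulated setting. I would verify that $U$ is exact, faithful and reflects isomorphisms, that $U$ carries the AR conflation of $\cd$ ending at an indecomposable $Z$ to a direct sum of AR conflations of $\ct$ ending at the summands of $U(Z)$, and that $U\circ\tau_{\cd}\cong\tau_{\ct}\circ U$ up to the $G$-action; combining this with the biadjunction (which, for $X$ over $X'$ and $Z$ over $Z'$, decomposes $\Hom_{\cd}(X,Z)$ naturally in terms of the spaces $\Hom_{\ct}(X',{}^{g}Z')$ with their $G_{X'}$-action) and its compatibility with radical filtrations, one obtains that the space of irreducible morphisms $X\to Z$ in $\cd$ is $\Hom_{G_{X'}}(\rho, M\ten\rho')$, where $\rho,\rho'\in\mathrm{irr}(G_{X'})$ correspond to $X,Z$ and $M:=\bigoplus_{g\in G}\mathrm{Irr}_{\ct}(X',{}^{g}X')$ is the $G_{X'}$-module of irreducible morphisms over the orbit. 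Hence there is an arrow from $X$ to some indecomposable over the orbit of $X'$ in the AR quiver of $\cd$ if and only if $M\neq0$ (the nontrivial implication being that $M\neq0$ forces $\rho$ to occur in $M\ten k[G_{X'}]$, hence in $M\ten\rho'$ for some $\rho'$), which holds if and only if $\mathrm{Irr}_{\ct}(X',{}^{g}X')\neq0$ for some $g$, i.e.\ $\ct$ has a $G$-loop at $X'$. For $2$-cycles one applies this bookkeeping twice, inserting an intermediate indecomposable $Y$ over some $Y'\in\ct$ and normalising the first arrow by the $G$-action so that it starts at $X'$, which turns $\mod(k[G])$-$2$-cycles at $X$ into $G$-$2$-cycles at $X'$ and conversely; this gives the claimed equivalence.

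The main difficulty is this last step: making the Reiten--Riedtmann covering picture of AR quivers precise for cluster-tilting subcategories of Frobenius extriangulated (Higgs) categories rather than for module categories over Artin algebras — namely establishing the existence and good behaviour of AR conflations in $\cd$ and $\ct$, the transport of AR conflations and of $\tau$ along $U$, and the fact that the biadjunction isomorphism respects the radical filtration so that the irreducible-morphism count comes out as stated. Granting these, the remaining steps are the formal representation-theoretic translations above, parallel to \cite[Lemma~3.24]{LD2011}.
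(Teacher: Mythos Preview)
The paper does not give its own proof of this lemma: it is stated with the citation \cite[Lemma~3.24]{LD2011} and nothing more, so there is no in-paper argument to compare against. Your proposal is therefore an attempt to reconstruct Demonet's original proof in the present extriangulated context.

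Your overall strategy---transporting irreducible morphisms across the biadjoint pair $(K,U)$ using $K\circ U\cong k[G]\otimes-$, decomposing $U(X)$ into a single $G$-orbit via Krull--Schmidt, and reading off the irreducible-morphism spaces via the covering formula $\Hom_{G_{X'}}(\rho,M\otimes\rho')$---is the right one and matches Demonet's line of argument. However, you are making the final step harder than it needs to be. You frame the core computation in terms of Auslander--Reiten theory (AR conflations, transport of $\tau$ along $U$, etc.) and then identify this as the ``main difficulty'' in the extriangulated setting. But irreducible morphisms in a Hom-finite Krull--Schmidt additive category $\cd$ are defined purely by the radical, namely as nonzero elements of $\mathrm{rad}_{\cd}(X,Y)/\mathrm{rad}^2_{\cd}(X,Y)$; no AR conflations or extriangulated structure are required. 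What you actually need is that $U$ and $K$ preserve and reflect the radical (they do: both are additive, $U$ is faithful and reflects isomorphisms, $K$ sends indecomposables to objects whose summands are controlled by Proposition~\ref{Prop:U-kG iso}), and that the adjunction isomorphisms are compatible with the radical filtration---which is a direct check using that the unit and counit are split by the averaging idempotent $\frac{1}{|G|}\sum_{g}g$. Once you drop the AR-theoretic packaging, the ``main difficulty'' you flag disappears, and the rest of your bookkeeping (the $M\neq 0$ argument for loops, the two-step version for $2$-cycles) goes through as written.
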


%
Since $\add(\bmgamma)$ is a canonical $G$-stable cluster tilting subcategory of  $\ch(\bmgamma)$, by Proposition \ref{Prop:G-stabe and K[G]-stable}, the subcategory $K(\add(\bmgamma))=\add(K(\bmgamma))=\add(\bmgamma[G])$ is a canonical $\mod(k[G])$-stable cluster tilting subcategory of  $\ch(\bmgamma)^{G}$ and
\begin{equation*}
	\begin{split}
		U(\add(K(\bmgamma)))&=U(K(\add(\bmgamma)))\\
		&=\add(\oplus_{h\in G}\,^{h}\bmgamma)\\
		&=\add(\bmgamma).
	\end{split}
\end{equation*} 
Let $(Q_{G},F_{G})$ be the ice quiver constructed in Definition \ref{Def: Q_{G} and F_{G}}.  
\begin{Cor}
The $\mod(k[G])$-stable cluster tilting subcategory $\add(K(\bmgamma))$	has no $\mod(k[G])$-loops (resp. $\mod(k[G])$-2-cycles) at each indecomposable object if and only if $Q_{G}$ has no loops (resp. 2-cycles). 
\end{Cor}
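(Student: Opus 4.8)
The plan is to use the preceding lemma to replace the $\mod(k[G])$-condition on $\ch^{G}$ by a $G$-condition on $\ch(\bmgamma)$, and then to compare the latter with the quiver $Q_G$ of Definition~\ref{Def: Q_{G} and F_{G}}. So first I would apply \cite[Lemma 3.24]{LD2011} to the $\mod(k[G])$-stable cluster-tilting subcategory $\cd=\add(K(\bmgamma))=\add(\bmgamma[G])$, for which we have already seen that $U(\cd)=\add(\bmgamma)$. Writing $\bmgamma[G]=\bigoplus_j X_j$ with the $X_j$ indecomposable in $\ch^{G}$, so that $\bigoplus_{h\in G}{}^{h}\bmgamma=\bigoplus_j U(X_j)$ in $\ch(\bmgamma)$, the Krull--Schmidt theorem shows that every indecomposable summand of $\bmgamma$ occurs in some $U(X_j)$ while every indecomposable of $\cd$ is one of the $X_j$; since for a fixed $X_j$ the indecomposable summands of $U(X_j)$ form a single $G$-orbit, whether $\add(\bmgamma)$ has a $G$-loop (resp.\ $G$-2-cycle) at such a summand is independent of the summand chosen. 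Hence the lemma yields at once: $\add(K(\bmgamma))$ has no $\mod(k[G])$-loops (resp.\ no $\mod(k[G])$-2-cycles) at any indecomposable object if and only if $\add(\bmgamma)$ has no $G$-loops (resp.\ no $G$-2-cycles) at any indecomposable object.

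Next I would read the $G$-condition off the quiver $Q$. By Theorem~\ref{Thm: Thm of KellerWu}, $\bmgamma$ is a cluster-tilting object of $\ch(\bmgamma)$ with $\End_{\ch(\bmgamma)}(\bmgamma)=J(Q,F,W)$, whose Gabriel quiver is $Q$; the induced identifications match the indecomposable summands of $\bmgamma$ with the vertices of $Q$, the spaces of irreducible maps between them with the arrow spaces of $Q$, and the $G$-action on $\add(\bmgamma)$ with the given $G$-action on $Q$. Consequently a $G$-loop of $\add(\bmgamma)$ is an arrow of $Q$ joining two vertices of one $G$-orbit (with $g\cdot i=i$ giving an ordinary loop of $Q$), and a $G$-2-cycle is a pair of arrows $i\to w$ and $w\to g\cdot i$ of $Q$ with $i$ and $g\cdot i$ in a common $G$-orbit. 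The task therefore reduces to: show that $Q$ contains such a configuration if and only if $Q_G$ has a loop (resp.\ a 2-cycle).

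This last equivalence is the heart of the matter, and I would extract it directly from the construction of $Q_G$. Fix an orbit representative $i$ with stabilizer $G_i$; since $|G_i|$ divides $\car(G)$ and $\charc(k)\nmid\car(G)$, the algebra $kG_i$ is semisimple. The arrows of $Q_G$ between the vertices $(i,\rho)$ and $(i,\sigma)$ lying over $i$ are controlled by the $G_i$-modules $M(i,i;\sigma)=\bigoplus_{y\in[G/G_i]}(e_i(kQ_1)e_{y\cdot i})*(y\,(kG_i)e_\sigma)$; as each summand $y\,(kG_i)e_\sigma$ is a nonzero translate of the irreducible $\sigma$, $M(i,i;\sigma)\neq0$ precisely when some $e_i(kQ_1)e_{y\cdot i}\neq0$, i.e.\ precisely when $Q$ has an arrow joining two vertices of the orbit of $i$. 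One then decomposes each $M(i,i;\sigma)$ into $G_i$-isotypic components and uses Maschke's theorem together with the normalization $\rho\cong kG_ie_\rho$ to pass between the non-vanishing of these modules and the presence of a loop $(i,\rho)\to(i,\rho)$ in $Q_G$; the analogous computation with $M(i,j;\tau)$ and $M(j,i;\sigma)$ for an orbit representative $j$ handles 2-cycles. The main obstacle is exactly this representation-theoretic bookkeeping over stabilizers and the double cosets $G_i\backslash G/G_i$: when $G_i$ acts non-trivially on the relevant arrow spaces of $Q$ one must verify that a $G$-loop really produces a genuine loop of $Q_G$ (and not merely a 2-cycle) and a $G$-2-cycle a genuine 2-cycle, which is precisely the analysis carried out in \cite{LD2011} and \cite{LeMeur2020}.
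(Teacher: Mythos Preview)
Your proposal is correct and follows exactly the same two-step strategy as the paper's proof: first invoke \cite[Lemma~3.24]{LD2011} to pass from $\mod(k[G])$-loops (resp.\ $\mod(k[G])$-$2$-cycles) in $\add(K(\bmgamma))\subseteq\ch^{G}$ to $G$-loops (resp.\ $G$-$2$-cycles) in $\add(\bmgamma)\subseteq\ch$, and then identify the latter with loops (resp.\ $2$-cycles) in $Q_G$ via its construction. The paper disposes of the second step in a single clause (``by the construction of $Q_G$''), whereas you spell out the intermediate identification of irreducible maps in $\add(\bmgamma)$ with arrows of $Q$ and then sketch how the modules $M(i,j;\tau)$ detect arrows within an orbit; this added detail is welcome and does not deviate from the paper's argument.
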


\begin{proof}
By the Lemma above, $\add(K(\bmgamma))$	has no $\mod(k[G])$-loops (resp. $\mod(k[G])$-2-cycles) at each indecomposable object if and only if $\add(\bmgamma)$ has no $G$-loops (resp. $G$-2-cycles). By the construction of $Q_{G}$, this is equivalent to $Q_{G}$ having no loops (resp. 2-cycles).
	
\end{proof}

\subsection
{Exchange matrices and cluster characters}

In this subsection, we assume that the characteristic of the field $k$ is zero and $Q$ has no loops or 2-cycles. 

\begin{Def}\rm\cite{Dupont2009}
	We say that the $G$-action on $Q$ is \emph{admissible} if $Q$ has no $G$-loops or $G$-2-cycles. The pair $(Q,G)$ is then called
	an \emph{admissible pair}.
\end{Def}

We assume that the $G$-action on $Q$ is admissible and the $G$-invariant potential $W$ is non-degenerate~\cite[Definition 7.2]{DWZ2008}. The Gabriel quiver of $\bmgamma$ in $\ch(\bmgamma)$ is isomorphic to $Q$. Suppose that the number of vertices of $Q$ is $n$. One denotes by $\bmgamma_{1}, \bmgamma_{2},\ldots, \bmgamma_{n}$ the indecomposable objects of $\add(\bmgamma)$ up to isomorphism, the $\bmgamma_{i}$ for $i\in[r+1,n]$ being the projective-injective objects. There is a canonical bijection between the sets $Q_{0}$ and $\{\bmgamma_{1},\cdots,\bmgamma_{n}\}$. The action of $G$ on $\bmgamma$ induces an action on $I=[1,n]$. Denote by $I_{uf}=\{1,\cdots,r\}$ and by $I_{f}=\{r+1,\cdots,n\}$.

If $i\in I$, we denotes by $\bm{i}=G\cdot i$ the corresponding orbit set. Define $\overline{I}=\{\bm{1},\cdots\bm{m}\}$ to be the set of these orbit sets. Denote by $\overline{I}_{uf}=\{\bm{1},\cdots,\bm{s}\}$ the set of unfrozen equivalence classes and by $\overline{I}_{f}=\{\bm{s+1},\cdots,\bm{m}\}$ the set of frozen equivalence classes.
%
%

Let $\bm{i}\in\overline{I}_{uf}$. Assume that $\bm{i}=\{i_{1},\cdots,i_{s}\}$. Since $Q$ has no $G$-loops, we have $\mu_{i_{u}}\circ\mu_{i_{v}}=\mu_{i_{v}}\circ\mu_{i_{u}}$ for each $i_{u},i_{v}$ in $\bm{i}$. We define the \emph{orbit mutation} $\mu_{\bm{i}}(Q)$ of $Q$ at $\bm{i}$ as $\mu_{i_{s}}\circ\mu_{i_{s-1}}\circ\cdots\circ\mu_{i_{1}}(Q)$.

\begin{Def}\rm\cite[Definition 2.19]{Dupont2009}
	We say that the $G$-action on $Q$ is \emph{stable} if for any finite sequence of unfrozen $G$-orbits $(\bm{k_{1}},\ldots,\bm{k_{r}})$, each of the pairs $(\mu_{\bm{k_{1}}}(Q), G), (\mu_{{\bm{k_{2}}}} \circ \mu_{\bm{k_{1}}}(Q), G), \dots, (\mu_{\bm{k_{n}}}\circ \cdots \circ \mu_{\bm{k_{1}}}(Q), G)$ is admissible.
\end{Def}

\begin{Rem}
	If the relative Ginzburg dg algebra $\bmgamma(Q, F, W)$ is concentrated in degree zero, then the Higgs category $\ch$ is a usual Frobenius exact category, as shown in \cite[Theorem 4.18]{KellerWu2023}. Then we can use a similar proof to that of \cite[Theorem 3.33]{LD2011} to show that the admissible $G$-action on $Q$ is stable.
\end{Rem}

Now we assume that $(Q,G)$ is an admissible pair.

\begin{Def}\cite[Definition 3.36]{LD2011}\rm\label{Def:skew-symmetrizable}
	Let $\bm{i}\in\overline{I}$ and $\bm{j}\in\overline{I}_{uf}$. We define
	$$b_{\bm{i},\bm{j}}=\frac{\sharp\{a\in Q_{1}\,|\,s(a)\in\bm{i},t(a)\in\bm{j}\}-\sharp\{a\in Q_{1}\,|\,s(a)\in\bm{j},t(a)\in\bm{i}\}}{\sharp(\bm{j})}.$$ Denote by $B(\bmgamma)=(b_{\bm{i},\bm{j}})_{i\in\overline{I},j\in\overline{I}_{uf}}$ the matrix having these entries. It is called the \emph{exchange matrix of $\bmgamma$ with respect to the group action $G$}.
\end{Def}

\begin{Rem}
	\begin{itemize}
		\item As $Q$ has no $G$-2-cycles, we have $\sharp\{a\in Q_{1}\,|\,s(a)\in\bm{i},t(a)\in\bm{j}\}=0$ or $\sharp\{a\in Q_{1}\,|\,s(a)\in\bm{j},t(a)\in\bm{i}\}=0$.
		\item As $Q$ has no $G$-loops, we have $b_{\bm{i},\bm{i}}=0$ for each $\bm{i}\in\overline{I}$.
		\item It is easy to see that $$b_{\bm{i},\bm{j}}=\sharp\{a\in Q_{1}\,|\,s(a)\in\bm{i},t(a)=j\}-\sharp\{a\in Q_{1}\,|\,s(a)=j,t(a)\in\bm{i}\} $$ for each $\bm{i},\bm{j}\in\overline{I}$. Hence $B(\bmgamma)$ has integer coefficients.
		\item The exchange matrix is clearly skew-symmetrizable with with symmetrizer $D=(d_{\bm{i}})_{\bm{i}\in\overline{I}_{uf}}$, where $d_{\bm{i}}=\sharp(\mathrm{stab}(i))$. Here $\mathrm{stab}(i)$ denotes the stabilizer of $i$ for the $G$-action.
	\end{itemize}
\end{Rem}



 Denote by $\tilde{B}(\bmgamma)=(\tilde{b}_{i,j})_{i\in I,j\in I_{uf}}$ the usual exchange matrix of $Q$, i.e. $$\tilde{b}_{i,j}=\sharp\{a\in Q_{1}\,|\,s(a)=i,t(a)=j\}-\sharp\{a\in Q_{1}\,|\,s(a)=j,t(a)=i\}.$$

\begin{Lem}\label{Lem:two matrix relation}
	Let $\bm{i},\bm{j}\in\overline{I}$. For each $j\in\bm{j}$, we have 
	$b_{\bm{i},\bm{j}}=\sum_{k\in\bm{i}}\tilde{b}_{k,j}$.
\end{Lem}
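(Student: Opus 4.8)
The plan is to reduce the identity to the representative-independent description of $b_{\bm{i},\bm{j}}$ recorded in the Remark following Definition \ref{Def:skew-symmetrizable}, namely that for every $j\in\bm{j}$ one has
\[ b_{\bm{i},\bm{j}}=\sharp\{a\in Q_{1}\mid s(a)\in\bm{i},\ t(a)=j\}-\sharp\{a\in Q_{1}\mid s(a)=j,\ t(a)\in\bm{i}\}. \]
Once this formula is available, the lemma becomes a short bookkeeping argument, so the proof will be quick.

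First I would fix $j\in\bm{j}$ and use that the orbit $\bm{i}\subseteq Q_{0}$ is the disjoint union of the singletons $\{k\}$, $k\in\bm{i}$. Consequently the set of arrows $a$ with $s(a)\in\bm{i}$ and $t(a)=j$ is the disjoint union, over $k\in\bm{i}$, of the arrows with $s(a)=k$ and $t(a)=j$; counting gives
\[ \sharp\{a\in Q_{1}\mid s(a)\in\bm{i},\ t(a)=j\}=\sum_{k\in\bm{i}}\sharp\{a\in Q_{1}\mid s(a)=k,\ t(a)=j\}, \]
and the symmetric decomposition gives the analogous identity for the arrows from $j$ into $\bm{i}$. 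Subtracting the two identities, the right-hand side becomes $\sum_{k\in\bm{i}}\tilde{b}_{k,j}$ by the very definition of $\tilde{b}_{k,j}$, while the left-hand side equals $b_{\bm{i},\bm{j}}$ by the displayed formula above; this is precisely the claimed equality.

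The only delicate point — and the closest thing to an obstacle — is the representative-independence used in the Remark's formula. Here one argues that $G$ acts by automorphisms of $Q$ permuting the arrows, so an automorphism carrying $j$ to another element $j'\in\bm{j}$ preserves $\bm{i}$ setwise and hence induces a bijection between $\{a\mid s(a)\in\bm{i},\ t(a)=j\}$ and $\{a\mid s(a)\in\bm{i},\ t(a)=j'\}$, and likewise for the reverse counts; summing over the $\sharp(\bm{j})$ elements of $\bm{j}$ then cancels the denominator in Definition \ref{Def:skew-symmetrizable} and yields the stated formula. If one prefers not to quote the Remark, this computation can simply be inserted here, after which the preceding paragraph completes the proof.
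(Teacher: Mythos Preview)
Your argument is correct and is essentially the paper's approach: the paper's proof reads ``It is clear from the definition,'' and you have simply spelled out that unpacking, invoking exactly the representative-free formula from the third bullet of the Remark after Definition~\ref{Def:skew-symmetrizable} and then partitioning the arrow counts over $k\in\bm{i}$.
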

\begin{proof}
	It is clear from the definition.
\end{proof}

\begin{Ex}\label{Ex:type A3}
Let $(Q,F)$ be the following ice quiver
\[
\begin{tikzcd}
	1\arrow[r]&2&3\arrow[l]\\
	\color{blue}\boxed{4}\arrow[u]&\color{blue}\boxed{5}\arrow[u]&\color{blue}\boxed{6}\arrow[u]
\end{tikzcd}
\]
on which $G=\mathbb{Z}/2\mathbb{Z} $ acts in the only non-trivial possible way. Then $\overline{I}=Q_{0}/G=\{\bm{1},\bm{2},\textcolor{blue}{\bm{4}},\textcolor{blue}{\bm{5}}\} $ and we obtain matrix $B(\bmgamma)$
\[
\begin{bmatrix}
	0&2\\
	-1&0\\
	1&0\\
	0&1
\end{bmatrix}
\]
which is the initial exchange matrix of cluster algebra type $B_{2}$ (or $C_{2}$) with principal coefficients. The corresponding valued ice quiver is 
\[
\begin{tikzcd}
	\bm{1}\arrow[r,"{(2,1)}"]&\bm{2}\\
	\color{blue}\boxed{\bm{4}}\arrow[u]&\color{blue}\boxed{\bm{5}}\arrow[u]\color{black}.
\end{tikzcd}
\]

\end{Ex}

\begin{Ex}\label{Ex:typeA5}
Let $(Q,F)$ be the following ice quiver
\[
\begin{tikzcd}
	&&\color{blue}\boxed{7}\arrow[dr]&\\
	&\color{blue}\boxed{2}\arrow[ur,blue]\arrow[dr]&&5\arrow[ll]\arrow[dr]&\\
	\color{blue}\boxed{1}\arrow[ur,blue]\arrow[dr,blue]&&4\arrow[dr,swap]\arrow[ur]\arrow[ll]&&9\arrow[ll]\\
	&\color{blue}\boxed{3}\arrow[dr,blue]\arrow[ur]&&6\arrow[ll]\arrow[ur]&\\
	&&\color{blue}\boxed{8}\arrow[ur]&
\end{tikzcd}
\]
with a $ \mathbb{Z}/2\mathbb{Z}$-action which is the reflection along middle horizontal line. Then $\overline{I}=\{\bm{4},\bm{5},\bm{9},\textcolor{blue}{\bm{1}},\textcolor{blue}{\bm{2}},\textcolor{blue}{\bm{7}}\}$ and the matrix $B(\bmgamma)$ is
\[
\begin{bmatrix}
	0&2&-1\\
	-1&0&1\\
	1&-2&0\\
	-1&0&0\\
	1&-1&0\\
	0&1&0
\end{bmatrix}
\]	
which is the initial exchange matrix of cluster algebra structure on the maximal unipotent subgroup of a Lie group of type $B_{3}$ \cite[Example 4.22]{LD2011}.
\end{Ex}

Let $v$ be an unfrozen vertex of $Q$. Define $\mu^{+}(\bmgamma)=\bigoplus_{i\neq v}\bmgamma_{i}\oplus\bmgamma'_{v}$, where $\bmgamma'_{v}$ is given by the cone of 
$$\bmgamma_{v}\ra\bigoplus_{\alpha\in Q_{1},s(\alpha)=v}\bmgamma_{t(\alpha)}$$
whose components are given by left multiplication by $\alpha$.

Similarly, define $\mu^{-}(\bmgamma)=\bigoplus_{i\neq v}\bmgamma_{i}\oplus\bmgamma''_{v}$, where $\bmgamma''_{v}$ is given by the cocone of 
$$ \bigoplus_{\beta\in Q_{1},t(\beta)=v}\bmgamma_{s(\beta)}\ra\bmgamma_{v}$$
whose components are given by left multiplication by $\beta$. By \cite[Remark 5.5]{Wu20231}, $\mu_{v}^{+}(\bmgamma)$ and $\mu_{v}^{-}(\bmgamma)$ are isomorphic in the Higgs category $\ch$. Then the mutation $\mu_{v}(\bmgamma)$ of $\bmgamma$ at $v$ is denoted as $\mu_{v}^{+}(\bmgamma)$ or $\mu_{v}^{-}(\bmgamma)$. Denote by $\mu^{FZ}_{v}$ the Fomin--Zelevinsky's mutation of quiver or matrix. Then the Gabriel quiver of $\End_{\ch}(\mu_{i}\bmgamma)$ is isomorphic to $\mu^{FZ}_{v}(Q)$ since $W$ is non-degenerate.

For each $\bm{i}\in\overline{I}_{uf}$, assume that $\bm{i}=\{i_{1},\cdots,i_{s}\}$, we define the \emph{orbit mutation} $\mu_{\bm{i}}(\bmgamma)$ of $\bmgamma$ at $\bm{i}$ as $\mu_{i_{s}}\circ\mu_{i_{s-1}}\circ\cdots\circ\mu_{i_{1}}(\bmgamma)$. For each $i_{u},i_{v}$ in $\bm{i}$, since $Q$ has no $G$-loops, we have $\mu_{i_{u}}\circ\mu_{i_{v}}=\mu_{i_{v}}\circ\mu_{i_{u}}$. Hence the object $\mu_{\bm{i}}(\bmgamma)$ is well defined and $G$ also naturally acts on it.

\begin{Rem}
	The orbit mutation of $\bmgamma$ in $\ch$  corresponds to the mutation of $\mod k[G]$-stable cluster tilting subcategory $\add(\bmgamma[G])$ in $\ch^{G}$ (\cite[Theorem 3.35, Proposition 3.40]{LD2011}).
\end{Rem}

\begin{Prop}\cite[Theorem 3.42]{LD2011}
	Let $\bm{i}\in\overline{I}_{uf}$ and assume that $\bm{i}=\{i_{1},\cdots,i_{s}\}$. Then $$B(\mu_{\bm{i}}(\bmgamma))=\mu^{FZ}_{\bm{i}}(B(\bmgamma)).$$
\end{Prop}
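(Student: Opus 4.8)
The statement to prove is that for an unfrozen $G$-orbit $\bm{i}=\{i_{1},\dots,i_{s}\}$ one has $B(\mu_{\bm{i}}(\bmgamma))=\mu^{FZ}_{\bm{i}}(B(\bmgamma))$, where the right-hand side is the composite Fomin--Zelevinsky mutation $\mu^{FZ}_{\bm{i}}=\mu^{FZ}_{i_{s}}\circ\cdots\circ\mu^{FZ}_{i_{1}}$ of the skew-symmetrizable matrix $B(\bmgamma)$. My plan is to reduce this to the corresponding statement for the ordinary exchange matrix $\tilde{B}(\bmgamma)$ of the quiver $Q$, using the explicit relation $b_{\bm{i},\bm{j}}=\sum_{k\in\bm{i}}\tilde{b}_{k,j}$ from Lemma~\ref{Lem:two matrix relation}, and to the already-established compatibility of single mutations with the Gabriel quiver. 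First I would record the two facts I will feed the calculation: (a) since $W$ is non-degenerate, for any unfrozen vertex $v$ the Gabriel quiver of $\End_{\ch}(\mu_{v}\bmgamma)$ is $\mu^{FZ}_{v}(Q)$, hence by induction along the sequence $i_{1},\dots,i_{s}$ the Gabriel quiver of $\End_{\ch}(\mu_{\bm{i}}\bmgamma)$ is $\mu^{FZ}_{i_{s}}\circ\cdots\circ\mu^{FZ}_{i_{1}}(Q)$; and (b) the $G$-action is preserved along orbit mutation (since $\mu_{i_{u}}$ and $\mu_{i_{v}}$ commute for $i_{u},i_{v}\in\bm{i}$, and $G$ permutes the $\mu_{i_{u}}$), so $\mu_{\bm{i}}(\bmgamma)$ is a $G$-stable object and $B(\mu_{\bm{i}}(\bmgamma))$ is defined by Definition~\ref{Def:skew-symmetrizable} applied to its Gabriel quiver $\mu^{FZ}_{\bm{i}}(Q)$.

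Given (a) and (b), the claim becomes the purely combinatorial identity
\[
B\bigl(\mu^{FZ}_{\bm{i}}(Q)\bigr)=\mu^{FZ}_{\bm{i}}\bigl(B(Q)\bigr),
\]
where on the left $B(-)$ denotes the orbit-contraction of the quiver's exchange matrix defined in Definition~\ref{Def:skew-symmetrizable}, and on the right $\mu^{FZ}_{\bm{i}}$ is matrix mutation. The key step is to prove that contracting by $G$-orbits intertwines vertex mutation of $Q$ with matrix mutation of $B(\bmgamma)$, one orbit at a time; then one iterates. So I would prove: if $(Q',G)$ is admissible and $\bm{i}=\{i_{1},\dots,i_{s}\}$ is a $G$-orbit of unfrozen vertices, then $B(\mu^{FZ}_{i_{s}}\circ\cdots\circ\mu^{FZ}_{i_{1}}(Q'))=\mu^{FZ}_{\bm{i}}(B(Q'))$. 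Using Lemma~\ref{Lem:two matrix relation}, write $b_{\bm{i},\bm{j}}=\sum_{k\in\bm{i}}\tilde{b}_{k,j}$ for a fixed $j\in\bm{j}$; since the vertices in $\bm{i}$ are pairwise non-adjacent (no $G$-loops, and $\mu_{i_{u}}$ commute) the successive FZ-mutations $\mu^{FZ}_{i_{1}},\dots,\mu^{FZ}_{i_{s}}$ act on disjoint neighbourhoods, so the effect of $\mu^{FZ}_{i_{s}}\circ\cdots\circ\mu^{FZ}_{i_{1}}$ on $\tilde b_{k,j}$ is, for each $k\in\bm{i}$, the single-vertex mutation formula at $k$. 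Summing the single-vertex formula over $k\in\bm{i}$ and comparing with the matrix-mutation formula for $\mu^{FZ}_{\bm{i}}(B(\bmgamma))$ — where the symmetrizer entry $d_{\bm i}=\sharp(\mathrm{stab}(i))=s$ (with $|\bm i|\cdot s=|G|$ constant) enters exactly so that the denominators $\sharp(\bm j)$ in Definition~\ref{Def:skew-symmetrizable} match up — gives the identity. The $G$-orbit structure makes the cross-terms $\tilde b_{k,\ell}$ for $k\in\bm i$, $\ell\in\bm j$ add up coherently because $G$ acts transitively on each orbit and $\tilde b$ is $G$-invariant; this is where one must be careful with the sign/half-plus-half conventions in the FZ formula, which is simplified here by admissibility (no $G$-2-cycles implies the $[\ ]_{+}$ brackets behave linearly on the relevant sums).

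**Main obstacle.** The routine part is the verification that disjoint-support single mutations commute and compose to the right thing at quiver level; the genuinely delicate point is the bookkeeping in the combinatorial identity $B(\mu^{FZ}_{\bm{i}}(Q))=\mu^{FZ}_{\bm{i}}(B(\bmgamma))$: one must confirm that summing the quadratic term $\tfrac12(|b_{ik}|b_{kj}+b_{ik}|b_{kj}|)$ of the FZ formula over a full $G$-orbit $\bm i$, after dividing by orbit sizes, reproduces the quadratic term of the skew-symmetrizable mutation $\mu^{FZ}_{\bm i}$, and that the sign conventions and the symmetrizer $D$ are consistent. I would handle this by citing the corresponding computation in Demonet \cite[Theorem~3.42]{LD2011} verbatim — the only new ingredient here compared to loc.\ cit.\ is the presence of frozen vertices, which contribute to $B(\bmgamma)$ only through its non-mutable rows $\bm i\in\overline I_{f}$ and do not interact with the orbit mutation formula since we never mutate at frozen orbits — so the reduction to Demonet's acyclic/Frobenius-exact case is essentially formal once (a) and (b) are in place.
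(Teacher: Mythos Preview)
Your plan is essentially the paper's own approach: reduce to the underlying skew-symmetric matrix $\tilde{B}(\bmgamma)$ via the orbit-sum relation $b_{\bm{k},\bm{l}}=\sum_{a\in\bm{k}}\tilde b_{a,l}$ of Lemma~\ref{Lem:two matrix relation}, compute the composite mutation $\mu_{i_s}\circ\cdots\circ\mu_{i_1}$ of $\tilde B$ explicitly (the paper writes out the induction you allude to, obtaining $\tilde b^{s}_{k,l}=\tilde b_{k,l}+\tfrac12\sum_{r}(|\tilde b_{k,i_r}|\tilde b_{i_r,l}+\tilde b_{k,i_r}|\tilde b_{i_r,l}|)$ for $k,l\notin\bm i$), and then compare the orbit-summed result with the direct FZ mutation of $B(\bmgamma)$ at $\bm i$. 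Your observation that frozen rows are carried along passively is correct and is the only point not already in Demonet.

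Two small slips worth fixing. First, the vertices of $\bm i$ are pairwise non-adjacent but their neighbourhoods are \emph{not} disjoint in general; what makes the composite formula additive in the quadratic term is precisely the absence of arrows \emph{within} $\bm i$ (so mutating at $i_u$ does not alter $\tilde b_{i_v,-}$ for $v\neq u$), not disjointness of stars. Second, with the convention $\bm i=\{i_1,\dots,i_s\}$ one has $\sharp(\bm i)=s$ and hence $d_{\bm i}=\sharp(\mathrm{stab}(i))=|G|/s$, not $s$; in fact the symmetrizer plays no role in the actual comparison, which is carried out entirely with the integer formula $b_{\bm k,\bm l}=\sum_{a\in\bm k}\tilde b_{a,l}$ and never requires dividing by orbit sizes.
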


\begin{proof}
Let $\tilde{B}=(\tilde{b}_{k,l})$ be the associated skew-symmetric matrix of $Q$. Then the matrix $B(\bmgamma)=(b_{\bm{k},\bm{l}})$ has coefficients
$$b_{\bm{k},\bm{l}}=\sum_{a\in\bm{k}}\tilde{b}_{a,l}.$$

The associated skew-symmetric matrix $\tilde{B}(\mu_{\bm{i}}(\bmgamma))=(\tilde{b}^{s}_{k,l})$ is equal to $\mu_{\bm{i}}(\tilde{B})=\mu_{i_{s}}\circ\mu_{i_{s-1}}\circ\cdots\circ\mu_{i_{1}}(\tilde{B}).$

By induction, one can show that 
\begin{equation*}
	\tilde{b}^{s}_{k,l}=
	\begin{cases}
		-\tilde{b}_{k,	l}&\mbox{if $k$ or $l$$\in\bm{i}$,}\\
		\tilde{b}_{k,l}+\frac{1}{2}\sum_{r=1}^{s}(|\tilde{b}_{k,i_{r}}|\tilde{b}_{i_{r},l}+\tilde{b}_{k,i_{r}}|\tilde{b}_{i_{r},l}|)&\mbox{otherwise.}
	\end{cases}
\end{equation*}

Suppose that $B(\mu_{i}(\bmgamma))=(b^{s}_{\bm{k},\bm{l}})$. By Lemma \ref{Lem:two matrix relation}, we have 
$$b^{s}_{\bm{k},\bm{l}}=\sum_{r\in\bm{k}}\tilde{b}^{s}_{r,l}.$$
If $\bm{k}=\bm{i}$ or $\bm{l}=\bm{i}$, we obtain
$b^{s}_{\bm{k},\bm{l}}=-\sum_{r\in\bm{k}}\tilde{b}_{r,l}.$

Otherwise, if $\bm{k},\bm{l}\neq\bm{i}$ then
\begin{equation*}
	\begin{split}
		b^{s}_{\bm{k},\bm{l}}=&\sum_{s\in\bm{k}}(\tilde{b}_{s,l}+\frac{1}{2}\sum_{r=1}^{s}(|\tilde{b}_{s,i_{r}}|\tilde{b}_{i_{r},l}+\tilde{b}_{s,i_{r}}|\tilde{b}_{i_{r},l}|))\\
		=&\sum_{s\in\bm{k}}(\tilde{b}_{s,l}+\frac{1}{2}\sum_{t\in\bm{i}}(|\tilde{b}_{s,t}|\tilde{b}_{t,l}+\tilde{b}_{s,t}|\tilde{b}_{t,l}|)).
	\end{split}
\end{equation*}

On the other hand, the coefficients $c_{\bm{k},\bm{l}}$ of $\mu^{FZ}_{\bm{i}}(B(\bmgamma))=(c_{\bm{k},\bm{l}})$ are
\begin{equation*}
	c_{\bm{k},\bm{l}}=
	\begin{cases}
		-b_{\bm{k},\bm{l}}&\mbox{if $\bm{k}=\bm{i}$ or $\bm{l}=\bm{i}$,}\\
		b_{\bm{k},\bm{l}}+\frac{1}{2}(|b_{\bm{k},\bm{i}}|b_{\bm{i},\bm{l}}+b_{\bm{k},\bm{i}}|b_{\bm{i},\bm{l}}|)&\mbox{otherwise.}
	\end{cases}
\end{equation*}

Thus, if $\bm{k}=\bm{i}$ or $\bm{l}=\bm{i}$, we have $c_{\bm{k},\bm{l}}=-b_{\bm{k},\bm{l}}=-\sum_{a\in\bm{k}}\tilde{b}_{a,l}=b^{s}_{\bm{k},\bm{l}}$. If $\bm{k},\bm{l}\neq\bm{i}$, we can check that
\begin{equation*}
	\begin{split}
		c_{\bm{k},\bm{l}}&=b_{\bm{k},\bm{l}}+\frac{1}{2}(|b_{\bm{k},\bm{i}}|b_{\bm{i},\bm{l}}+b_{\bm{k},\bm{i}}|b_{\bm{i},\bm{l}}|)\\
		&=\sum_{m\in\bm{k}}(\tilde{b}_{m,l}+\frac{1}{2}\sum_{n\in\bm{i}}(|\tilde{b}_{m,i}|\tilde{b}_{n,l}+\tilde{b}_{m,i}|\tilde{b}_{n,l}|)).
	\end{split}
\end{equation*} 

Then it is not hard to see that $c_{\bm{k},\bm{l}}$ is equal to $b^{s}_{\bm{k},\bm{l}}$.
\end{proof}

\bigskip

For each $ X $ in $ \pr_{\cc(Q,F,W)}\bmgamma $, define the \emph{index with respect to $ \bmgamma $} $  $ as the element of $ K_{0}(\add\relGammabf) $ given by
$$ \ind_{\bmgamma}X=[T_{0}^{X}]-[T_{1}^{X}] ,$$
where $ T_{1}^{X}\ra T_{0}^{X}\ra X\ra\Si T_{1}^{X} $ is an $ (\add\bmgamma) $-presentation of $ X $. It does not depend on the choice of a presentation \cite{paluClusterCharacters2Calabi2008}.

For a dimension vector $ e\in\mathbb{N}^{Q_{0}} $, we denote by $ l(e) $ the sum $ \ind_{\bmgamma}X+\ind_{\bmgamma}\Si X $, where $ \underline{\dim}\Ext^{1}_{\cc}(\bmgamma,X)=e $. By \cite[Lemma 5.2]{KellerWu2023}, this does not depend on the choice of such $ X $. The $G$-action on $(Q,F,W)$ induces a $G$-action on $\mathbb{N}^{Q_{0}}$. 

Let $\pi$ be the following canonical projection
$$\pi\colon\mathbb{Q}[x_{i}^{\pm}]_{i\in I}\ra\mathbb{Q}[x_{\bm{i}}^{\pm}]_{\bm{i}\in\overline{I}}$$
$$\qquad\,\, x_{i}\mapsto x_{\bm i}.$$

\begin{Lem}\label{Lem:G-index}
	\begin{itemize}
		\item For each $M\in\pr_{\cc(Q,F,W)}\bmgamma$, the polynomial $$\pi(x^{\ind_{\bmgamma}(M)})=\pi(\prod_{i=1}^{n}x_{i}^{[\ind_{\bmgamma}(M):\bmgamma_{i}]})$$ only depends on the class of $M$ modulo $G$.
		\item For each $ e\in\mathbb{N}^{Q_{0}} $, the polynomial $\pi(x^{-l(e)})$ only depends on the class of $e$ modulo $G$. 
	\end{itemize}
\end{Lem}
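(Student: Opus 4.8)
The plan is to reduce both assertions to a single bookkeeping observation. The $G$-action on $\cc(Q,F,W)$ restricts to $\add(\bmgamma)$ (which is $G$-stable, since the set of vertex idempotents $\{e_{i}\}$ is permuted by $G$ and this lifts to $\bmgamma$), and therefore induces a $G$-action on $K_{0}(\add\bmgamma)$ permuting the basis $\{[\bmgamma_{i}]\}_{i\in I}$ according to the action of $G$ on $I=Q_{0}$; I will write $^{g}[\bmgamma_{i}]=[\bmgamma_{g\cdot i}]$. The first step is to record the compatibility $\ind_{\bmgamma}({}^{g}M)={}^{g}\ind_{\bmgamma}(M)$ in $K_{0}(\add\bmgamma)$ for every $M\in\pr_{\cc}\bmgamma$ and every $g\in G$: since $^{g}(-)$ is a (strict) triangle autoequivalence of $\cc$, it carries an $(\add\bmgamma)$-presentation $T_{1}^{M}\to T_{0}^{M}\to M\to\Si T_{1}^{M}$ to an $(\add\bmgamma)$-presentation $^{g}T_{1}^{M}\to {}^{g}T_{0}^{M}\to {}^{g}M\to\Si\,{}^{g}T_{1}^{M}$ of $^{g}M$ (in particular $^{g}M\in\pr_{\cc}\bmgamma$); taking classes gives $\ind_{\bmgamma}({}^{g}M)=[{}^{g}T_{0}^{M}]-[{}^{g}T_{1}^{M}]={}^{g}\ind_{\bmgamma}(M)$.

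Next I would isolate the elementary fact about $\pi$. For any $\lambda=\sum_{i\in I}\lambda_{i}[\bmgamma_{i}]\in K_{0}(\add\bmgamma)$ and any orbit $\bm{j}\in\overline{I}$, the exponent of $x_{\bm{j}}$ in $\pi(x^{\lambda})$ is $\sum_{i\in\bm{j}}\lambda_{i}$; since $g$ (equivalently $g^{-1}$) permutes the finite set $\bm{j}$, this sum is unchanged when $\lambda$ is replaced by $^{g}\lambda$, whence $\pi(x^{{}^{g}\lambda})=\pi(x^{\lambda})$ for all $g\in G$. Combining this with the previous paragraph, for any $g\in G$ one gets $\pi(x^{\ind_{\bmgamma}({}^{g}M)})=\pi(x^{{}^{g}\ind_{\bmgamma}(M)})=\pi(x^{\ind_{\bmgamma}(M)})$. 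Since two objects in the same class modulo $G$ are, up to isomorphism, of the form $M$ and $^{g}M$, this is the first assertion.

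For the second assertion I would argue in the same way, the only extra point being to transport the defining data of $l(e)$ along $^{g}(-)$. Given $e\in\mathbb{N}^{Q_{0}}$, choose $X$ as in the definition of $l(e)$, i.e. with $\underline{\dim}\Ext^{1}_{\cc}(\bmgamma,X)=e$ (and $X,\Si X\in\pr_{\cc}\bmgamma$). Because $^{g^{-1}}(-)$ is an autoequivalence sending $\bmgamma_{i}$ to $\bmgamma_{g^{-1}\cdot i}$, the $i$-th coordinate of $\underline{\dim}\Ext^{1}_{\cc}(\bmgamma,{}^{g}X)$ is $\dim\Ext^{1}_{\cc}(\bmgamma_{i},{}^{g}X)=\dim\Ext^{1}_{\cc}(\bmgamma_{g^{-1}\cdot i},X)=e_{g^{-1}\cdot i}$, so that $\underline{\dim}\Ext^{1}_{\cc}(\bmgamma,{}^{g}X)=g\cdot e$ for the induced $G$-action on $\mathbb{N}^{Q_{0}}$, and $^{g}X$ is a legitimate representative for $g\cdot e$. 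Using the independence of $l(-)$ from the chosen representative (Lemma 5.2 of \cite{KellerWu2023}), the fact that $^{g}(-)$ commutes with $\Si$, and the first paragraph, I obtain $l(g\cdot e)=\ind_{\bmgamma}({}^{g}X)+\ind_{\bmgamma}(\Si\,{}^{g}X)={}^{g}\ind_{\bmgamma}(X)+{}^{g}\ind_{\bmgamma}(\Si X)={}^{g}l(e)$. Applying the elementary fact about $\pi$ to $\lambda=-l(e)$ gives $\pi(x^{-l(g\cdot e)})=\pi(x^{-{}^{g}l(e)})=\pi(x^{-l(e)})$, as required.

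I do not expect a serious obstacle here; the content is entirely formal. The one place that needs genuine care is the matching of conventions: one must check that the permutation of the indecomposable summands $\bmgamma_{i}$ induced by $^{g}(-)$ is exactly the action of $G$ on $Q_{0}$ fixed earlier (this is built into the identification of the Gabriel quiver of $\End_{\ch}(\bmgamma)$ with $Q$ together with its $G$-action), and likewise that the $G$-action on $\mathbb{N}^{Q_{0}}$ appearing in the statement is the one induced from $\Ext^{1}_{\cc}(\bmgamma,-)$. Once these identifications are pinned down, the orbit-summing step makes both statements immediate.
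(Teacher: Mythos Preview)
Your argument is correct and follows essentially the same route as the paper's own proof: both establish that $\ind_{\bmgamma}({}^{g}M)$ and $\underline{\dim}\Ext^{1}_{\cc}(\bmgamma,{}^{g}X)$ are $G$-translates of $\ind_{\bmgamma}(M)$ and $e$ respectively, and then observe that $\pi$ collapses each $G$-orbit of exponents to a single monomial. Your write-up is more detailed (you spell out the orbit-summing step and the equivariance of $l$, which the paper leaves implicit), and the apparent $g$ versus $g^{-1}$ discrepancy with the paper's formula $\underline{\dim}\Ext^{1}_{\cc}(\bmgamma,{}^{g}X)={}^{g^{-1}}e$ is purely a convention issue that does not affect the conclusion, as you yourself anticipate in your final paragraph.
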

\begin{proof}
	Since $\bmgamma$ is $G$-invariant, for every $g\in G$, we have
	$$[\ind_{\bmgamma}(^{g}\!M):\bmgamma_{i}]=[\ind_{\bmgamma}(M):\bmgamma_{^{g^{-1}}(i)}].$$
	Then it is clear that $\pi(x^{\ind_{\bmgamma}(M)})=\pi(x^{\ind_{\bmgamma}(^{g}\!M)})$ for each $g\in G$. This proves the first statement.
	
	For each $e\in\mathbb{N}^{Q_{0}}$, $g\in G$ and $X\in\ch$ such that $\underline{\dim}\Ext^{1}_{\cc}(\bmgamma,X)=e$, we have $$\Ext^{1}_{\cc}(\bmgamma,^{g}\!X)\simeq\Ext^{1}_{\cc}(^{g}\bmgamma,^{g}\!X)\simeq\,^{g^{-1}}\Ext^{1}_{\cc}(\bmgamma,X).$$
	Hence $\underline{\dim}(\Ext^{1}_{\cc}(\bmgamma,^{g}\!X))=\,^{g^{-1}}\!e$. Therefore $\pi(x^{-l(e)})$ only depends on the class of $e$ modulo $G$.
	
\end{proof}

Define the map (\cite{KellerWu2023})
\begin{align}\label{CC formula for Higgs}
	\xymatrix{
		CC_{-}:\obj(\ch)\ra\mathbb{Q}[x_{r+1},\ldots,x_{n}][x^{\pm1}_{1},x_{2}^{\pm1},\ldots,x_{r}^{\pm1}]
	}
\end{align}
as follows:
for any object $ M $ of $ \ch $, we put
$$ CC_{M}=x^{\ind_{\bmgamma}M}\sum_{e}\chi(\Gr_{e}(\Ext^{1}_{\cc}(\bmgamma,M)))x^{-l(e)}, $$
where the sum ranges over all the elements of the Grothendieck group; for a $ J_{rel} $-module $ L $, the notation $ \Gr_{e}(L) $ denotes the projective variety of submodules of $ L $ whose class in the Grothendieck group is $ e $; for an algebraic variety $ V $ over $ \mathbb{C} $, the notation $ \chi(V) $ denotes the Euler characteristic.

\begin{Thm}\label{Thm: relative cc map}\cite[Theorem 5.4]{KellerWu2023}
	The map $ CC_{-} $ defined above is a cluster character on $ \ch $ with respect to $ \bmgamma $.
\end{Thm}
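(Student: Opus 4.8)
The statement to prove is that the map $CC_{?}$ of \eqref{CC formula for Higgs} is a cluster character on the Higgs category $\ch$ with respect to $\bmgamma$ (Theorem \ref{Thm: relative cc map}). Recall that a cluster character on $\ch$ with respect to $\bmgamma$ requires three things: (i) $CC_{\bmgamma_i}=x_i$ for all $i$ and, more generally, $CC_{P}=x^{\ind_{\bmgamma}P}$ for $P\in\cp=\add(e\bmgamma)$; (ii) $CC_{M\oplus N}=CC_M\cdot CC_N$; and (iii) the multiplication formula: for each exchange pair, i.e. objects $X,Y$ with $\dim\Ext^1_{\ch}(X,Y)=1$ fitting into non-split conflations $X\to E\to Y$ and $Y\to E'\to X$ in $\ch$, one has $CC_X\cdot CC_Y=CC_E+CC_{E'}$.

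The first two conditions are essentially formal. For (i), when $M$ is projective-injective in $\ch$ we have $\Ext^1_{\cc}(\bmgamma,M)=0$ since $M\in\cp$ and the $\Ext$ vanishes by the defining property of $\cp$ in the Higgs category; hence the Grassmannian sum collapses to the single term $e=0$ with $\chi(\Gr_0)=1$, giving $CC_M=x^{\ind_{\bmgamma}M}$, and in particular $CC_{\bmgamma_i}=x_i$. For (ii), additivity of the index, the identity $\Ext^1_{\cc}(\bmgamma,M\oplus N)=\Ext^1_{\cc}(\bmgamma,M)\oplus\Ext^1_{\cc}(\bmgamma,N)$, and the fact that Euler characteristics of Grassmannians of submodules of a direct sum factor as a convolution product (this is the Caldero--Chapoton / Palu identity $\chi(\Gr_e(L\oplus L'))=\sum_{e'+e''=e}\chi(\Gr_{e'}(L))\chi(\Gr_{e''}(L'))$, valid because the Euler characteristic is multiplicative on the stratification by the submodule's image in $L$ and kernel in $L'$ — more precisely one uses that $\Gr$ of a direct sum is stratified by these data with affine-space fibers of the stratification controlled by $\Hom/\Ext$) yields the product formula directly.

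The heart of the proof is condition (iii), the multiplication/exchange formula, and this is where essentially all the work lies. Here I would follow the strategy of Palu \cite{paluClusterCharacters2Calabi2008} and its relative-context adaptation: the Higgs category $\ch$ is a Frobenius extriangulated category which is stably $2$-Calabi--Yau with cluster-tilting object $\bmgamma$ (Theorem \ref{Thm: Thm of KellerWu}), and $\End_{\ch}(\bmgamma)=J(Q,F,W)$ the relative Jacobian algebra; the functor $\Ext^1_{\cc}(\bmgamma,-)=\Hom_{\underline\ch}(\bmgamma,\Si-)$ realizes $\ch$ as a module category over $J_{rel}$. One then reduces the exchange identity, via the index-additivity in conflations and the explicit behavior of $\ind_{\bmgamma}$ under the two exchange triangles, to an identity purely about Euler characteristics of Grassmannians of $J_{rel}$-modules — namely that for $L=\Ext^1_{\cc}(\bmgamma,X)$ and $L'=\Ext^1_{\cc}(\bmgamma,Y)$ with the relevant one-dimensional extension space, the "difference" $\chi(\Gr_\bullet$ of the two middle terms$)$ matches the convolution $\chi(\Gr_\bullet(L))*\chi(\Gr_\bullet(L'))$. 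This combinatorial identity is proved by stratifying the relevant Grassmannians according to how submodules interact with the short exact sequences of $J_{rel}$-modules coming from applying $\Ext^1_{\cc}(\bmgamma,-)$ to the exchange triangles, and computing that the correction terms assemble exactly as in Fomin--Zelevinsky's exchange relation; the dichotomy $e\mapsto e$ versus $e\mapsto$(shifted) in the two triangles is controlled by whether a given submodule contains the socle element dual to the chosen extension.

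**Main obstacle.** The genuinely hard step is the multiplication formula (iii), and within it the constructive-set / Euler-characteristic bookkeeping: one must verify that the relative setting (frozen vertices, the fact that $\ch$ is only \emph{extriangulated} and not exact in general, and that $\bmgamma$ need not be concentrated in degree $0$) does not disrupt Palu's argument. The cited Theorem \ref{Thm: relative cc map} attributes this to \cite[Theorem 5.4]{KellerWu2023}, so in practice the proof here will consist of quoting that result; but a self-contained argument would need to (a) establish the $E$-triangle / conflation versions of the dimension-vector additivity and index formulas in the Frobenius extriangulated Higgs category, and (b) push through the stratification argument for $\chi(\Gr_e)$ with the $J_{rel}$-module structure, where the frozen part contributes the coefficient monomials $x_{r+1},\dots,x_n$ inertly. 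I expect (b)—the precise matching of Euler-characteristic strata on both sides of the exchange relation—to be the single most delicate point, exactly as it is in the non-relative case.
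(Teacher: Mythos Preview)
The paper gives no proof of this theorem at all: it is stated with the citation \cite[Theorem~5.4]{KellerWu2023} and used as a black box, exactly as you anticipated (``in practice the proof here will consist of quoting that result''). Your sketch of the Palu-style argument is an accurate description of how the cited result is established in the extriangulated setting, so there is nothing to compare against in this paper.

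One small imprecision in your part (i): you justify $\Ext^1_{\cc}(\bmgamma,M)=0$ only for $M\in\cp$ (projective-injective) and then write ``in particular $CC_{\bmgamma_i}=x_i$'', but the normalization must hold for \emph{all} $i$, including the unfrozen ones, where $\bmgamma_i\notin\cp$. The correct reason is simply that $\bmgamma$ is cluster-tilting, hence rigid, so $\Ext^1_{\cc}(\bmgamma,\bmgamma_i)=0$ for every indecomposable summand $\bmgamma_i$ of $\bmgamma$. This is cosmetic and does not affect the overall correctness of your outline.
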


\begin{Def}\rm\label{Def:P_X}
	For an object $M$ in $\ch$, we define the Laurent polynomial $P_{M}$ of $\mathbb{Q}[x_{\bm{i}}^{\pm}]_{\bm{i}\in\overline{I}}$ by
	$$P_{-}\colon\obj(\ch)\ra\mathbb{Q}[x_{\bm{i}}^{\pm}]_{\bm{i}\in\overline{I}}$$
	$$\qquad\qquad\qquad\, M\mapsto P_{M}=\pi(CC_{M}).$$
	
\end{Def}

\begin{Prop}
The map $P_{-}\colon\obj(\ch)\ra\mathbb{Q}[x_{r+1},\ldots,x_{n}][x^{\pm1}_{1},x_{2}^{\pm1},\ldots,x_{r}^{\pm1}]$ is $G$-equivariant, i.e. for each $M$ of $ \ch$, $P_{M}$ only depends on the class of $M$ modulo $G$.
\end{Prop}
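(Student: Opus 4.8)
The plan is to unwind $P_{{}^{g}M}=\pi(CC_{{}^{g}M})$ directly and to match it, term by term in the defining sum, with $P_{M}=\pi(CC_{M})$, invoking Lemma \ref{Lem:G-index} together with the $G$-equivariance of the $\Ext^{1}$-modules that was already extracted in its proof. Concretely, starting from
$$CC_{{}^{g}M}=x^{\ind_{\bmgamma}({}^{g}M)}\sum_{f}\chi\bigl(\Gr_{f}(\Ext^{1}_{\cc}(\bmgamma,{}^{g}M))\bigr)\,x^{-l(f)}$$
and applying the ring homomorphism $\pi$, one gets
$$P_{{}^{g}M}=\pi\bigl(x^{\ind_{\bmgamma}({}^{g}M)}\bigr)\sum_{f}\chi\bigl(\Gr_{f}(\Ext^{1}_{\cc}(\bmgamma,{}^{g}M))\bigr)\,\pi\bigl(x^{-l(f)}\bigr).$$
By the first part of Lemma \ref{Lem:G-index}, $\pi(x^{\ind_{\bmgamma}({}^{g}M)})=\pi(x^{\ind_{\bmgamma}(M)})$, so the prefactor is already the one occurring in $P_{M}$ and only the sum needs to be handled.

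The heart of the argument is the behaviour of the quiver Grassmannian terms. As in the proof of Lemma \ref{Lem:G-index}, the $G$-invariance of $\bmgamma$ gives, for each $g\in G$, an isomorphism
$$\Ext^{1}_{\cc}(\bmgamma,{}^{g}M)\;\simeq\;\Ext^{1}_{\cc}({}^{g}\bmgamma,{}^{g}M)\;\simeq\;{}^{g^{-1}}\!\Ext^{1}_{\cc}(\bmgamma,M),$$
which is compatible with the $\End_{\ch}(\bmgamma)$-module structures, twisted by the algebra automorphism that $g$ induces on $\End_{\ch}(\bmgamma)\cong J_{rel}$; moreover this automorphism permutes the vertex idempotents of $J_{rel}$ exactly as $g$ permutes $Q_{0}$. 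Twisting a module by an algebra automorphism does not change its lattice of submodules, so this isomorphism identifies $\Gr_{f}(\Ext^{1}_{\cc}(\bmgamma,{}^{g}M))$ with $\Gr_{{}^{g}f}(\Ext^{1}_{\cc}(\bmgamma,M))$, where ${}^{g}(-)$ is the $G$-action on $\mathbb{N}^{Q_{0}}$; in particular these varieties are isomorphic and have equal Euler characteristic. (This is the same compatibility already used to deduce $\underline{\dim}\,\Ext^{1}_{\cc}(\bmgamma,{}^{g}X)={}^{g^{-1}}e$ in the proof of Lemma \ref{Lem:G-index}, now applied to submodules rather than to the whole module.)

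Substituting this, and using the second part of Lemma \ref{Lem:G-index} to replace $\pi(x^{-l(f)})$ by $\pi(x^{-l({}^{g}f)})$ — legitimate since $f$ and ${}^{g}f$ lie in the same $G$-orbit — the summand becomes $\chi(\Gr_{{}^{g}f}(\Ext^{1}_{\cc}(\bmgamma,M)))\,\pi(x^{-l({}^{g}f)})$. Reindexing the sum by $e={}^{g}f$, which is a bijection of $\mathbb{N}^{Q_{0}}$ and hence of the relevant part of the Grothendieck group, yields
$$P_{{}^{g}M}=\pi\bigl(x^{\ind_{\bmgamma}(M)}\bigr)\sum_{e}\chi\bigl(\Gr_{e}(\Ext^{1}_{\cc}(\bmgamma,M))\bigr)\,\pi\bigl(x^{-l(e)}\bigr)=\pi(CC_{M})=P_{M},$$
which is the claim.

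The only genuinely non-formal step is the middle one: verifying that the displayed chain of isomorphisms is not merely $k$-linear but respects the $J_{rel}$-module structures up to the automorphism induced by $g$, and that this automorphism acts on vertex idempotents the way $g$ acts on $Q_{0}$. Everything else is bookkeeping: once that compatibility is granted, the induced bijection on submodule lattices is automatically an isomorphism of quiver Grassmannians with dimension vectors transported by the $G$-action, the Euler characteristic is an isomorphism invariant, and the two parts of Lemma \ref{Lem:G-index} take care of the index prefactor and of the $x^{-l(e)}$ factors.
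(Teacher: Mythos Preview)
Your argument is correct and follows essentially the same route as the paper: both proofs combine Lemma~\ref{Lem:G-index} for the factors $\pi(x^{\ind_{\bmgamma}M})$ and $\pi(x^{-l(e)})$ with the isomorphism $\Gr_{f}(\Ext^{1}_{\cc}(\bmgamma,{}^{g}M))\simeq\Gr_{{}^{g}f}(\Ext^{1}_{\cc}(\bmgamma,M))$ of quiver Grassmannians. The paper simply cites this last isomorphism from \cite[Lemma 3.4]{LD2011}, whereas you spell out why it holds via the twist by the algebra automorphism of $J_{rel}$; otherwise the two arguments coincide.
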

\begin{proof}
It follows from Lemma~\ref{Lem:G-index} and the following isomorphism \cite[Lemma 3.4]{LD2011} $$\Gr_{e}(\Ext^{1}_{\cc}(\bmgamma,^{g}\!M))\simeq\Gr_{^{g}(e)}(\Ext^{1}_{\cc}(\bmgamma,M)).$$

\end{proof}

One will denote
$$P_{\overline{M}}=P_{M},$$
where $\overline{M}$ is the $G$-orbit of $M$.

\begin{Prop}
		\label{thm:3.50}
		\begin{itemize}
			\item[(i)] For $\bm i \in I, P_{\overline{\bmgamma_i}}= x_i$.
			\item[(ii)] If $X, Y \in \mathcal{H}, P_{\overline{X} \oplus\overline{Y}}= P_{\overline{X}} P_{\overline{Y}}$.
			\item[(iii)] If $X, Y \in \mathcal{H}$ and $\dim \mathrm{Ext}^1_{\mathcal{C}}(X, Y) = 1$, and if one fixes two non-split admissible short exact sequences
			\[
			0 \to X \to Z \to Y \to 0 \quad \text{and} \quad 0 \to Y \to Z' \to X \to 0
			\]
			then $P_{\overline{X}} P_{\overline{Y}} = P_{\overline{Z}} + P_{\overline{Z'}}$.
		\end{itemize}

\end{Prop}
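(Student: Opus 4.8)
The plan is to reduce all three statements to the fact, already established in Theorem~\ref{Thm: relative cc map}, that $CC_{?}$ is a cluster character on $\ch$ with respect to $\bmgamma$, using that $P_{?}=\pi\circ CC_{?}$ by Definition~\ref{Def:P_X}. First I would record that $\pi\colon\mathbb{Q}[x_i^{\pm1}]_{i\in I}\ra\mathbb{Q}[x_{\bm i}^{\pm1}]_{\bm i\in\overline I}$ is a well-defined homomorphism of $\mathbb{Q}$-algebras: it is the localisation of the surjection of polynomial rings sending $x_i$ to $x_{\bm i}$ (identifying the variables attached to vertices lying in one $G$-orbit) at the images of the $x_i$, hence is multiplicative. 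Combined with the preceding proposition, which gives that $P_M$ depends only on the orbit $\overline M$ of $M$, this lets me view $P_{?}$ as a function on $G$-orbits and push every defining identity of the cluster character $CC_{?}$ forward along $\pi$.

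For (i), one of the axioms of a cluster character gives $CC_{\bmgamma_i}=x_i$ for each $i\in I$, so $P_{\overline{\bmgamma_i}}=\pi(x_i)=x_{\bm i}$. For (ii), I would use multiplicativity of $CC_{?}$ on direct sums, $CC_{X\oplus Y}=CC_X\,CC_Y$ for $X,Y\in\ch$ (note $\ch$ is additive, being a Frobenius extriangulated category), together with the fact that $\pi$ is a ring homomorphism:
$$P_{\overline X\oplus\overline Y}=\pi(CC_{X\oplus Y})=\pi(CC_X)\,\pi(CC_Y)=P_{\overline X}\,P_{\overline Y},$$
where $\overline X\oplus\overline Y$ is understood as $\overline{X\oplus Y}$, which is well defined because $\,^{g}(X\oplus Y)=\,^{g}X\oplus\,^{g}Y$ for all $g\in G$.

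For (iii), I would first invoke Theorem~\ref{Thm: Thm of KellerWu} to identify the admissible short exact sequences of $\ch$ with the triangles of $\cc$ whose three terms all lie in $\ch$; then the hypothesis $\dim\Ext^{1}_{\cc}(X,Y)=1$ together with the two fixed non-split conflations $0\to X\to Z\to Y\to0$ and $0\to Y\to Z'\to X\to0$ is precisely the data to which the exchange relation in the definition of a cluster character applies, giving $CC_X\,CC_Y=CC_Z+CC_{Z'}$. Applying $\pi$ then yields
$$P_{\overline X}\,P_{\overline Y}=\pi(CC_X)\,\pi(CC_Y)=\pi(CC_Z)+\pi(CC_{Z'})=P_{\overline Z}+P_{\overline{Z'}}.$$

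Since the genuine content sits in the two inputs — $CC_{?}$ being a cluster character and $P_{?}$ being $G$-equivariant — I expect no real obstacle here; the present proposition is essentially formal. The only points that need a little care are checking that $\pi$ is a well-defined $\mathbb{Q}$-algebra homomorphism and fixing the conventions ``orbit of a sum $=$ sum of orbits'' in (ii) and ``admissible conflation in $\ch$ $=$ triangle of $\cc$ with all terms in $\ch$'' in (iii), so that the cluster-character axioms for $CC_{?}$ are literally applicable. I would state these conventions explicitly at the start of the proof.
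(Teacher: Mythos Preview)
Your proposal is correct and follows essentially the same approach as the paper: the paper's proof is the single sentence ``This follows from \cite[Theorem 5.4]{KellerWu2023} by applying the ring morphism $\pi$,'' i.e.\ exactly your reduction to the cluster-character axioms for $CC_{?}$ via the $\mathbb{Q}$-algebra homomorphism $\pi$. Your expanded discussion of the conventions (that $\pi$ is a ring map, that $\overline{X\oplus Y}$ is well defined, and that admissible conflations in $\ch$ are triangles in $\cc$) is a reasonable elaboration of that one line.
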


\begin{proof}
	This follows from \cite[Theorem 5.4]{KellerWu2023} by applying the ring morphism $\pi$.
\end{proof}

\bigskip

%
%

Denote by $\mathcal{A}( \mathcal{H},G)$ the subalgebra of $\mathbb{Q}(x_i)_{i \in \overline{I}}$ generated by the $P_{\overline{X}}$ where $\overline{X}$ goes over all $G$-orbits of objects of $\mathcal{H}$ such that $\bigoplus_{X \in \overline{X}} X$ is rigid. Denote by $\mathcal{A}( \mathcal{H})$ the subalgebra of $\mathbb{Q}(x_i)_{i \in I}$ generated by the $CC_{X}$ where $X$ goes over the rigid objects of $\mathcal{H}$.

\medskip

Denote by $\mathcal{A}_0(\mathcal{H},G)$ the subalgebra of $\mathbb{Q}(x_i)_{i \in \overline{I}}$ generated by the $P_{\overline{X}}$ where $\overline{X}$ goes over the $G$-orbits of objects of $\mathcal{H}$. Denote by $\mathcal{A}_0( \mathcal{H})$ the subalgebra of $\mathbb{Q}(x_i)_{i \in I}$ generated by the $CC_X$ where $X$ goes over $\mathcal{H}$.

Let $\ca(B(\bmgamma))$ be the cluster algebra with coefficients whose initial seed is $(B(\bmgamma),\{x_{\bm{i}}\,|\,\bm{i}\in\overline{I}\})$ and $\ca(\tilde{B}(\bmgamma))$ the cluster algebra with coefficients whose initial seed is $(\tilde{B}(\bmgamma),\{x_{i}\,|\,i\in I\})$.

\begin{Prop}\cite[Corollary 3.52]{LD2011}
There is a commutative diagram of inclusions
\[
\begin{tikzcd}
	\ca(B(\bmgamma))\arrow[r,hook]\arrow[d,hook]&\pi(\ca(\tilde{B}(\bmgamma))\arrow[d,hook]\\
	\ca(\ch,G)\arrow[r,hook]\arrow[d,hook]&\pi(\ca(\ch))\arrow[d,hook]\\
	\ca_{0}(\ch,G)\arrow[r,equal]&\pi(\ca_{0}(\ch)).
\end{tikzcd}
\]

\end{Prop}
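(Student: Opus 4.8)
The plan is to reduce everything to a list of subalgebra inclusions inside the fraction field $\mathbb{Q}(x_{\bm i})_{\bm i\in\overline I}$ and then verify each one. First note that $CC_{?}$ takes values in $R:=\mathbb{Q}[x_{r+1},\dots,x_n][x_1^{\pm1},\dots,x_r^{\pm1}]$ and that, by the Laurent phenomenon, every cluster variable of $\ca(\tilde B(\bmgamma))$ and of $\ca(B(\bmgamma))$ lies in the corresponding Laurent ring; hence $\pi$ restricts to an honest ring homomorphism on each of $\ca(\tilde B(\bmgamma))$, $\ca(\ch)$, $\ca_0(\ch)$, and all six algebras in the diagram are subalgebras of $\mathbb{Q}(x_{\bm i})_{\bm i\in\overline I}$ with every displayed arrow the inclusion of one subalgebra in another (the bottom one being an equality). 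In particular the four squares commute automatically, so it remains to check seven containments. I shall use throughout that $P_{?}=\pi\circ CC_{?}$, that $\pi$ carries the subalgebra generated by a set $S$ onto the subalgebra generated by $\pi(S)$, and that (Theorem \ref{Thm: Thm of KellerWu}) $\bmgamma$ is a cluster-tilting object of the Frobenius extriangulated category $\ch$ with $CC_{\bmgamma_i}=x^{[\bmgamma_i]}=x_i$.

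The five ``formal'' containments are immediate. Since a direct summand of a rigid object is rigid, the generators $\{P_{\overline X}\mid\bigoplus_{X'\in\overline X}X'\text{ rigid}\}$ of $\ca(\ch,G)$ form a subfamily of $\{P_{\overline X}\mid X\in\ch\}=\{\pi(CC_X)\mid X\in\ch\}$, which gives both $\ca(\ch,G)\subseteq\pi(\ca(\ch))$ and $\ca(\ch,G)\subseteq\ca_0(\ch,G)$; moreover $\ca_0(\ch,G)$ and $\pi(\ca_0(\ch))$ are generated by the same set $\{P_{\overline X}\mid X\in\ch\}$, hence are equal, and $\ca(\ch)\subseteq\ca_0(\ch)$ (rigid objects being objects) gives $\pi(\ca(\ch))\subseteq\pi(\ca_0(\ch))$. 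Finally $\ca(\tilde B(\bmgamma))\subseteq\ca(\ch)$ is the standard categorification statement: $CC_{\bmgamma_i}=x_i$ is the initial cluster, and by induction on mutations, using the exchange relation of Theorem \ref{Thm: relative cc map} together with the non-degeneracy of $W$ (so that the Gabriel quiver of $\End_{\ch}(\mu_i\bmgamma)$ is $\mu^{FZ}_i(Q)$), every cluster variable of $\ca(\tilde B(\bmgamma))$ equals $CC_X$ for some reachable rigid indecomposable $X$ of $\ch$; applying $\pi$ yields $\pi(\ca(\tilde B(\bmgamma)))\subseteq\pi(\ca(\ch))$.

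It remains to prove $\ca(B(\bmgamma))\subseteq\ca(\ch,G)$, which will also give $\ca(B(\bmgamma))\subseteq\pi(\ca(\tilde B(\bmgamma)))$. I would show, by induction on the number of orbit mutations, that every cluster variable of $\ca(B(\bmgamma))$ equals $P_{\overline X}$ for some reachable $G$-stable cluster-tilting subcategory $\ct$ of $\ch$ (reachable from $\add\bmgamma$ by orbit mutations) and some indecomposable summand $X$ of $\ct$; note that, $\ct$ being $G$-stable and rigid, $\bigoplus_{X'\in\overline X}X'$ lies in $\ct$ and is therefore rigid, so $P_{\overline X}$ is genuinely a generator of $\ca(\ch,G)$. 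For the initial seed this is clear: $x_{\bm i}=\pi(x_i)=\pi(CC_{\bmgamma_i})=P_{\overline{\bmgamma_i}}$ with $\ct=\add(\bmgamma[G])$ (Theorem \ref{Prop: skew-Higgs}). For the inductive step, an orbit mutation $\mu_{\bm i}$, $\bm i=\{i_1,\dots,i_s\}$, of the current reachable $G$-stable cluster-tilting object $T$ is by definition the composite $\mu_{i_s}\circ\cdots\circ\mu_{i_1}(T)$ of $s$ pairwise commuting ordinary mutations, and the result is again reachable, $G$-stable and cluster-tilting (by the Remark identifying orbit mutation of $\bmgamma$ with mutation of $\add(\bmgamma[G])$ in $\ch^{G}$, together with Proposition \ref{Prop:G-stabe and K[G]-stable}). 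Applying the exchange relation of Proposition \ref{thm:3.50}(iii) at each of the $s$ steps, and using the $G$-equivariance of $P_{?}$ (so that the $P$-values of $G$-conjugate objects coincide) together with $G$-loop- and $G$-2-cycle-freeness of $Q$ (so that the $\mu_{i_u}$ commute and no arrows join vertices within $\bm i$, whence each exchange datum involves only summands of $T$ from orbits $\neq\bm i$), the $s$ ordinary exchange relations collapse to a single relation $x_{\bm i}\,x_{\bm i}'=M_{+}+M_{-}$ with $M_{\pm}$ monomials in the $x_{\bm j}$, $\bm j\neq\bm i$; by Lemma \ref{Lem:two matrix relation} and Lemma \ref{Lem:G-index} (equivalently, the $\ind$-part of the $CC$-formula) the monomials $M_{\pm}$ are precisely those prescribed by column $\bm i$ of $B(\mu_{\bm i}(\bmgamma))=\mu^{FZ}_{\bm i}(B(\bmgamma))$ (the Proposition proved just above). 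Hence the new cluster variable is again of the form $P_{\overline X}$ as required, and since $X$ is also a reachable rigid indecomposable of $\ch$, $P_{\overline X}=\pi(CC_X)\in\pi(\ca(\tilde B(\bmgamma)))$. This completes the induction and, with the automatic commutativity noted above, the proof.

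The hard part will be the collapse of the $s$ ordinary exchange relations into one honest skew-symmetrizable exchange relation with the correct exponents in the inductive step; this is exactly where the compatibility $B(\mu_{\bm i}(\bmgamma))=\mu^{FZ}_{\bm i}(B(\bmgamma))$, the index bookkeeping of Lemmas \ref{Lem:two matrix relation} and \ref{Lem:G-index}, and the $G$-loop/$G$-2-cycle-freeness must be combined with care, following \cite[Corollary 3.52]{LD2011}. One also needs the orbit-mutation sequences to remain defined at every stage so that the induction reaches all cluster variables of $\ca(B(\bmgamma))$; this is guaranteed by stability of the admissible $G$-action, which holds, for instance, whenever $\bmgamma$ is concentrated in degree zero (cf.\ the Remark preceding Definition \ref{Def:skew-symmetrizable}).
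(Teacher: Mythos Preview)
The paper does not give a proof of this proposition; it simply records the citation \cite[Corollary~3.52]{LD2011} and moves on. Your reconstruction is essentially the argument one extracts from Demonet's work, transported to the Higgs-category setting, and the logic is sound: all six algebras sit inside a common ambient field, so commutativity is automatic; the ``formal'' inclusions follow from the definitions and $P_{?}=\pi\circ CC_{?}$; and the substantive inclusion $\ca(B(\bmgamma))\subseteq\ca(\ch,G)$ is obtained by tracking orbit mutations and using the $G$-equivariance of $P_{?}$ to collapse the $|\bm i|$ identical exchange relations into the single skew-symmetrizable one, with exponents matched via Lemma~\ref{Lem:two matrix relation} and the compatibility $B(\mu_{\bm i}(\bmgamma))=\mu^{FZ}_{\bm i}(B(\bmgamma))$.

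Two small remarks. First, you write ``the four squares'' but the diagram has only two; this is cosmetic. Second, and more importantly, you are right to isolate stability of the $G$-action as the genuine hypothesis needed for the induction over orbit mutations to reach \emph{every} cluster variable of $\ca(B(\bmgamma))$: the paper only assumes admissibility at this point, and the Remark preceding Definition~\ref{Def:skew-symmetrizable} supplies stability only when $\bmgamma$ is concentrated in degree zero. In Demonet's original setting stability is a theorem \cite[Theorem~3.33]{LD2011}, so his Corollary~3.52 goes through unconditionally there; in the present generality the proposition should be read under that additional hypothesis (as you note), and the paper is tacitly relying on this.
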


\begin{Ex}(Cluster algebras with principal coefficients in the non-simply laced case)
	
Let $B_{n\times n}$ be a skew-symmetrizable matrix such that the the corresponding Cartan matrix has type of $B_{n}$, $C_{n}$, $F_{4}$ or $G_{2}$. Let $\tilde{B}=\begin{bmatrix}
	B\\
	I_{n\times n}
\end{bmatrix}$. Let $\ca^{\mathrm{prin}}_{B}$ be the principal coefficient cluster algebra such that the initial matrix is $\tilde{B}$. There exist an ice quiver $(Q,F)$ and a finite group $G$, c.f. Example \ref{Ex:type A3}, such that (see \cite[Section 4.2]{LD2011})
\begin{itemize}
	\item the quiver $Q$ is acyclic and $G$ acts on $(Q,F)$ such that $Q$ has no arrows between any two vertices of the same orbit. The $G$-invariant non-degenerate potential $W$ on $Q$ is the zero potential. 
	\item the corresponding skew-symmetrizable matrix defined in Definition \ref{Def:skew-symmetrizable} is $\tilde{B}$.
\end{itemize}

By \cite[Proposition 2.22, Theorem 2.23]{Dupont2009} (or \cite[Theorem 4.47]{LD2011}, \cite[Theorem 6.7]{Huang-Li}), the pair $(Q, G)$ is a stable admissible pair. We obtain the Higgs category $\ch(Q,F)$ which equipped with a $G$-action and the canonical cluster-tilting object $\bmgamma(Q,F)$ is $G$-stable. And the map 
$$P_{-}\colon\ch(Q,F)\ra\mathbb{Q}[x_{1}^{\pm},\cdots,x_{2n}^{\pm}]$$ and the $G$-orbit mutations provide an additive categorification of $\ca^{\mathrm{prin}}_{B}$. Notice that the corresponding relative Ginzburg dg algebra is not concentrated in degree zero in this case. We can still categorify cluster algebras with principal coefficients in the non-simply laced case by using a group action on a Frobenius extriangulated category.

\end{Ex}

\begin{Ex}
Let $Q$ be a finite quiver such that the underlying unoriented graph is a Dynkin diagram of type $A$, $D$ or $E$. Let $G$ be
a group acting on $Q$ in such a way that $Q$ has no arrow between any two vertices of the same orbit, c.f, \cite[Section 4.2]{LD2011}. Consider the following inclusion of algebras
$$f\colon kQ\hookrightarrow \mathrm{Aus}(kQ),$$
where $\mathrm{Aus}(kQ)$ is the Auslander algebra of $kQ$. It is clear that $G$ acts on $f$.

There exists an ice quiver with potential $(Q',F',W')$ such that the relative $3$-Calabi--Yau completion $\tilde{f}\colon\Pi_2(kQ)\ra\Pi_{3}(\mathrm{Aus}(kQ),kQ) $ of $f$ is isomorphic to the Ginzburg functor
$$\bm{G}_{rel}\colon\bm\Pi_2(kF')\ra\bmgamma(Q',F',W').$$
Moreover, the group $G$ acts on $(Q',F',W')$, i.e. the Assumption \ref{ass1} is satisfied and $W'$ is non-degenerate, see \cite{GLS2006,Wu2023}. In Example \ref{Ex:typeA5}, the $\mathbb{Z}/2\mathbb{Z}$-invariant non-degenerate potential can be chosen as the alternating sum of the triangles in $Q$.

By \cite[Corollary 8.7]{Wu2023}, the relative Ginzburg algebra $\bmgamma(Q',F',W')$ is concentrated in degree 0. The corresponding Higgs category is equivalent to $\mod(\Pi_{2}(kQ))$, i.e. the module category of the preprojective algebra of $kQ$ \cite[Theorem 8.17]{Wu2023}.
The $G$-stable cluster-tilting object in $\mod(\Pi_{2}(kQ))$ (or the $\mod k[G]$-stable cluster-tilting subcategory of the $G$-equivariant category $\mod(\Pi_{2}(kQ))^{G}$) and the map $P_{-}$ together provide an additive categorification of the cluster structure on the function algebras $\mathbb{C}[N]$, where $N$ is a maximal unipotent subgroup of a simple Lie group in the non-simply-laced case, as studied by Demonet; see \cite{LD20080,LD2008,LD2011}.

\end{Ex}


%
%

\begin{Ex}
Let $\Sigma$ be a topological surface with boundary. Let $M$ be a finite set of marked points on the boundary of $\Sigma$ such that there is at least one marked point on each boundary component of $\Sigma$. Let $P$ be a finite set of marked points in the interior of $\Sigma$, called punctures. Assume that$\colon$
\begin{itemize}
	\item the set of punctures $P$ is non empty;
	\item $(\Sigma,M,P)$ is not a once-punctured monogon.
\end{itemize}

Let $\tau$ be an ideal triangulation of $(\Sigma,M,P)$ of $\Sigma$ (in the sense of \cite[Definition 2.6]{Fomin-Shapiro-Thurston2008}) such that each puncture belongs to a self-folded triangle and such that no triangle shares a side with two self-folded triangles.

There are exactly six different types of triangles in $\tau$ which are not self-folded \cite{Fomin-Shapiro-Thurston2008}\cite[Section 3.1]{Amiot-Plamondon2018}. The corresponding adjacency ice quiver $(Q(\tau),F(\tau))$ is built by gluing blocks corresponding to each kind of triangle.

\[\scalebox{0.8}{
	\begin{tikzpicture}[>=stealth,scale=1]
		
		\node (I0) at (-5,0) {$\color{blue}\square$};
		\node (J0) at (-3,0) {$\color{red}\bullet$};
		\node (K0) at (-4,1) {$\color{blue}\square$}; 
		\draw[thick, ->] (I0)--node [yshift=2mm]{$\alpha$}(J0);
		\draw[thick, ->] (J0)--node [yshift=2mm,xshift=3mm]{$\beta$}(K0);
		\draw[thick, ->] (K0)--node [yshift=2mm,xshift=-2mm]{$\gamma$}(I0);
		\node at (-4,-0.5) {$\mathbf{0}$};

		
		\node (I0) at (-1,0) {$\color{red}\bullet$};
		\node (J0) at (1,0) {$\color{red}\bullet$};
		\node (K0) at (0,1) {$\color{red}\bullet$}; 
		\draw[thick, ->] (I0)--(J0);
		\draw[thick, ->] (J0)--(K0);
		\draw[thick, ->] (K0)--(I0);
		\node at (0,-0.5) {$\mathbf{I}$};

		
		\node (I0) at (3,0) {$\color{red}\bullet$};
		\node (J0) at (5,0) {$\color{red}\bullet$};
		\node (K0) at (4,1) {$\color{red}\bullet$}; 
		\draw[thick, ->] (I0)--(J0);
		\draw[thick, ->] (J0)--(K0);
		\draw[thick, ->] (K0)--(I0);
		\node at (4,-0.5) {$\mathbf{II}$};

		\node (I3) at (-5,-3) {$\color{blue}\square$};
		\node (J3) at (-3,-3) {$\color{red}\bullet$};
		\node (K3) at (-4,-2) {$\bullet$};
		\node (L3) at (-4,-4) {$\bullet$}; 
		\draw[thick, ->] (I3)--node [yshift=2mm]{$a$}(J3);
		\draw[thick, ->] (J3)--node [yshift=2mm,xshift=2mm]{$b$}(K3);
		\draw[thick, ->] (K3)--node [yshift=2mm,xshift=-2mm]{$c$}(I3);
		\draw[thick, ->] (J3)--node [yshift=-2mm,xshift=2mm]{$d$}(L3);
		\draw[thick, ->] (L3)--node [yshift=-2mm,xshift=-2mm]{$e$}(I3);
		\node at (-4,-4.5) {$\mathbf{IIIa}$};
		
		\node (I3) at (3,-3) {$\color{red}\bullet$};
		\node (J3) at (5,-3) {$\color{red}\bullet$};
		\node (K3) at (4,-2) {$\bullet$};
		\node (L3) at (4,-4) {$\bullet$}; 
		\draw[thick, ->] (I3)--(J3);
		\draw[thick, ->] (J3)--(K3);
		\draw[thick, ->] (K3)--(I3);
		\draw[thick, ->] (J3)--(L3);
		\draw[thick, ->] (L3)--(I3);
		\node at (0,-4.5) {$\mathbf{IIIb}$};
		\node (I3) at (-1,-3) {$\color{red}\bullet$};
		\node (J3) at (1,-3) {$\color{blue}\square$};
		\node (K3) at (0,-2) {$\bullet$};
		\node (L3) at (0,-4) {$\bullet$}; 
		\draw[thick, ->] (I3)--(J3);
		\draw[thick, ->] (J3)--(K3);
		\draw[thick, ->] (K3)--(I3);
		\draw[thick, ->] (J3)--(L3);
		\draw[thick, ->] (L3)--(I3);
		\node at (4,-4.5) {$\mathbf{IV}$};
\end{tikzpicture}}\]
Any two triangles can only be glued by identifying two vertices of type $\color{red}\bullet$ and one block cannot be glued to itself. The non-degenerate potential $W(\tau)$ \cite{Labardini2009} associated to $\tau$ is defined as 
$$W(\tau)=\sum_{\text{blocks of type 0,I,II}}\gamma\beta\alpha+\sum_{\text{blocks of type IIIa,IIIb,IV}}(cba+eda).$$
Hence we obtain an ice quiver with potential $(Q(\tau),F(\tau),W(\tau))$.

In \cite[Section 3]{Amiot-Plamondon2018}, Amiot--Plamondon constructed a new unpunctured marked surface $(\tilde{\Sigma},\tilde{M})$ together with a triangulation $\tilde{\tau}$ such that the associated ice quiver with potential $(\tilde{Q},\tilde{F},\tilde{W})$ has a $G=\mathbb{Z}/2\mathbb{Z}$-action. Moreover, the corresponding ice quiver with potential $(\tilde{Q}_{G},\tilde{F}_{G},\tilde{W}_{G})$ defined in Definition \ref{Def: Q_{G} and F_{G}} is right equivalent to $(Q(\tau),F(\tau),W(\tau))$ \cite[Theorem 3.5]{Amiot-Plamondon2018}.

By Theorem \ref{Prop: skew-Higgs}, we have the following equivalence of Frobenius extriangulated categories
$$\ch(Q(\tau),F(\tau),W(\tau))\simeq\ch(\tilde{Q},\tilde{F},\tilde{W})^{\mathbb{Z}/2\mathbb{Z}}.$$ 


\end{Ex}

%


\bibliographystyle{plain}

\end{document}